\DeclarePairedDelimiterX\setc[2]{\{}{\}}{\,#1 \;\delimsize\vert\; #2\,}
\DeclareRobustCommand{\cev}[1]{%
  \mathpalette\do@cev{#1}%
}
\newcommand{\do@cev}[2]{%
  \fix@cev{#1}{+}%
  \reflectbox{$\m@th#1\vec{\reflectbox{$\fix@cev{#1}{-}\m@th#1#2\fix@cev{#1}{+}$}}$}%
  \fix@cev{#1}{-}%
}
\newcommand{\fix@cev}[2]{%
  \ifx#1\displaystyle
    \mkern#23mu
  \else
    \ifx#1\textstyle
      \mkern#23mu
    \else
      \ifx#1\scriptstyle
        \mkern#22mu
      \else
        \mkern#22mu
      \fi
    \fi
  \fi
}
\newtheorem{theorem}{Theorem}[section]
\newtheorem{corollary}[theorem]{Corollary}
\newtheorem{proposition}[theorem]{Proposition}
\newtheorem{lemma}[theorem]{Lemma}
\newtheorem{question}[theorem]{Question}
\theoremstyle{definition}
\newtheorem{remark}[theorem]{Remark}
\newtheorem{example}[theorem]{Example}
\newcommand{\CC}{{\mathbb C}}
\newcommand{\RR}{{\mathbb R}}
\newcommand{\ZZ}{{\mathbb Z}}
\newcommand{\NN}{{\mathbb N}}
\begin{document}

\title[On symplectic capacities and their blind spots]{On symplectic capacities and their blind spots}

\author{Ely Kerman}
\author{Yuanpu Liang}
\address{Department of Mathematics\\
University of Illinois at Urbana-Champaign\\
1409 West Green Street\\
Urbana, IL 61801, USA.}
\thanks{The first named author is supported by a grant from the Simons Foundation}

\date{\today}

\begin{abstract} In this paper we settle three basic questions concerning the 
Gutt-Hutchings capacities from \cite{gh} which are conjecturally equal to the  Ekeland-Hofer capacities from \cite{eh1,eh2}. Our primary result settles a version of the recognition question from \cite{chls}, in the negative. We prove that the Gutt-Hutchings capacities together with the volume, do not constitute a complete set of symplectic invariants for star-shaped domains with smooth boundary. In particular, we construct a smooth family of such domains, all of which have the same Gutt-Hutchings capacities and volume, but no two of which are symplectomorphic. 

We also establish two independence properties of the Gutt-Hutchings capacities. We prove that, even for star-shaped domains with smooth boundaries, these capacities are independent from the volume by constructing a family of star-shaped domains with smooth boundaries,  whose Gutt-Hutchings capacities all agree but whose volumes all differ. We also prove that the capacities are mutually independent  by constructing, for any $j \in \NN$, a family of star-shaped domains, with smooth boundary and the same volume, whose capacities are all equal but the $j^{th}$. 

The constructions underlying these results are not exotic. They are convex and concave toric domains as defined  in \cite{gh}, where the authors also establish beautiful formulae for their capacities.  A key to the progress made here is a significant simplification of these formulae under an additional symmetry assumption. This simplification allows us to identify new blind spots of the Gutt-Hutchings capacities which are used to construct the desired examples. This simplification also yields explicit computations of the Gutt-Hutchings capacities in many new examples. 

\end{abstract}

\maketitle

\section{Introduction}

In the papers \cite{eh1} and \cite{eh2}, Ekeland and Hofer introduce the formal notion of a symplectic capacity  and construct a sequence of capacities, $\{c^{\mathrm{EH}}_k\}_{k \in \NN}$,  for subsets of $(\RR^{2n}, \omega)$. These rich symplectic invariants are defined in terms of the closed orbits of autonomous Hamiltonian flows. They are difficult to compute and their values are still only known completely for model subsets like symplectic ellipsoids and polydisks. In  \cite{gh},  Gutt and Hutchings use $S^1$-equivariant Floer theory to construct another sequence of symplectic capacities, $\{c_k\}_{k \in \NN}$, for star-shaped domains in  $(\RR^{2n}, \omega)$.\footnote{Adopting the convention of \cite{gh}, a {\em domain} here will refer to the closure of an open subset of a Euclidean space.} These are also defined in terms of closed orbits of Hamiltonian flows and are conjectured to be equal to the Ekeland-Hofer capacities. In \cite{gh}, the authors also derive combinatorial formulae for their capacities for both convex and concave toric domains. Here we establish a significant simplification of these formulae in the presence of an additional symmetry. This simplification reveals new blind spots of the capacities which allow us to settle several basic questions concerning their properties.  

\subsection{Results} Motivation for the questions addressed in this paper can already be found in the simplest nontrivial computations of the Gutt-Hutchings capacities. Fix a number $a>1$. The $k$th capacity of the symplectic  ellipsoid 
\begin{equation*}
\label{ }
E(1,a) =\left\{ (z_1, z_2) \in \CC^2 \mid \pi|z_1|^2 +\frac{\pi |z_2|^2}{a} \leq 1 \right\}
\end{equation*} 
is the $k$th  element of the sequence obtained by ordering the set $\{\NN \cup a\NN\}$ in nonincreasing order with repetitions. In {\em meta-code}, 
\begin{equation}
\label{ellipsoid}
c_k(E(1,a)) =  \left(\mathrm{Sort}[\{\NN \cup a\NN\}]\right)[[k]].
\end{equation}
The $k$th capacity of the polydisk
\begin{equation*}
\label{poly}
P(1,a) =\left\{ (z_1, z_2) \in \CC^2 \mid \pi|z_1|^2 \leq 1, \pi |z_2|^2 \leq a \right\}. 
\end{equation*}
 is 
\begin{equation}
\label{polyd}
c_k(P(1,a)) = k.
\end{equation}
Formulas \eqref{ellipsoid} and \eqref{polyd} are established in Section $2$ of \cite{gh}, and agree with the corresponding formulas for the Ekeland-Hofer capacities from Section III of \cite{eh2}.
 
\bigskip

\noindent{\bf How do the $c_k$ develop blind spots at corners?}

\medskip

It follows from \eqref{ellipsoid} that the collection of capacities $\{c_k(E(1,a))\}$ sees the defining parameter $a$. In contrast,  it follows from  \eqref{polyd} that  for $P(1,a)$ the parameter $a$ is invisible to the capacities. It \emph{hides} from them in the corners of the boundary of $P(1,a)$.  In Section \ref{forgetting}, we investigate the formation of these well-known blind spots of capacities at corners by analyzing the capacities of the domains 
\begin{equation}
\label{pellipse}
E_p(1,a) = \left\{ (z_1, z_2) \in \CC^2 \mid  (\pi |z_1|^2)^p + \left(\frac{\pi |z_2|^2}{a}\right)^p \leq 1\right\}.
\end{equation}
As $p$ goes from $1$ to $\infty$, these domains connect the ellipsoid $E(1,p)$ to the polydisk $P(1,a)$.  By refining the formulas from \cite{gh}, we give an explicit description of the process by which the capacities $c_k(E_p(1,a))$ loose sight of $a$ along the way. In short, for a fixed $k$ this blind spot develops in an instant. Above a specific value of $p$  the capacity $c_k(E_p(1,a))$ ceases to depend on $a$ (see Lemma \ref{k}). On the other hand, for each $p \in [1, \infty)$ there are infinitely many $c_k(E_p(1,a))$ which depend nontrivially on $a$ (see Lemma \ref{p}). In other words, the set of capacities $\{c_k(E_p(1,a))\}$ only looses sight of  $a$ at infinity.

\bigskip

\noindent{\bf Can the $c_k$ detect the volume of domains with smooth boundaries?}

\medskip

It is clear from formula \eqref{polyd} that the Gutt-Hutchings capacities do not see volume in general and, even worse, can fail to detect symplectic factors in symplectic product manifolds.\footnote{For $c_1$ this also occurs, over a limited range, in locally trivial symplectic bundles, \cite{ke}, \cite{lu}.}  
One can preclude product domains by restricting to the case of domains with smooth boundary. In this setting, the analysis of the capacities $c_k(E_p(1,a))$ in Section \ref{forgetting} (in particular Lemma \ref{p}) suggests that it might be possible for the set of capacities to detect volume (which is determined by $a$ in that setting). We show that this is a false hope.

\begin{theorem}\label{novolume}
There is a smooth family $V_{\delta}$ of toric star-shaped domains in $\RR^4$ with smooth boundary such that $\delta \mapsto c_k(V_{\delta})$ is constant for all $k$ and $\mathrm{volume}(V_{\delta}) = \mathrm{volume}(V_0) + \delta.$
\end{theorem}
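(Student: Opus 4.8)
The strategy is to realize each $V_\delta$ as a toric domain (concave near the axes, convex further out, or genuinely a convex/concave toric domain, whichever the simplified formulae from Section~3 permit) whose moment-image profile can be deformed in a region that is a "blind spot" of every $c_k$ while still changing the enclosed area. Recall that for a toric domain $X_\Omega$ in $\RR^4$ the symplectic volume is $2\,\mathrm{area}(\Omega)$, where $\Omega\subset\RR^2_{\ge 0}$ is the region whose preimage under the moment map is $X_\Omega$; so it suffices to deform the boundary curve $\partial\Omega$ in a way that adds area $\delta/2$ without altering any $c_k$. The key input is the refined combinatorial formula: after imposing the extra symmetry (the reflection $x_1\leftrightarrow x_2$), the formula for $c_k(X_\Omega)$ from \cite{gh} collapses to an optimization over a much smaller, more rigid combinatorial set, and in particular $c_k(X_\Omega)$ depends only on the values of the support/gauge-type function of $\Omega$ at a sparse set of rational directions (or, dually, on where $\partial\Omega$ meets certain lines through the origin). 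The plan is to choose a symmetric toric domain $V_0$ and then wiggle $\partial V_0$ in an open arc that is disjoint from all the constraint lines relevant to the $c_k$, keeping the domain star-shaped, smooth, and symmetric, and keeping it within the toric class (convex or concave) for which the simplified formula is valid.

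Concretely, I would proceed as follows. First, use the simplified formula (the analogue of Lemma~\ref{p}/Lemma~\ref{k} for symmetric toric domains) to pin down exactly which geometric data of $\Omega$ the numbers $\{c_k(X_\Omega)\}_{k\in\NN}$ actually record — I expect this to be something like the sequence of ``corner heights'' or the values $\delta_\Omega(1,1), \delta_\Omega(1,2), \delta_\Omega(2,1),\dots$ along finitely many or countably many rational rays, together with the behavior near the axes. Second, exhibit a one-parameter family of smooth, star-shaped, symmetric regions $\Omega_\delta$ with $\Omega_0$ fixed, such that (i) all the ray-values / corner data listed in step one are independent of $\delta$, so that $c_k(X_{\Omega_\delta})=c_k(X_{\Omega_0})$ for every $k$ by the simplified formula; (ii) $\mathrm{area}(\Omega_\delta)=\mathrm{area}(\Omega_0)+\delta/2$, e.g.\ by pushing out a symmetric bump on the arc of $\partial\Omega_0$ that lies strictly between two consecutive constraint rays, where there is genuine room to move; and (iii) the family stays inside the convex (or concave) toric class throughout, so that the hypotheses of the simplified formula remain in force. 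Third, set $V_\delta = X_{\Omega_\delta}$, check smoothness of $\partial V_\delta$ (a standard condition on $\partial\Omega_\delta$ meeting the axes nicely — e.g.\ perpendicularly with nonvanishing curvature, cf.\ the treatment of $E_p(1,a)$), and conclude.

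The main obstacle is step two, and within it two competing demands. On one hand, to leave every $c_k$ unchanged one must not perturb $\partial\Omega$ anywhere the capacities ``look'': since $\{c_k\}$ ranges over all $k$, the relevant constraint directions are a dense (countable) set of rational rays, so naively there is no open arc free of them. The resolution — and the real content — is that the simplified symmetric formula shows the capacities only see \emph{finitely many} directions below any given threshold and, crucially, that near one of the axes (or near the diagonal) there is a whole sub-arc of $\partial\Omega$ where increasing the region adds area but does not increase any of the optimizing lattice-point functionals, because those functionals are already governed by a neighboring corner. In other words, one must locate an honest blind spot of the \emph{entire collection} $\{c_k\}$, not just of a single $c_k$, and the symmetry-reduced formula is what makes such a region visible. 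On the other hand, one must keep the domain concave-or-convex toric (so the formula applies) and $C^\infty$ up to the boundary; balancing ``enough freedom to add area'' against ``rigid enough that the formula doesn't change'' is the delicate point, and I expect the construction to live near a flat piece or a corner of a polydisk-like $\Omega_0$, deformed outward into a smooth convex bump of controlled support.
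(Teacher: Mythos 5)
Your high-level plan matches the paper's: symmetrize, invoke the collapsed capacity formula, locate a ``blind spot'' on $\partial\Omega$, and push out a bump there while preserving the (strictly) convex or concave class and smoothness. The paper executes this exactly via Corollary~\ref{f}: take a strictly convex symmetric $\Omega_f$ with $f \in \widehat{\mathcal{V}}(1)$ and $f'(0)\in(-\tfrac12,0)$; then $c_1$ is carried by the endpoint $(0,\lambda)$, each odd $c_k$ ($k>1$) by the unique point $x_k$ with $f'(x_k)=-\tfrac{k-1}{k+1}$, and all even $c_k$ by the fixed point $x(f)$. The carrier set in $[0,\lambda]$ is therefore a discrete sequence accumulating only at $x(f)$, so one picks an odd $j$, supports a bump $\eta_j\geq 0$ in the open gap $(x_j,x_{j+2})$ with $\int\eta_j=\tfrac12$, symmetrizes it by the extension $\tilde\eta_j$ supported in $(f(x_{j+2}),f(x_j))$, and sets $V_\delta=X_{f+\delta(\eta_j+\tilde\eta_j)}$.

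However, two of your guiding heuristics would steer the construction astray, and are worth flagging. First, you suggest the blind spot might live ``near the diagonal'': this is precisely where it cannot live, since the $x_k$ accumulate at $x(f)$ (where $\partial\Omega$ meets the diagonal), so every neighborhood of the diagonal meets infinitely many carrier points. Second, you suggest starting from ``a flat piece or a corner of a polydisk-like $\Omega_0$.'' For $\Omega_0$ polydisk-like, all the constraint functionals $\max\langle V(k,2),\cdot\rangle$ are realized at the \emph{single} outer corner $(1,1)$, so the carriers coincide and there is no gap to wiggle in: any outward deformation near that corner changes $c_k$ for infinitely many $k$, and any inward rounding decreases all of them. (A corner off the axes also makes $\partial X_{\Omega_0}$ non-smooth, violating the hypothesis.) The ``density of rational rays'' worry is likewise a red herring after symmetrization: the collapsed formula uses only the directions $V(k,2)$, whose slopes $\lfloor k/2\rfloor/\lceil k/2\rceil$ are not dense but monotone in each parity class with unique limit the diagonal direction. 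The real enabling fact is not ``finitely many directions below a threshold'' but rather that strict convexity makes the map $k\mapsto x_k$ injective and discrete away from $x(f)$, and this is what Corollary~\ref{f} gives you. If you replace your polydisk/diagonal intuition with this picture of a discrete carrier sequence converging to the fixed point, the rest of your plan goes through essentially as the paper does it.
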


\bigskip

\noindent{\bf Are the $c_k$ mutually independent?} 

\medskip

Within the symplectic category  of ellipsoids or  polydisks there are no variations which vary one capacity, or even finitely many capacities, and leave the others fixed. It is not immediately clear whether this interdependence  is a general feature of the Gutt-Hutchings capacities or an exception corresponding to the simplicity of the closed characteristics on the boundaries of ellipsoids and polydisks. For a generic star-shaped domain $U$ with a smooth boundary, $\partial U$,  there are infinitely many geometrically distinct closed characteristics on $\partial U$ and they are all nondegenerate. In this generic setting, each $c_k(U)$ is equal to a multiple of the symplectic action of one of the closed characteristics, \cite{gh}. It might be that the $c_k(U)$ always correspond to multiples of the actions of only finitely many characteristics, and that any variation of $U$ that alters the actions of one of these characteristics must then alter infinitely many capacities. Our second result shows that this is not the case. 

\begin{theorem}\label{mutual}
For every  $j \in \NN$, there is a smooth family $V^j_{\delta}$ of toric star-shaped domains in $\RR^4$ with smooth boundary such that $\delta \mapsto c_k(V^j_{\delta})$ is constant for all $k \neq j$, and $ c_j(V^j_{\delta}) = c_j(V^j_0)+\delta$.
\end{theorem}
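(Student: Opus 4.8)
The plan is to build the family $V^j_\delta$ as convex (or concave) toric domains in $\RR^4$ with the extra symmetry—presumably invariance under interchanging $z_1$ and $z_2$—under which the Gutt--Hutchings formulae of \cite{gh} simplify to the form promised in the introduction. Recall that for a convex toric domain $X_\Omega$ in $\RR^4$, $c_k(X_\Omega)$ is obtained by minimizing a linear functional determined by a lattice point $(a,b)\in\ZZ^2_{\ge 0}$ with $a+b=k$ over the moment-map region $\Omega$; the symmetrized version should collapse this to a one-parameter optimization in which only the ``weight'' $k$ and the geometry of $\Omega$ near the diagonal enter. The first step is therefore to write down this simplified formula carefully and to record, for a given profile curve $\partial\Omega$, exactly which lattice direction $(a_k,b_k)$ is the minimizer realizing $c_k$ for each $k$; by symmetry we may take $a_k\le b_k$, and the minimizer will lie on or near the diagonal.

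Next I would engineer a localized bump. The idea is that $c_j$ is realized along a specific normal direction to $\partial\Omega$—equivalently, it reads off the support-function value of $\Omega$ in that direction—while every other $c_k$, $k\ne j$, is realized in a direction bounded away from it. So one takes a fixed base domain $\Omega_0$, smooth and star-shaped, for which the minimizers for distinct $k$ occupy disjoint normal cones, and then perturbs $\Omega_0$ only in an angular window around the direction responsible for $c_j$, pushing the boundary outward by an amount tuned so that $c_j$ increases by exactly $\delta$ while the support function in all the other relevant directions is untouched. Concretely, since the toric domain is determined by its boundary profile in the positive quadrant of the moment image, one inserts a small outward-bulging arc supported in a tiny neighborhood of the point where the $j$th normal is attained, keeping the curve smooth, keeping the domain star-shaped (convexity or concavity is preserved if the bump is shallow enough), and keeping $\delta\mapsto\partial\Omega_\delta$ smooth. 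One must check two things: (i) for $k\ne j$ the minimizing direction and its value are genuinely unchanged—this is where the disjointness of normal cones, plus a continuity/stability argument showing the minimizer does not jump into the perturbed window for small $\delta$, is used; and (ii) $c_j(V^j_\delta)=c_j(V^j_0)+\delta$ on the nose, which one arranges by choosing the bump's height profile to be linear in $\delta$ in the relevant support-function direction.

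The main obstacle I anticipate is controlling \emph{all} capacities simultaneously: the Gutt--Hutchings minimization for $c_k$ ranges over all admissible lattice points with total weight $k$, and a naive bump that raises $c_j$ risks also raising, say, $c_{2j}$ (via the ``doubled'' lattice vector) or some $c_k$ with $k$ slightly larger than $j$ whose optimizer migrates into the perturbed angular window. Handling this requires a quantitative separation lemma: one needs the base profile $\Omega_0$ chosen so that, for every $k$, the value $c_k(\Omega_0)$ is attained \emph{strictly} (unique minimizing direction, with a definite gap to the second-best), and so that the directions $\{\theta_k\}_{k\in\NN}$ accumulate only at the two diagonal endpoints, leaving a genuine angular gap around $\theta_j$ that contains no $\theta_k$ for $k\ne j$. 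This is exactly the kind of ``blind spot'' structure the simplified formula is said to expose: the perturbed region is a blind spot for every $c_k$ with $k\ne j$. With such an $\Omega_0$ in hand, one makes the support of the bump smaller than that gap, and the required size $\delta$ small enough that no optimizer crosses into it, and the verification of both (i) and (ii) becomes a routine estimate on the simplified formula.

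Finally, one records that $V^j_\delta$ is toric, star-shaped, has smooth boundary for all $\delta$ in the parameter interval, and that the family is smooth in $\delta$; the volume will in general change, but that is allowed here (unlike in Theorem \ref{novolume}), so no further normalization is needed. If one additionally wants the construction to dovetail with Theorem \ref{novolume}, one can note that compensating bumps could be added in the other symmetric directions to fix the volume, but this is not required by the statement of Theorem \ref{mutual}.
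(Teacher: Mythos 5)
Your high-level strategy is the same as the paper's: pass to the collapsed formulas for symmetric convex/concave toric domains, observe that each $c_k$ is read off from the $1$-jet of the boundary profile at a single distinguished point, and insert a localized bump near the point responsible for $c_j$ while staying clear of the points responsible for the other $c_k$. That much is right and is exactly the mechanism the paper uses.

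However, there is a genuine gap in how you set up the ``separation lemma,'' and it touches the one structural fact that forces the paper to split the proof into two cases. For a \emph{symmetric convex} domain $X_f$ (Corollary \ref{f}) the odd-index capacities are realized at the distinct points $x_3<x_5<\dots\to x(f)$, but \emph{every even-index} capacity is realized at the single diagonal point $(x(f),x(f))$, since $c_k(X_f)=k\,x(f)$ for all even $k$. Dually, for a symmetric concave domain $X_h$ (Corollary \ref{h}) the even-index capacities sit at distinct points $\check{x}_k$, but \emph{every odd-index} capacity is $(k+1)\,x(h)$ and sits at the single diagonal point. Consequently, your requirement that the base profile have, ``for every $k$,'' a unique minimizing direction separated by a definite gap from all other $\theta_{k'}$ is not attainable: half of the $\theta_k$ coincide. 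A bump placed near the diagonal point to raise one even $c_j$ of a convex $X_f$ would unavoidably raise all even capacities; your proposal never explains how to avoid this or when to switch to the concave case. The resolution (and what the paper does) is that the choice of convex vs.\ concave must be keyed to the parity of $j$: concave domains and bumps at $\check{x}_j$ for even $j$, convex domains and bumps at $x_j$ for odd $j$ (with a small separate argument for $j=1$, where $c_1(X_f)=\lambda$ is determined by the boundary value rather than an interior critical point). In each case the ``other-parity'' capacities are protected automatically because they are all carried by the diagonal point, which lies outside the bump's support.

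A secondary point worth flagging: a bump supported near $x_j$ alone destroys the symmetry $f^{-1}=f$ on which the collapsed formulas rest. One must adjoin the mirror perturbation $\tilde\beta_j$ supported near $f(x_j)$ so that $f+\delta(\beta_j+\tilde\beta_j)$ remains in $\widehat{\mathcal{V}}(\lambda)$ (or $\widehat{\mathcal{C}}(\lambda)$); the paper's Section \ref{pert} makes this precise and also records that the mirror bump contributes the same integral, which matters for Theorem \ref{novolume} even if not for the present theorem. To get the exact normalization $c_j(V_\delta^j)=c_j(V_0^j)+\delta$ you also need the bump to be \emph{constant} (not merely smooth) near $x_j$ so that the defining equation $f'=-\tfrac{k-1}{k+1}$ (resp.\ $h'=-\tfrac{k+2}{k}$) has the same solution after perturbation and only the function value shifts; your proposal of ``choosing the bump's height profile to be linear in $\delta$'' is in the right spirit but does not isolate this derivative-preservation requirement.
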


\bigskip

\noindent{\bf If $U \subset \RR^{2n}$ is a star-shaped domain with smooth boundary, do the capacities $c_k(U)$   together with the volume of $U$ determine it up to symplectomorphism?} 

\medskip

This is a version of the {\em Recognition Question} from \cite{chls} (see  Question 2  in \S 3.6 of \cite{chls}).  Without the assumption that the boundary is smooth, the answer is known to be negative. For example, it follows from \cite{gh} that $c_k(P(1,2,6)) =c_k(P(1,3,4))=k$ for all $k \in \NN$. These polydisks also have the same volume. However, the rigidity of closed symplectic polydisks, established by L. Bates in \cite{bates} \footnote{It is still not known whether this rigidity holds for open polydisks.}, implies that they are not symplectomorphic.

Here we prove that the answer is still negative even under the additional assumption of smooth boundaries. Our main theorem is the following.
\begin{theorem}
\label{blind}
There is a smooth family $V_{\delta}$ of toric star-shaped domains in $\RR^4$ with smooth boundary all of which have the same Gutt-Hutching capacities and volume, but no two of which are symplectomorphic.
\end{theorem}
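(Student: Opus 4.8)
The plan is to build the family $V_\delta$ as a one-parameter family of toric domains in $\RR^4$ that are \emph{both} convex and concave toric (so that the Gutt–Hutchings formulae from \cite{gh} apply, and in their simplified form under the extra symmetry described in the abstract), chosen so that the relevant combinatorial minimization problems never ``see'' the parameter $\delta$. Concretely, I would work with a toric domain $X_\Omega$ whose moment-map image $\Omega$ is the region under the graph of a convex, decreasing function that is symmetric under reflection across the diagonal; the symmetry is the hypothesis that collapses the Gutt–Hutchings formula to a manageable one-variable optimization (the ``significant simplification'' advertised above). The deformation $V_\delta$ should alter the shape of $\partial\Omega$ only in a region that is a genuine blind spot — that is, a region of the boundary that never contributes to any of the minima defining $c_k$, nor to the volume — while simultaneously compensating elsewhere so that the volume stays fixed. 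Since Theorem \ref{novolume} already produces a family with all $c_k$ constant but volume changing \emph{linearly} in $\delta$, the natural strategy is to superimpose on it a second, independent deformation that changes volume by $-\delta$ without touching any capacity; adding the two gives all $c_k$ constant and volume constant.

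The key steps, in order, would be: (1) recall/restate the simplified Gutt–Hutchings formula for symmetric convex-and-concave toric domains, identifying precisely which boundary data enter each $c_k$ — these should be a discrete set of ``corner-type'' quantities (lattice-point counts or support-function values), so there is a lot of boundary one can wiggle freely; (2) identify an explicit open region $R$ of the boundary of $\Omega$ that is invisible to every $c_k$, by checking that the minimizers in the formula are always attained strictly away from $R$ (this uses the monotonicity/convexity built into the construction); (3) define a two-parameter smooth deformation supported near $R$, one parameter $\delta$ recording the capacity-preserving shape change and the volume change it induces, and a second bump somewhere else in $R$ (or a rescaling within $R$) that restores the volume — then reparametrize to a single parameter; (4) verify smoothness of $\partial V_\delta$ and that $V_\delta$ remains a star-shaped toric domain for $\delta$ in an interval; (5) prove no two members are symplectomorphic. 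For step (5) the cleanest route, following the structure of the excerpt, is to show that the family still carries a symplectic invariant that \emph{does} vary with $\delta$ — the natural candidate being some Ekeland–Hofer–type or ECH-type quantity, or more elementarily a Gromov-width / embedded-ball obstruction, or the ``hidden'' parameter $a$ in an $E_p$-like model — so that distinctness is certified by an invariant outside the Gutt–Hutchings-plus-volume package.

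The main obstacle I anticipate is step (5): ruling out symplectomorphisms. Having all Gutt–Hutchings capacities and the volume agree means all the ``easy'' invariants fail to separate the domains, so one must exhibit something genuinely finer. In the polydisk precedent cited in the excerpt, the separation came from Bates's rigidity of closed polydisks — a special structural rigidity not available for smooth-boundary toric domains. So the real work is to engineer the family so that it retains \emph{some} computable distinguishing feature: e.g. arrange that the domains contain/are contained in model ellipsoids or polydisks in a $\delta$-dependent optimal way (using ECH capacities or the Hutchings/McDuff ellipsoid-embedding machinery, which are \emph{not} among the $c_k$), or that they have $\delta$-dependent higher symplectic capacities (Hutchings' $\widetilde{c}_k$, or the Siegel–Hutchings higher-order invariants), or that the Reeb dynamics on $\partial V_\delta$ has a $\delta$-dependent invariant such as the action spectrum beyond the part captured by $\{c_k\}$. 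I would aim to make the deformation change the \emph{full action spectrum} (or the ECH spectrum) of the boundary in a way the Gutt–Hutchings subsequence happens to miss, and then cite the corresponding rigidity/computation result to conclude non-symplectomorphism.

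A secondary technical point worth flagging: one must check that the blind-spot deformation genuinely preserves \emph{all} $c_k$ simultaneously, not just finitely many — so step (2) has to control the minimizers uniformly in $k$, which is where the symmetric simplified formula is essential, since it reduces each $c_k$ to an optimization whose feasible region and objective are governed by a single profile function, and one can read off directly that perturbations of that function off a fixed compact subset leave every $c_k$ untouched.
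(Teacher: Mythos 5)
Your overall strategy is right — deform a symmetric toric domain in a region the Gutt--Hutchings minimizers never visit, keep the volume fixed, and then use a finer invariant (you correctly single out ECH capacities) to separate the family. That matches the paper's plan. But there is a genuine gap at the heart of step (5), and a smaller conceptual slip at step (1).

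The slip first: a toric domain $X_\Omega$ cannot usefully be ``both convex and concave'' in the sense of \cite{gh} (except in degenerate cases), and the paper's proof of Theorem~\ref{blind} works with a symmetric \emph{concave} domain $X_h$, $h\in\widehat{\mathcal{C}}(\lambda)$, precisely because the ECH formula from \cite{choi} is for concave domains. Your volume-preserving mechanism (superimpose the Theorem~\ref{novolume} deformation with a second, compensating bump) would work, but the paper does it in one stroke with a mean-zero perturbation $\rho$ (so $\int\rho=0$) and its symmetric partner $\tilde\rho$, using \eqref{sameint}; this is cleaner than a two-parameter reparametrization.

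The real gap is that step (5) is left as an intention (``I would aim to make the deformation change the full action spectrum in a way the Gutt--Hutchings subsequence happens to miss''), without identifying the mechanism that makes this possible, let alone carrying out the computation. The key structural observation in the paper is a \emph{mismatch} between the boundary points seen by $c_k$ and those seen by $c_k^{\mathrm{ECH}}$. By Corollary~\ref{h}, the GH capacities of $X_h$ are governed only by $x(h)$ (where $h'=-1$) and the points $\check{x}_k$ (where $h'=-(k+2)/k$, i.e.\ slopes in $(-2,-1]$). By contrast, the ECH weight expansion of the concave region $\Omega_h$ (Lemma~\ref{hhh}) requires the values of $h$ at points $y_w$ where $h'$ hits \emph{every} negative rational obtained from the concave-subdivision tree, in particular $y_{22}$ where $h'(y_{22})=-3$, which lies outside the GH-visible slope range. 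The paper places $\rho$ in a small interval around $y_{22}$, away from all $\check{x}_k$ and from $x(h)$, takes $\int\rho=0$, and then, for an explicit $h$ (the $\epsilon$-shift of the Lagrangian-bidisk curve, which is needed so $\partial X_h$ is smooth and so the relevant weights can be ordered uniformly in $\epsilon,\delta$), computes that $c^{\mathrm{ECH}}_9(X_{h+\delta(\rho+\tilde\rho)}) = c^{\mathrm{ECH}}_9(X_h)+\delta$ (Lemma~\ref{capcomp}). Without pinning down (a) that such a GH-blind but ECH-visible region exists, and (b) an explicit $h$ and $k$ for which the ECH formula's maximum is attained on a term involving the perturbed weight, your proposal does not close the argument; distinguishing the domains is exactly the hard step and cannot be delegated to ``cite the corresponding rigidity/computation result,'' since no off-the-shelf result applies without this computation.

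Secondary note on your step (2): for GH capacities of a symmetric concave graph-domain in $\RR^4$, the ``blind spot'' is not just some open region you check case by case; Corollary~\ref{h} characterizes it completely as the complement of $\{x(h)\}\cup\{\check{x}_k\}_{k\ \mathrm{even}}$ (and its mirror). So the uniformity-in-$k$ worry you raise is handled automatically by the simplified formula; that part of your plan is sound.
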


These domains are distinguished using the ECH capacities constructed by Hutchings in \cite{ech},  using the formulas for them established in \cite{choi}.

\subsection{Methods and Ideas}
The examples underlying the proofs of  Theorems \ref{novolume} , \ref{mutual} and \ref{blind} are all either convex or concave toric domains as defined by Gutt and Hutchings in \cite{gh}.  The key  tool developed here is a significant simplification of the combinatorial formulas  from \cite{gh} for the capacities of such regions, which holds in the presence of addition symmetry. As described below, these simplifications reveal new  blind spots of the capacities which are exploited to construct the relevant examples. They also reveal some new representation patterns for the capacities, and allow for new explicit computations.

\medskip

\noindent {\bf Symmetry and collapse.} We first recall the formulas for the capacities of convex and concave toric domains from \cite{gh}. Let $\mu \colon \CC^n=\RR^{2n} \to \RR_{\geq 0}^n$ be the standard moment map defined by
$$\mu(z_1,\dots, z_n) = (\pi|z_1|^2,\dots, \pi |z_n|^2).$$ To a domain
$\Omega$ in $\RR_{\geq 0}^n$ we associate the toric domain $X_{\Omega} = \mu^{-1}(\Omega)$ in $\RR^{2n}$. Following, \cite{gh}, the domain  $X_{\Omega}$ is said to be {\em convex} if 
$$
\hat{\Omega} = \{(x_1, \dots, x_n) \in \RR^n \mid (|x_1|, \dots, |x_n|) \in \Omega\}
$$
is compact and convex. The domain $X_{\Omega}$ is said to be {\em concave} if
$\Omega$ is compact and its complement, in $\RR_{\geq 0}^n$, is convex.

%

\begin{theorem}[\cite{gh}, Theorem 1.6 and Theorem 1.14]
\label{many}
If  $X_{\Omega} \subset \RR^{2n}$ is convex, then for any $k$ in $\NN$ we have 
\begin{equation*}
\label{ }
c_k (X_{\Omega}) = \min \setc*{ \|(v_1,\dots  v_n)\|_{\Omega}}{ v_1, \dots, v_n \in \{0\} \cup \mathbb{N}, \, \sum_{i=1}^n v_i=k}
\end{equation*}
where
\begin{equation*}
\label{ }
 \|(v_1,\dots  v_n)\|_{\Omega} =\max\left\{\langle v,w\rangle \mid w \in \Omega\right\}.
\end{equation*}

If  $X_{\Omega} \subset \RR^{2n}$ is concave,  then for any $k$ in $\NN$ we have 
\begin{equation*}
\label{ }
c_k (X_{\Omega}) = \max \setc*{ [(v_1,\dots  v_n)]_{\Omega}}{ v_1, \dots, v_n \in  \mathbb{N}, \, \sum_{i=1}^n v_i=k+n-1}
\end{equation*}
where
\begin{equation*}
\label{ }
[(v_1,\dots  v_n)]_{\Omega} =\min\left\{\langle v,w\rangle \mid w \in \bar{\partial}_+ \Omega\right\} 
\end{equation*}
and $\bar{\partial}_+\Omega$ is the closure of $\{w \in \partial \Omega \mid w \in \RR^n_{>0}\}$.
\end{theorem}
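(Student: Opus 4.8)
The plan is to reconstruct the argument of \cite{gh}. Both formulas are, at bottom, a computation of the filtered positive $S^1$-equivariant symplectic homology of $X_\Omega$: by the recipe of \cite{gh}, $c_k(X_\Omega)$ is the smallest action level $L$ at which a certain class, in the degree appropriate to $k$, becomes visible in the part of $SH^{S^1,+}(X_\Omega)$ of filtration $\leq L$. So the task is to compute this homology, with its action filtration and its $S^1$-equivariant structure, in the convex and in the concave case. I would begin by reducing to a convenient dense class: $c_k$ and the two combinatorial quantities on the right are all monotone under inclusion of toric domains, continuous under the rescalings $\Omega \mapsto \lambda\Omega$, and continuous in $\Omega$ for the Hausdorff topology, so it is enough to treat those $X_\Omega$ whose boundary, after an arbitrarily small star-shaped perturbation of the given combinatorial type, carries a nondegenerate Reeb flow with simple action spectrum, and then pass to the limit.

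For such an $X_\Omega$ I would analyse the Reeb flow on $\partial X_\Omega$. Over the smooth locus of $\partial\Omega$ the Reeb field is tangent to the torus fibres of the moment map $\mu$ and points along the outward normal direction of $\partial\Omega$, so its closed orbits come in Morse--Bott families: a torus $T_p$ over each $p \in \partial\Omega$ at which the outward normal $q(p)$ is a primitive integer vector, together with the (lower-dimensional, recursively toric) orbit families living in the coordinate subspaces. The numerical coincidence that makes everything run is that the $m$-fold iterate of the simple orbit carried by $T_p$ has symplectic action exactly $m\langle q(p),p\rangle$; since $p$ maximizes $\langle q(p),\,\cdot\,\rangle$ over $\Omega$, this equals $m\|q(p)\|_\Omega = \|m\,q(p)\|_\Omega$, and as $p$ and $m$ vary these actions run through precisely $\{\|v\|_\Omega : v \in \ZZ^n_{\geq 0}\}$ in the convex case, and dually through $\{[v]_\Omega\}$ in the concave case, the relevant orbit families now lying over the interface between $\Omega$ and its convex complement. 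A Morse--Bott computation of $SH^{S^1,+}(X_\Omega)$ --- or a computation after an $S^1$-invariant nondegenerate perturbation localized near the $T_p$ --- then produces generators whose actions are these numbers; the key point, exactly as in the ellipsoid case of \cite{gh}, is that the Conley--Zehnder index estimates coming from convexity (resp.\ concavity) make the part of the chain complex relevant to $c_k$ lacunary, so the $S^1$-equivariant differential vanishes there. Applying the recipe of \cite{gh} to the resulting homology then yields $c_k(X_\Omega) = \|v\|_\Omega$ for some $v$ with $\sum v_i = k$ in the convex case, and $c_k(X_\Omega) = [v]_\Omega$ for some $v$ with $\sum v_i = k+n-1$ in the concave case, the shift by $n-1$ reflecting the index normalization appropriate to a concave rather than convex contact boundary --- and already dictated by consistency on ellipsoids, which are toric domains of both types.

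It remains to see that these values are the extremal ones. In the convex case the inequality $c_k(X_\Omega) \leq \min\{\|v\|_\Omega : \sum v_i = k\}$ also has a clean independent proof from monotonicity and the ellipsoid case of the theorem (established first in \cite{gh}, and recalled for $n=2$ in \eqref{ellipsoid}): convexity of $\hat\Omega$ gives $\Omega \subseteq \{w \in \RR^n_{\geq 0} : \langle v,w\rangle \leq \|v\|_\Omega\}$, so intersecting with the box $\{0 \leq w_i \leq \max_\Omega w_i\}$ and relaxing the constraint by a factor $1+\varepsilon$ yields $X_\Omega \subseteq E(a)$ with $a_i = (1+\varepsilon)\|v\|_\Omega/v_i$ in the coordinates where $v_i > 0$ and $a_i$ as large as desired in the others; evaluating the ellipsoid formula at the weight vector $v$ itself gives $c_k(E(a)) \leq (1+\varepsilon)\|v\|_\Omega$, whence $c_k(X_\Omega) \leq \|v\|_\Omega$ after letting $\varepsilon \to 0$, and one then minimizes over $v$. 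Together with the previous paragraph, which places $c_k(X_\Omega)$ in the set $\{\|v\|_\Omega : \sum v_i = k\}$, this forces equality. In the concave case the companion inequality comes from comparing $X_\Omega$ from the inside with disjoint unions of ellipsoids --- which exhaust its capacities, by the ball/ellipsoid decomposition of a concave toric domain --- together with the corresponding computation of $c_k$ of such disjoint unions, itself of the same flavour.

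The step I expect to be the main obstacle is the Floer-theoretic computation of the second paragraph: the Reeb flow on a toric boundary is maximally degenerate (whole tori of closed orbits), so one must run the Morse--Bott (or $S^1$-invariant-perturbation) machinery carefully, compute the $S^1$-equivariant Conley--Zehnder indices of all iterates of all orbit families, and verify the lacunary pattern that kills the differential in the relevant degrees --- after which identifying which lattice vector carries the distinguished class for $c_k$ is essentially bookkeeping. The concave case adds the index/degree shift and the justification of the inner approximation by disjoint unions of ellipsoids, but these parallel the convex argument rather than posing genuinely new difficulties.
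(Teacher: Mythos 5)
Note first that this theorem is cited from \cite{gh} (their Theorems 1.6 and 1.14) and is stated in the present paper without proof; there is therefore no ``paper's own proof'' to compare your attempt against, and what you have written is, as you say, a reconstruction of the argument in \cite{gh}.

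As a high-level outline it is broadly on target: reduce to a dense nondegenerate class; identify the Reeb orbits on $\partial X_\Omega$ as Morse--Bott tori over the rational-normal points $p$ of $\bar{\partial}_+\Omega$ with actions $m\langle q(p),p\rangle = \|m\,q(p)\|_\Omega$ in the convex case (resp.\ $[m\,q(p)]_\Omega$ in the concave case); bound $c_k$ on one side by monotonicity against inclusion into, or of, ellipsoids, using the explicit ellipsoid formula; and pin down the remaining inequality via the Floer-theoretic definition of $c_k$. Your convex upper bound, via $\Omega\subseteq\{w:\langle v,w\rangle\le\|v\|_\Omega\}$ and the ellipsoid $E(\|v\|_\Omega/v_1,\dots)$, is correct and is indeed the easy half.

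There are, however, two gaps. The serious one is that the remaining inequality --- $c_k\ge\min\|v\|_\Omega$ for convex, $c_k\le\max[v]_\Omega$ for concave --- contains essentially all of the content, and your account of it (a Morse--Bott or perturbed computation of $SH^{S^1,+}$, Conley--Zehnder index estimates, a lacunarity argument killing the differential, and ``bookkeeping'') is a to-do list rather than an argument; you flag this yourself, but none of its ingredients is actually carried out, and it is in any case not clear that a full Morse--Bott chain-level computation is the route of \cite{gh}, whose proof as I read it leans instead on soft axiomatic properties of the $c_k$ (monotonicity, conformality, continuity, and the Reeb-orbit property forcing $c_k$ to equal the action of an orbit of controlled Conley--Zehnder index) combined with an action--index inequality on the toric boundary. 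The second, minor, issue is that the concave companion inequality $c_k\ge\max[v]_\Omega$ does not need any ball/ellipsoid decomposition: for $v\in\NN^n$ with $\sum v_i=k+n-1$, the simplex $\{w\in\RR^n_{\ge0}:\langle v,w\rangle\le[v]_\Omega\}$ is contained in $\Omega$ because $\Omega^c$ is convex and supported from below by the hyperplane $\langle v,\cdot\rangle=[v]_\Omega$, so the single ellipsoid $E([v]_\Omega/v_1,\dots,[v]_\Omega/v_n)\subseteq X_\Omega$ already has $c_k$ exactly $[v]_\Omega$ and monotonicity finishes. Invoking ball decompositions would require the separate (and nontrivial) justification that they exhaust the Gutt--Hutchings, rather than the ECH, capacities.
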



\begin{remark} In principle, to compute $c_k(X_{\Omega})$ with the formulae of Theorem \ref{many}, one must analyze ${k+n-1}\choose{n-1}$ optimization problems on $\Omega$ in the convex case, and ${k+n-2}\choose{n-1}$ such problems in the concave case.\footnote{These are the number of weak compositions of $k$, and the number of compositions of $k+n-1$, respectively.} 
\end{remark}

We say that  $\Omega \subset \RR^n_{\geq 0}$ is {\em symmetric} if
$$
(x_1, \dots, x_n) \in \Omega  \implies (x_{\sigma(1)}, \dots, x_{\sigma(n)}) \in \Omega
$$
for any permutation $\sigma \in S_n$. The following  result asserts that for symmetric convex domains $\Omega$ the formula for $c_k(X_{\Omega})$ from \cite{gh} collapses to a single optimization problem on $\bar{\partial}_+ \Omega$.

\begin{theorem}
\label{balance}
If $\Omega \subset \RR^n_{\geq 0}$ is symmetric and $X_{\Omega}$ is convex, then 
\begin{equation*}
\label{ }
c_k(X_{\Omega}) =\max\left\{\langle V(k,n),w\rangle \mid w \in \bar{\partial}_+ \Omega\right\}.
\end{equation*}
where 
\begin{equation*}
\label{ }
V(k,n) = \left( \left\lfloor \frac{k}{n} \right\rfloor, \dots,  \left\lfloor \frac{k}{n}\right \rfloor,  \underbrace{\left\lceil \frac{k}{n}\right\rceil , \dots, \left\lceil \frac{k}{n}\right\rceil}_{(k \mod n)-\mathrm{times}} \right).
\end{equation*}
\end{theorem}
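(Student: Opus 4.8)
The plan is to start from the Gutt--Hutchings formula in Theorem~\ref{many} for convex $X_\Omega$, namely
\begin{equation*}
c_k(X_\Omega) = \min\left\{ \|(v_1,\dots,v_n)\|_\Omega \ \middle|\ v_i \in \{0\}\cup\NN,\ \textstyle\sum_i v_i = k\right\},
\end{equation*}
and to show that among all weak compositions $v$ of $k$ the minimum is always attained at the ``balanced'' one $V(k,n)$. The key structural fact I would isolate first is a convexity/symmetry property of the support functional $v \mapsto \|v\|_\Omega = \max\{\langle v,w\rangle : w\in\Omega\}$: since $\Omega$ is symmetric, $\|\cdot\|_\Omega$ is invariant under permutations of coordinates, and since it is a support function it is convex and positively homogeneous. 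Together these give that $\|\cdot\|_\Omega$ is \emph{Schur-convex} on $\RR^n_{\geq 0}$ (a symmetric convex function is Schur-convex). Hence if $v'$ majorizes $v$ (in the sense of majorization of vectors with equal coordinate sums), then $\|v'\|_\Omega \geq \|v\|_\Omega$.

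The second step is the purely combinatorial observation that $V(k,n)$ is the unique minimal element, with respect to the majorization order, of the set of weak compositions of $k$ into $n$ nonnegative integer parts: every integer vector with nonnegative entries summing to $k$ majorizes $V(k,n)$, because $V(k,n)$ is the ``most spread out'' such vector — its largest part is $\lceil k/n\rceil$ and its smallest is $\lfloor k/n\rfloor$, and any other integer composition has a larger partial sum of its top $j$ entries for some $j$. This is a standard fact about majorization restricted to integer points; I would prove it by the usual ``take from the top, give to the bottom'' transfer argument, checking that each transfer step preserves integrality, preserves the sum, and moves strictly down in majorization order toward $V(k,n)$. Combining Steps 1 and 2: for every weak composition $v$ of $k$ we have $v \succeq V(k,n)$, hence $\|v\|_\Omega \geq \|V(k,n)\|_\Omega$, so the minimum in the Gutt--Hutchings formula equals $\|V(k,n)\|_\Omega$. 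Finally, by the first Remark after Theorem~\ref{many} one may replace $\Omega$ by $\bar\partial_+\Omega$ in the support function, which yields the stated form
$c_k(X_\Omega) = \max\{\langle V(k,n),w\rangle : w\in\bar\partial_+\Omega\}$.

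I expect the main obstacle to be making the majorization argument clean rather than deep: one must be careful that the relevant inequality is ``$\|v'\|_\Omega \geq \|v\|_\Omega$ when $v'\succeq v$'' in the right direction, and that Schur-convexity is being applied to a genuinely symmetric convex function on the nonnegative orthant (which it is, since the symmetric group acts by isometries of $\|\cdot\|_\Omega$ and support functions are sublinear hence convex). A secondary subtlety is that $V(k,n)$ as written has its $\lceil k/n\rceil$ entries last and $\lfloor k/n\rfloor$ entries first, whereas the ``extremal'' composition in the majorization lemma is naturally written in nonincreasing order; but since both $\|\cdot\|_\Omega$ and $\bar\partial_+\Omega$ are permutation-invariant, the ordering of the entries of $V(k,n)$ is immaterial, and I would remark on this once at the outset. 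No compactness or regularity issues arise beyond those already guaranteed by the convexity of $X_\Omega$, so the argument is short once the two ingredients are in place.
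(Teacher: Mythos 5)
Your proof is correct, and it is at bottom the same argument as the paper's, just packaged through the standard language of majorization theory. The paper's proof constructs exactly the ``Robin Hood'' transfer map you describe (their $\mathcal{T}$, which replaces one copy of $v_1$ by $v_1+1$ and one copy of $v_n$ by $v_n-1$) and proves two lemmas: that the support function can be realized at an \emph{ordered} point of $\Omega$ (their Lemma \ref{exist}), and that a transfer step does not increase $\|\cdot\|_\Omega$ (their Lemma \ref{transfer}). Those two lemmas together are precisely a hands-on, specialized proof that the symmetric sublinear functional $\|\cdot\|_\Omega$ is monotone along a chain of $T$-transforms, i.e.\ of the Schur-convexity you are invoking as a black box; and the observation that the transfer map terminates at $V(k,n)$ is precisely your combinatorial claim that $V(k,n)$ is the minimum of the weak compositions in the majorization order. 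What your framing buys is brevity and a pointer to the literature (Marshall--Olkin); what the paper's buys is self-containment, since it never needs to appeal to the general theorem ``symmetric convex $\Rightarrow$ Schur-convex.'' One small point you handled correctly and should keep: since the Schur-convexity/majorization statements are formulated for decreasing rearrangements while $V(k,n)$ is written with its ceiling entries last, the permutation-invariance of $\|\cdot\|_\Omega$ (equivalently, of $\bar\partial_+\Omega$) is what makes the ordering of the entries immaterial, and it is worth saying so explicitly, as you did.
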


A similar collapse occurs for symmetric concave domains. 

\begin{theorem}
\label{balance-c}
If $\Omega \subset \RR^n_{\geq 0}$ is symmetric and $X_{\Omega}$ is concave, then 
\begin{equation*}
\label{ }
c_k(X_{\Omega}) =\min\left\{\langle \check{V}(k,n),w\rangle \mid w \in \bar{\partial}_+ \Omega\right\}
\end{equation*}
where 
\begin{equation*}
\label{ }
\check{V}(k,n) = \left(   \underbrace{\left\lceil \frac{k+n-1}{n}\right\rceil , \dots, \left\lceil \frac{k+n-1}{n}\right\rceil}_{(k +n-1\mod n)-\mathrm{times}},\left\lfloor \frac{k+n-1}{n} \right\rfloor, \dots,  \left\lfloor \frac{k+n-1}{n}\right \rfloor\right).
\end{equation*}
\end{theorem}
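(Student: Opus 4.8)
The plan is to deduce the statement from the concave formula of Theorem~\ref{many} by showing that, among all compositions $v$ of $k+n-1$ into $n$ positive parts, the balanced composition $\check{V}(k,n)$ maximizes $[v]_\Omega := \min\{\langle v,w\rangle \mid w \in \Omega\}$. Two structural features of the functional $v \mapsto [v]_\Omega$ make this work. First, it is \emph{concave} on $\RR^n$, being an infimum of the linear functions $v \mapsto \langle v,w\rangle$, $w \in \Omega$. Second, since $\Omega$ is symmetric it is \emph{permutation invariant}: for any $\sigma \in S_n$ one has $[\sigma v]_\Omega = \min_w \langle v, \sigma^{-1}w\rangle = [v]_\Omega$, because $\sigma^{-1}\Omega = \Omega$.

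Next I would invoke the classical majorization principle of Rado (equivalently, Hardy--Littlewood--P\'olya together with Birkhoff--von Neumann): if $v'$ is majorized by $v$, written $v' \prec v$, then $v'$ lies in the convex hull of the $S_n$-orbit of $v$. Writing $v' = \sum_\sigma \lambda_\sigma\,\sigma v$ with $\lambda_\sigma \ge 0$ and $\sum_\sigma \lambda_\sigma = 1$, concavity and permutation invariance give $[v']_\Omega \ge \sum_\sigma \lambda_\sigma [\sigma v]_\Omega = [v]_\Omega$; that is, $[\,\cdot\,]_\Omega$ is Schur-concave. (Alternatively one argues one Pigou--Dalton transfer at a time: replacing a pair of coordinates $(v_i,v_j)$ with $v_i \ge v_j+2$ by $(v_i-1,v_j+1)$ only increases $[\,\cdot\,]_\Omega$ --- immediate by averaging against the $i \leftrightarrow j$ swap when $v_i - v_j = 2$, and by iteration in general.) This is the concave counterpart of the role played by Schur-convexity of $\|\,\cdot\,\|_\Omega$ in the proof of Theorem~\ref{balance}.

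It then remains to record the elementary combinatorial fact that, among all $v \in \NN^n$ with $\sum_i v_i = k+n-1$, the balanced composition $\check{V}(k,n)$ --- whose entries are $\lfloor (k+n-1)/n \rfloor$ and $\lceil (k+n-1)/n \rceil$, so differ by at most one --- is majorized by every such $v$; concretely, for each $m$ the sum of the $m$ largest entries of any such $v$ is at least the sum of the $m$ largest entries of $\check{V}(k,n)$. Since $k \ge 1$ forces $k+n-1 \ge n$, all entries of $\check{V}(k,n)$ are at least $1$, so it is itself an admissible competitor. Schur-concavity then gives $[\check{V}(k,n)]_\Omega \ge [v]_\Omega$ for every admissible $v$, so the maximum in the concave formula of Theorem~\ref{many} is attained at $\check{V}(k,n)$ and equals $\min\{\langle \check{V}(k,n), w\rangle \mid w \in \Omega\}$. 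Replacing $\Omega$ by $\bar{\partial}_+\Omega$ by means of the remark following Theorem~\ref{many} yields exactly the asserted formula; the precise ordering of the entries of $\check{V}(k,n)$ is immaterial here, by the permutation invariance noted above.

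The only genuine verification required is the minimality of $\check{V}(k,n)$ in the majorization order, a short sorting argument using that sums of entries of integer vectors are integers, together with the routine check that the Schur-concavity principle applies. I do not expect a serious obstacle: the content of the theorem is the collapse itself --- the recognition that one balanced vector simultaneously solves all $\binom{k+n-2}{n-1}$ of the competing optimization problems.
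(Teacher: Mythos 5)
Your argument is correct, but it reaches the conclusion by a genuinely different, more conceptual route than the paper's. You observe that $v \mapsto [v]_\Omega$ is concave (a pointwise minimum of linear functionals) and $S_n$-invariant (since $\Omega$ is symmetric), then invoke Rado's theorem and Birkhoff--von Neumann to deduce Schur-concavity, so that $[\,\cdot\,]_\Omega$ is maximized over compositions of $k+n-1$ at the majorization-minimal vector, which is exactly $\check{V}(k,n)$. The paper instead proceeds constructively: Lemma~\ref{real-c} shows that for an ordered-backwards $v$ the minimizing $w$ may be chosen ordered, Lemma~\ref{transfer-c} uses this to verify directly that a single Pigou--Dalton transfer $\mathcal{B}$ (one unit from the largest to the smallest entry of $v$) does not decrease $[v]_\Omega$, and one iterates $\mathcal{B}$ until it stabilizes at $\check{V}(k,n)$. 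The map $\mathcal{B}$ and its monotonicity are precisely your parenthetical ``alternative'' Pigou--Dalton argument; the difference is that you obtain the monotonicity by averaging against the $i\leftrightarrow j$ transposition and appealing to concavity, whereas the paper obtains it from the ordered-minimizer lemma, and you outsource the termination of the transfer process to majorization theory while the paper checks it by hand via the quantity $\Delta(v)$. What your approach buys is the clean conceptual statement that the collapse to a single optimization problem is nothing but Schur-concavity of $[\,\cdot\,]_\Omega$, and that only concavity and permutation-invariance are needed. What the paper's approach buys is self-containedness (no black-box citation of Rado/Birkhoff--von Neumann) and a proof that runs word-for-word parallel with the convex case, Theorem~\ref{balance}, where your argument would correspondingly use convexity of $\|\,\cdot\,\|_\Omega$ and Schur-convexity.
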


\medskip

\noindent {\bf New blind spots.}
The following simple case demonstrates how Propositions \ref{balance} and \ref{balance-c} can be used to reveal new and useful blind spots of the Gutt-Hutching's capacities. Let $f \colon [0,1] \to \RR_{\geq 0}$ be a  continuous  strictly decreasing  function such that $f(0)=1$ and $f(1)=0$. Let  $\Omega_f$ be the domain in $\RR^2_{\geq 0}$ that is bounded by the axes and the graph of $f$ and  set $X_f= \mu^{-1}(\Omega_f)$.  Suppose further that $f$ is smooth on $[0,1)$ and satisfies $f'(0)=0$, $f''<0$ and $f^{-1} = f$.  Then $X_f$ is a (strictly) convex toric domain in $\RR^4$ which is symmetric. In this case, Theorem  \ref{balance} implies that 
\begin{equation*}
\label{ }
c_k(X_{\Omega}) =\max\left\{\langle V(k,2),(x,f(x))\rangle \mid x \in [0,1]\right\}.
\end{equation*}
A simple argument, see Proposition \ref{sym} and Corollary \ref{f} below, implies 
that for even values of $k$ this reduces to 
\begin{equation*}
c_k (X_f) = k x(f),
\end{equation*}
where $x(f)$ is the unique fixed point of $f$. For odd values of $k$ it reduces to
 \begin{equation*}
c_k (X_f) = \frac{k-1}{2}x_k + \frac{k+1}{2}f(x_k)
\end{equation*}
where $x_k$ is the unique solution of
\begin{equation}
\label{hit}
f'= -\frac{k-1}{k+1}.
\end{equation}
{\bf The important point here is that the capacities $c_k(X_f)$ are all determined by the $1-$jet of $f$ at $x(f)$ and at the sequence of points $x_k$, for $k$ odd, that converges monotonically to $x(f)$.}   This is illustrated in Example \ref{round} and Figure \ref{red}, below. 

The {\em blind spots} referred to in the title of this paper are epitomized by the values of $f$ away from the points of the  sequence $x_k \to x(f)$ and, by symmetry, its mirror sequence $f(x_k) \to x(f)$. By altering $f$ away from all these points, in a manner which preserves its crucial properties, one can change the volume while keeping the capacities fixed. By altering $f$ near one of the points  $x_j$ and its mirror image $f(x_j)$, in a manner which preserves the function's crucial properties, one can vary the $j$th capacity while leaving the others unchanged. One can also alter $f$ away from all these points, in a manner which preserves its crucial properties,  and deform the region, while keeping its volume fixed, with the goal  of changing its symplectomorphism class. These  are the simple ideas that underly the proofs of Theorems \ref{novolume}, \ref{mutual}, and \ref{blind}, respectively. \footnote{These ideas were motivated by the examples that appear in the authors' previous work, \cite{kl}.} 

\medskip

\begin{example}
\label{round}
For the function $f_2(x)=\sqrt{1-x^2}$, the capacities $c_k(X_{f_2})$ are determined by the values of $f_2$ on the sequence of points 
$$x_k = \frac{k-1}{\sqrt{2k^2+2}}, \text{ for odd $k$}$$
and its limit $x(f_2) =\frac{1}{\sqrt{2}}$. They are given by the formula
\begin{equation*}
\label{ }
c_k (X_{f_2}) = 
\begin{cases}
    \frac{k}{\sqrt{2}}  &, \text{ if $k$ is even}, \\
    {} & {}\\
   \frac{\sqrt{k^2+1}}{\sqrt{2}}    &, \text{ if $k$ is odd}.
\end{cases}
\end{equation*}
\end{example}

\begin{figure}[H]
\centering
\caption{A view of the infinite symplectic rib cage of $X_{f_2}$.}
\bigskip
  \includegraphics[width=10cm]{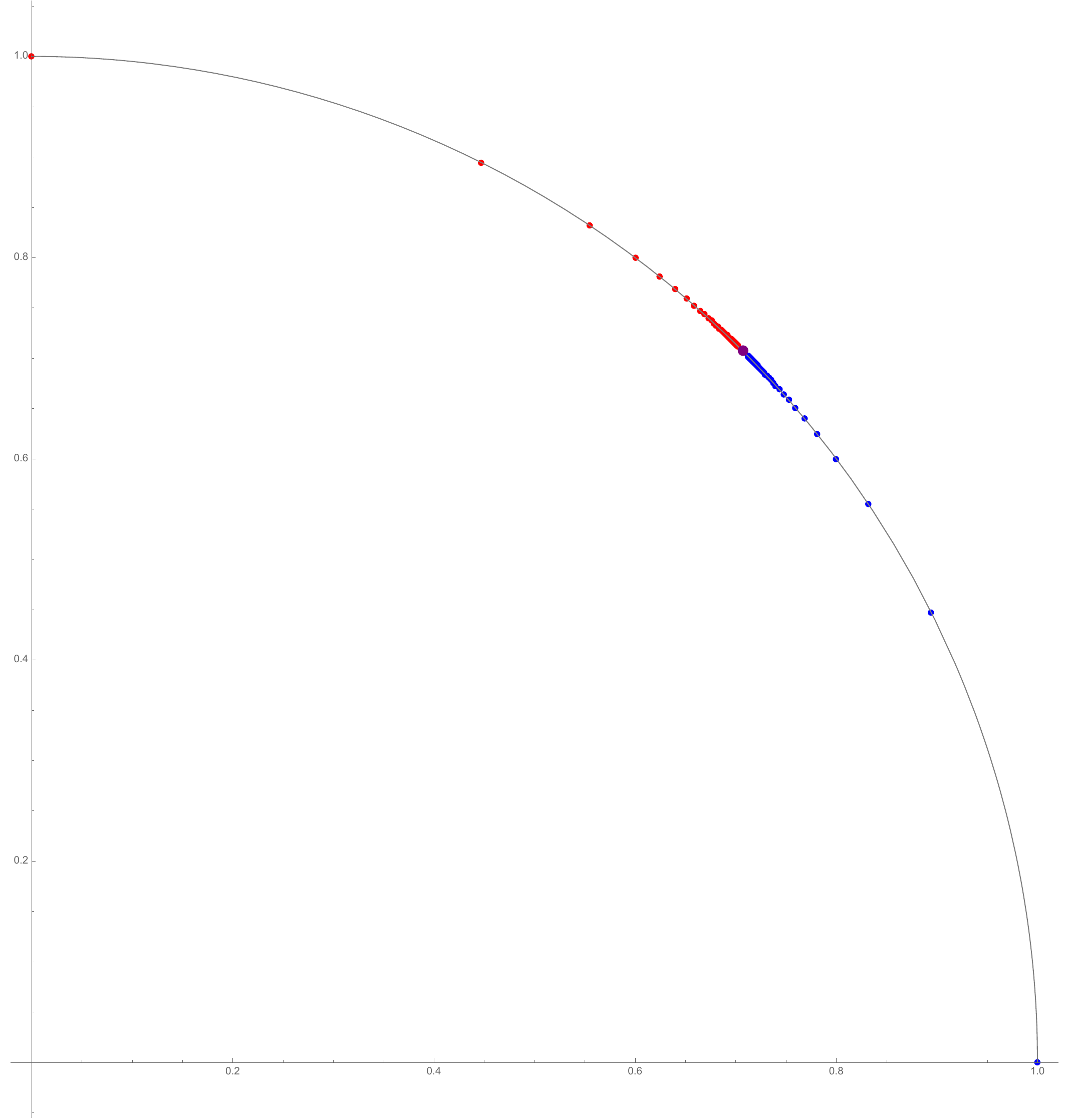}
  \label{red}
  
  \medskip
The red points correspond to the sequence $(x_{2n-1}, f_2(x_{2n-1}))$ and determine all the odd index capacities of $X_{f_2}$. They converge to the purple point $(x(f_2), x(f_2))$ which determines all the even index capacities of $X_{f_2}$.
\end{figure}


\medskip

\medskip

\noindent {\bf Infinite symplectic rib cages.}  The domains $X_f$ above yield the first examples of star-shaped domains whose capacities are represented by  infinitely many geometrically distinct  closed characteristics. In the spirit of Arnold's evocative analogy from \cite{ar-first}, we refer to the collection of these characteristics as an {\em infinite symplectic rib cage}. 

The carriers of the odd index capacities are distinct ribs, that is distinct closed characteristics. A representative for $c_1(X_f)$ is a closed characteristic with image $\mu^{-1}((0,1))$.  For each odd $k=2n-1>1$ the capacity $c_{2n-1}(X_f)$ is represented by (any of) the closed characteristics on the Lagrangian torus $\mu^{-1}((x_k, f(x_k))) \subset \partial X_f$. These characteristics represent the class $(n-1,n)\in \mathrm{H}_1(S^1 \times S^1, \ZZ) \simeq \ZZ^2$ with respect to the standard basis. 

For different even values, $k=2n$, the capacity carriers may not be geometrically distinct. In particular, the capacity $c_{2n}(X_f)$ is represented by (any of) the closed characteristics on the Lagrangian torus $\mu^{-1}((x(f),x(f))) \subset \partial X_f$ in class $(n,n)$. So, for example, representatives of $c_4(X_f)$ cover representatives of $c_2(X_f)$. It is a curious fact that this covering phenomenon corresponds to the monotone Lagrangian torus $\mu^{-1}((x(f),x(f)))$.

One also observes infinite symplectic rib cages for concave toric domains of the form $X_h$ where $h$ is convex ($h''\geq 0$), see Proposition \ref{sym-g}. In that case, all the even index capacities are represented by distinct closed characteristics.

\begin{question}\label{allrib}
Is there a star-shaped domain $U$ of $\RR^{2n}$  such that {\bf all} the capacities $c_k(U)$ are represented by  distinct closed characteristics?
\end{question}

\medskip

\noindent {\bf New capacity computations.}
Propositions \ref{balance} and \ref{balance-c}  yield formulas for the capacities of many interesting domains. Several of these computations are presented in Section \ref{examples}. Here we mention just the following.
\begin{example}[The Lagrangian Bidisk]
\label{lag}
Let $\Omega_L$ be the concave region in $\RR^2_{\geq 0}$ bounded by the axes and the curve 
\begin{equation*}
\label{ }
\alpha(t) =\left( 2 \sin \left(\frac{t}{2}\right)  -t \cos \left(\frac{t}{2}\right),   2 \sin \left(\frac{t}{2}\right)  + (2\pi -t) \cos \left(\frac{t}{2}\right)\right),\quad t \in [0,2\pi].
\end{equation*}
In \cite{ra}, Ramos proves that the interior of the corresponding toric domain $X_{\Omega_L}$ is symplectomorphic to the 
interior of the {\em Lagrangian bidisk}
$$
P_L= \left\{ (x_1+i y_1, x_2+ iy_2) \in \CC^2 \mid  x_1^2 +x_2^2 \leq 1, \, y_1^2 +y_2^2 \leq 1\right\}.
$$
Theorem  \ref{balance-c} implies that for an odd $k$ we have  
\begin{eqnarray*}
c_k (X_{\Omega_L})& = & \min_{t \in [0,2\pi]} \frac{1}{2}\left\{\langle (k+1, k+1), \alpha(t) \rangle\right\}\\
 & = & 2k+2. 
\end{eqnarray*}
For even $k$ we have  
\begin{eqnarray*}
c_k (X_{\Omega_L})& = & \min_{t \in [0,2\pi]} \frac{1}{2}\left\{\langle (k+2, k), \alpha(t) \rangle\right\}\\ & = &  (2k+2) \sin \left( \frac{\pi}{2}\left(\frac{k}{k+1}\right)\right)
\end{eqnarray*}
Invoking  Ramos's symplectomorphism from \cite{ra}, we then get the following simple formula for the Gutt-Hutchings capacities of the Lagrangian Bidisk:
\begin{equation}
\label{super}
c_k (P_L) = 
\begin{cases}
2k+2,  & \text{for odd $k$}\\ \\
 (2k+2) \sin \left( \frac{\pi}{2}\left(\frac{k}{k+1}\right)\right), & \text{for even  $k$}. \\
\end{cases}
\end{equation}
\end{example}

\begin{remark} The first three Ekeland-Hofer capacities of $P_L$ were computed only recently by Baracco, Fassina and  Pinton in \cite{bfp}.
\end{remark}

\medskip

\noindent {\bf Rigidity of symplectic embeddings to and from balls.}  The following is a simple application of Propositions \ref{balance} and \ref{balance-c}.

\begin{corollary}
\label{ball}
If $X_{\Omega} \subset \RR^{2n}$ is a symmetric convex toric domain, then 
\begin{equation}
\label{c1}
c_{1} (X_{\Omega}) = \max \{\delta \mid (0, \dots,0, \delta) \in \Omega\},
\end{equation}
and for every $\ell \in \NN$ we have 
\begin{equation}
\label{cln}
c_{\ell n} (X_{\Omega}) = \ell n \max \{\delta \mid (\delta, \dots, \delta) \in \Omega\}.
\end{equation}
If $X_{\Omega} \subset \RR^{2n}$ is a symmetric concave toric domain, then for every $\ell \in \NN$ we have 
\begin{equation}
\label{vln}
c_{n(\ell-1)+1} (X_{\Omega}) = \ell n \max \{\delta \mid (\delta, \dots, \delta) \in \Omega\}.
\end{equation}
\end{corollary}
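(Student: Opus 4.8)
The plan is to read all three identities directly off Theorems~\ref{balance} and~\ref{balance-c}: for the indices $1$, $\ell n$ and $n(\ell-1)+1$ the balanced weight vectors are extremely simple. First I would record the computations. For $k=1$ one has $\lfloor 1/n\rfloor=0$, $\lceil 1/n\rceil=1$ and $1\bmod n=1$, so $V(1,n)=(0,\dots,0,1)$. For $k=\ell n$ one has $\ell n\bmod n=0$ and $\lfloor \ell n/n\rfloor=\ell$, so $V(\ell n,n)=(\ell,\dots,\ell)$. For $k=n(\ell-1)+1$ one has $k+n-1=\ell n$, hence $(k+n-1)\bmod n=0$ and $\check V(k,n)=(\ell,\dots,\ell)$. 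Substituting these into Theorems~\ref{balance} and~\ref{balance-c}, and invoking the Remark after Theorem~\ref{many} to replace $\bar{\partial}_+\Omega$ by $\Omega$ in the two convex formulas, reduces \eqref{c1}, \eqref{cln} and \eqref{vln} to the elementary set-optimization statements
\[ \max\{\,w_n\mid w\in\Omega\,\}=\max\{\,\delta\mid(0,\dots,0,\delta)\in\Omega\,\}, \]
\[ \max\{\,w_1+\dots+w_n\mid w\in\Omega\,\}=n\,\max\{\,\delta\mid(\delta,\dots,\delta)\in\Omega\,\}, \]
\[ \min\{\,w_1+\dots+w_n\mid w\in\bar{\partial}_+\Omega\,\}=n\,\max\{\,\delta\mid(\delta,\dots,\delta)\in\Omega\,\}, \]
the first two for $X_\Omega$ symmetric convex and the third for $X_\Omega$ symmetric concave.

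The two convex identities I would settle by symmetrizing points of $\Omega$. For the first, write $\Omega=\hat\Omega\cap\RR^n_{\ge0}$, where $\hat\Omega$ is compact, convex, and invariant under all coordinate sign changes; given $w=(w_1,\dots,w_n)\in\Omega$, the average of the $2^{n-1}$ points $(\pm w_1,\dots,\pm w_{n-1},w_n)\in\hat\Omega$ equals $(0,\dots,0,w_n)$, which therefore lies in $\hat\Omega\cap\RR^n_{\ge0}=\Omega$; this gives $w_n\le\max\{\delta\mid(0,\dots,0,\delta)\in\Omega\}$, and the reverse inequality is immediate. For the second, $\Omega$ is convex and $S_n$-invariant, so for $w\in\Omega$ the average of the $n!$ coordinate permutations of $w$ lies in $\Omega$ and equals $(\bar w,\dots,\bar w)$ with $\bar w=(w_1+\dots+w_n)/n$; hence $\bar w\le\max\{\delta\mid(\delta,\dots,\delta)\in\Omega\}$, and the reverse inequality is again immediate.

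The concave identity carries the real content, and I expect it to be the main obstacle. Put $\delta_\ast=\max\{\delta\mid(\delta,\dots,\delta)\in\Omega\}$ and $p=(\delta_\ast,\dots,\delta_\ast)$. A concave toric domain contains a relative neighbourhood of the origin in $\RR^n_{\ge0}$, so $\delta_\ast>0$; moreover $p$ is a boundary point of $\Omega$ lying in $\RR^n_{>0}$, hence $p\in\bar{\partial}_+\Omega$, which gives the inequality ``$\le$''. For ``$\ge$'' --- the point being that $\Omega$ itself is \emph{not} convex here, so it cannot be symmetrized directly --- I would pass to the complement $C=\RR^n_{\ge0}\setminus\Omega$. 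This set is convex by hypothesis, it contains all sufficiently far points of each coordinate axis and is unbounded in the direction $(1,\dots,1)$ (since $\Omega$ is compact), and its closure contains $\bar{\partial}_+\Omega$ (a boundary point of $\Omega$ in the open orthant is a limit of points of $\RR^n_{\ge0}$ outside $\Omega$). Since $p$ lies on the boundary of the convex set $C$, choose a nonzero supporting functional $a$ with $\langle a,w\rangle\ge\langle a,p\rangle$ for all $w\in C$; testing this against the far points of the coordinate axes and letting them run to infinity forces every $a_i\ge0$, so $a_1+\dots+a_n>0$. Because $C$ and $p$ are $S_n$-invariant, averaging the supporting inequality over $S_n$ replaces $a$ by a positive multiple of $(1,\dots,1)$, and therefore $w_1+\dots+w_n\ge n\delta_\ast$ for every $w\in C$, hence for every $w\in\bar{\partial}_+\Omega\subseteq\bar C$ by continuity. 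This symmetrization of the supporting hyperplane, together with the verification that $a$ cannot be degenerate, is the only step that is not routine; everything else follows at once from the collapsed formulas in Theorems~\ref{balance} and~\ref{balance-c}.
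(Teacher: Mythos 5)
Your proof is correct, and in places it is more careful and self-contained than the paper's own argument, which is largely a sketch. The two agree on the opening move: specializing $V(k,n)$ and $\check V(k,n)$ at $k=1$, $k=\ell n$, and $k=n(\ell-1)+1$ in Theorems~\ref{balance} and~\ref{balance-c} to reduce everything to elementary optimization statements on $\Omega$ or $\bar\partial_+\Omega$. After that you diverge. For \eqref{cln} the paper first perturbs so that $\bar\partial_+\Omega$ is smooth, invoking continuity of the capacities, and then appeals to ``elementary constrained optimization theory'' together with the symmetry to locate the maximizer on the superdiagonal; the concave identity \eqref{vln} is dispatched with ``a similar argument,'' and \eqref{c1} is simply declared ``equivalent'' to the formula $\max\{w_n\mid w\in\bar\partial_+\Omega\}$ without the short justification you give. (The paper also writes $\max$ where Theorem~\ref{balance-c} yields a $\min$ in the concave case; you correctly use $\min$.) You avoid the smoothing step entirely and instead use convexity directly: averaging over sign changes for \eqref{c1}, averaging over $S_n$ for \eqref{cln}, and — correctly noting that in the concave case $\Omega$ is not convex so averaging points of $\Omega$ fails — passing to the convex complement $C=\RR^n_{\geq0}\setminus\Omega$, taking a supporting hyperplane at $(\delta_*,\dots,\delta_*)$, checking the normal is nonnegative by testing far axis points, and then symmetrizing the supporting functional. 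Your route is more elementary, makes no smoothness hypothesis, and gives an honest proof of the concave case where the paper waves; the paper's route is shorter on the page but leaves these gaps to the reader.
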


\begin{proof}
For the convex case, Theorem  \ref{balance} implies that for $k=1$ we have 
\begin{equation*}
\label{ }
c_{1} (X_{\Omega}) = \max\{ w_n \mid (w_1, \dots w_n) \in \bar{ \partial}_+ \Omega\}
\end{equation*}
which is equivalent to \eqref{c1}. For $k=\ell n$, Theorem  \ref{balance-c} implies 
\begin{equation*}
\label{ }
c_{\ell n} (X_{\Omega}) = \max\{ \ell(w_1 + \dots + w_n) \mid (w_1, \dots w_n) \in \bar{ \partial}_+ \Omega\}
\end{equation*}
By continuity of the capacities we may assume that $\bar{\partial}_+\Omega$ is smooth. It then follows from the symmetry assumption  and elementary constrained optimization theory, that $\max\{ \ell(w_1 + \dots + w_n) \mid (w_1, \dots w_n) \in \bar{ \partial}_+ \Omega\}$ is realized at the intersection of $\bar{\partial}_+ \Omega$ with the super diagonal.  Equation \eqref{cln}, follows from this. 

For the concave case, Theorem  \ref{balance-c} implies that for $k=n(\ell-1)+1$
\begin{equation*}
\label{ }
c_{n(\ell-1)+1} (X_{\Omega}) = \max\{ \ell(w_1 + \dots + w_n) \mid (w_1, \dots w_n) \in \bar{ \partial}_+ \Omega\}.
\end{equation*}
A similar argument to that above yields \eqref{vln}.

\end{proof}

Corollary \ref{ball} immediately implies a rigidity result. For a symmetric convex toric domain $X_{\Omega} \subset \RR^{2n}$, the largest standard ball contained in $X_{\Omega}$ has radius equal to $$\sqrt{\max \{\delta \mid (0, \dots,0, \delta) \in \Omega\}/\pi}$$  and the largest standard ball containing  $X_{\Omega}$ has radius equal to $$\sqrt{n \max \{\delta \mid (\delta, \dots, \delta) \in \Omega\}/\pi}.$$ Applying Corollary \ref{ball}, it follows that one can not do better with nonstandard balls. In particular, we have the following.
\begin{theorem}\label{balls}
If $X_{\Omega} \subset \RR^{2n}$ is a symmetric convex toric domain, then 
\begin{equation}
\label{sup}
\sup\{a \mid \text{  $\exists$ a symplectic embedding  } B^{2n}(a) \to X_{\Omega}\} = \max \{\delta \mid (0, \dots,0, \delta) \in \Omega\}
\end{equation}
and 
\begin{equation}
\label{inf}
\inf \{A \mid \text{   $\exists$ a symplectic embedding  } X_{\Omega}  \to B^{2n}(A) \} = n \max \{\delta \mid (\delta, \dots, \delta) \in \Omega\}.
\end{equation}
\end{theorem}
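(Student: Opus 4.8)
The plan is to deduce Theorem \ref{balls} directly from Corollary \ref{ball} together with the monotonicity and conformality axioms satisfied by the Gutt--Hutchings capacities. First I would record the two elementary geometric facts: for a symmetric convex toric domain $X_\Omega$, the ball $B^{2n}(a)$ equals $X_{\Omega_0}$ where $\Omega_0 = \{w \in \RR^n_{\geq 0} \mid w_1 + \cdots + w_n \leq a\}$ is the standard simplex; the inclusion $B^{2n}(a) \subseteq X_\Omega$ holds precisely when $\Omega_0 \subseteq \Omega$, which by symmetry and convexity of $\hat\Omega$ is equivalent to $(a,0,\dots,0) \in \Omega$, i.e. to $a \leq \max\{\delta \mid (0,\dots,0,\delta)\in \Omega\}$; likewise $X_\Omega \subseteq B^{2n}(A)$ holds iff $\Omega \subseteq \Omega_0'$ with $\Omega_0'$ the simplex of size $A$, which by symmetry is equivalent to $(\delta,\dots,\delta)\in\Omega \Rightarrow n\delta \leq A$, i.e. to $A \geq n\max\{\delta\mid (\delta,\dots,\delta)\in\Omega\}$.

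Next I would prove the ``$\leq$'' direction of \eqref{sup} and the ``$\geq$'' direction of \eqref{inf} using only monotonicity of $c_1$ (resp. $c_{\ell n}$) under symplectic embeddings, combined with the computation $c_1(B^{2n}(a)) = a$ and $c_{\ell n}(B^{2n}(a)) = \ell n \cdot \lceil \text{something}\rceil$ — more precisely, applying Corollary \ref{ball} to the ball itself (which is the symmetric convex toric domain associated to the simplex) gives $c_1(B^{2n}(a)) = a$ and $c_{\ell n}(B^{2n}(a)) = \ell n \cdot (a/n) = \ell a$. Then a symplectic embedding $B^{2n}(a) \to X_\Omega$ forces $a = c_1(B^{2n}(a)) \leq c_1(X_\Omega) = \max\{\delta\mid(0,\dots,0,\delta)\in\Omega\}$ by \eqref{c1}, and an embedding $X_\Omega \to B^{2n}(A)$ forces $\ell n \max\{\delta\mid(\delta,\dots,\delta)\in\Omega\} = c_{\ell n}(X_\Omega) \leq c_{\ell n}(B^{2n}(A)) = \ell A$ by \eqref{cln}; letting this hold for all $\ell$ (or just one) yields $A \geq n\max\{\delta\mid(\delta,\dots,\delta)\in\Omega\}$. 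The reverse inequalities ``$\geq$'' in \eqref{sup} and ``$\leq$'' in \eqref{inf} follow immediately from the two geometric inclusion facts of the first paragraph, since an honest inclusion is in particular a symplectic embedding, so the supremum (resp. infimum) is at least (resp. at most) the stated value. Combining the two directions gives equality in both \eqref{sup} and \eqref{inf}.

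The only genuine subtlety — and the step I expect to need the most care — is verifying the geometric equivalences ``$B^{2n}(a) \subseteq X_\Omega \iff (a,0,\dots,0)\in\Omega$'' and ``$X_\Omega \subseteq B^{2n}(A) \iff n\max\{\delta:(\delta,\dots,\delta)\in\Omega\} \leq A$'' for symmetric convex $\Omega$. For the first: $\Omega_0 \subseteq \Omega$ trivially implies $(a,0,\dots,0)\in\Omega$; conversely, if $(a,0,\dots,0)\in\Omega$ then by symmetry all its coordinate permutations lie in $\Omega$, and since $\hat\Omega$ is convex the simplex $\Omega_0$, being the convex hull (intersected with $\RR^n_{\geq0}$) of these vertices and the origin, is contained in $\Omega$. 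For the second: if $X_\Omega \subseteq B^{2n}(A)$ then in particular $(\delta,\dots,\delta)\in\Omega$ implies $(\delta,\dots,\delta)\in\Omega_0'$, i.e. $n\delta\leq A$; conversely if $n\max\{\delta:(\delta,\dots,\delta)\in\Omega\}\leq A$, one uses symmetry plus convexity of $\hat\Omega$ to show the diagonal point realizes $\max\{w_1+\cdots+w_n : w\in\bar\partial_+\Omega\}$ (exactly the constrained-optimization observation already used in the proof of Corollary \ref{ball}), whence every $w\in\Omega$ satisfies $w_1+\cdots+w_n \leq n\max\{\delta:(\delta,\dots,\delta)\in\Omega\}\leq A$, i.e. $\Omega\subseteq\Omega_0'$. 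Everything else is a formal consequence of the capacity axioms and Corollary \ref{ball}, so I would keep the writeup short and refer back to the optimization argument in the proof of Corollary \ref{ball} rather than repeating it.
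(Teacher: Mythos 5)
Your proposal is correct and follows essentially the same route as the paper: the paper's (terse) argument is precisely that the optimal inclusions of standard balls give the values on the right-hand sides, and Corollary \ref{ball} together with monotonicity of $c_1$ and $c_{\ell n}$ shows nonstandard symplectic embeddings cannot do better. You have simply spelled out the two geometric inclusion equivalences and the capacity computations for the ball in more detail than the paper does.
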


Equation \eqref{sup} also holds if $X_{\Omega}$ is concave. For concave  toric domains, equation \eqref{inf} is known to be false in general (see, for example, \cite{or}).

\subsection{Organization}
Section \ref{forgetting} contains an analysis of how the capacities $c_k(E_p(1,a))$ loose sight of $a$ as $p$ goes to infinity. The simplified capacity formulas of Theorem \ref{balance} and Theorem  \ref{balance-c} are established in Section \ref{collapse}. These formulas are then used,  in Section \ref{examples}, to derive explicit formulas for the Gutt-Hutchings capacities in several new examples. The proofs of Theorems \ref{novolume},\ref{mutual} and \ref{blind} are presented in Section \ref{proofs}. The last section contains a discussion of some questions motivated by these results and their proofs.

\section{The development of blind spots at corners}\label{forgetting} For a fixed $a>1$, consider the family of domains 
\begin{equation}
\label{pellipse}
E_p(1,a) = \left\{ (z_1, z_2) \in \CC^2 \mid  (\pi |z_1|^2)^p + \left(\frac{\pi |z_2|^2}{a}\right)^p \leq 1\right\}.
\end{equation}
As $p$ varies from  $1$ to $\infty$ these domains connect the ellipse $E(1,a)$ to the polydisk $P(1,a)$. In this section we analyze the process by which the capacities $c_k(E_p((1,a))$ loose sight of $a$ along the way.

Each $E_p(1,a)$ is a convex toric domain of the form  $\mu^{-1}({\Omega_{f_p}})$ where $\Omega_{f_p}$ is the region in $\RR^2_{\geq0}$ bounded by the axes and the graph of the smooth strictly concave function  $$f_p(x) = a (1-x^p)^{\frac{1}{p}}.$$ 
Since $a>1$, this domain is not symmetric. However, the formula from Theorem \ref{many}  can still be significantly simplified in this setting. 

Working a little more generally, let  $\mathcal{V}$ be the set of  continuous functions $f \colon [0,1] \to \RR_{\geq 0}$  with the following properties:
\begin{enumerate}
 \item[(1)]  $f(0)\geq1$.
 \item[(2)]  $f(1)=0$.
 \item[(3)] $f$ is smooth on $[0,1)$.
  \item[(4)] $f'(0)=0$.
  \item[(5)] $\lim_{x\to 1^-}f'(x)=-\infty$.
  \item[(6)] $f'' <0$ on $(0,1)$. 
\end{enumerate}

Each $f \in  \mathcal{V}$ has  a unique fixed point in $(0,1)$ which we denote by $x(f)$. For $k \geq 2$ and $j=0, \dots k-2$ set 
$$
I^k_j =\left(\frac{-j-1}{k-j-1}, \frac{-j}{k-j}\right].
$$
Let
$
I^k_{k-1} =\left(-\infty, -k+1\right].
$
We then define $J_k =J_k(f)$ to be the integer in $[0, k-1]$ that is determined uniquely by the condition
\begin{equation*}
\label{ }
f'(x(f)) \in I^k_{J_k}.
\end{equation*}

\begin{proposition}\label{messy}
 For $f \in \mathcal{V}$, let $\Omega_f$ be the region in $\RR^2_{\geq 0}$ bounded by the axes and the graph of $f$. Let $X_f= \mu^{-1}(\Omega_f)$. Then $c_1(X_f)=1$ and for $k \geq 2$ 
\begin{equation}
\label{form}
c_k(X_f) = \min \{k, F(k,f)\} 
\end{equation}
where 
 \begin{equation*}
\label{ }
F(k,f)=\begin{cases}
     x_{1} +(k- 1) f(x_{1})   & \text{if } J_k=0, \\
   \min\left\{ J_k x_{J_k} + (k-J_k)f(x_{J_k}), (J_k+1)x_{J_k+1} + (k-J_k-1)f(x_{J_k+1}) \right\}   & \text{if }  1 \leq J_k < k-1\\
(k-1) x_{k-1} + f(x_{k-1})   & \text{if }  J_k = k-1,
\end{cases}
\end{equation*}
and $x_{\ell} =x_{\ell}(f)$ is the unique solution of $f'(x) = \frac{-\ell}{k-\ell}$.
\end{proposition}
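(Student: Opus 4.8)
The plan is to specialize the convex formula of Theorem~\ref{many} to the two-dimensional setting and carry out the optimization over weak compositions of $k$ explicitly. For a convex toric domain $X_f = X_{\Omega_f}$ in $\RR^4$, Theorem~\ref{many} gives
\begin{equation*}
c_k(X_f) = \min_{\substack{v_1,v_2 \in \{0\}\cup\NN \\ v_1+v_2 = k}} \|(v_1,v_2)\|_{\Omega_f},
\qquad
\|(v_1,v_2)\|_{\Omega_f} = \max\{v_1 w_1 + v_2 w_2 \mid (w_1,w_2)\in \Omega_f\}.
\end{equation*}
By the Remark after Theorem~\ref{many} the maximum may be taken over $\bar\partial_+\Omega_f$, i.e.\ over the graph $\{(x,f(x)) \mid x\in[0,1]\}$ (the pieces of $\partial\Omega_f$ on the axes are never optimal when both $v_i>0$, and the cases $v_1=0$ or $v_2=0$ can be checked separately). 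So the first step is to show that $\|(k-\ell,\ell)\|_{\Omega_f}$, for $1\le \ell\le k-1$, equals $(k-\ell)x_\ell + \ell f(x_\ell)$ where $x_\ell$ maximizes $g_\ell(x):=(k-\ell)x + \ell f(x)$ on $[0,1]$. Since $f''<0$, $g_\ell$ is strictly concave, so its maximizer is characterized by $g_\ell'(x)=0$, that is $f'(x) = -(k-\ell)/\ell$; rewriting with the index convention of the statement (their $x_\ell$ solves $f'(x)=-\ell/(k-\ell)$, so their $x_\ell$ corresponds to my weight vector $(\ell, k-\ell)$), together with $f'(0)=0$ and $f'\to-\infty$ at $1$ guaranteeing existence and uniqueness of an interior critical point for every slope in $(-\infty,0)$. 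The boundary compositions $(k,0)$ and $(0,k)$ give $\|(k,0)\|_{\Omega_f} = k f(0)\cdot 0 $— more precisely $\max_x (k\cdot x) = k$ using $f(1)=0$, and $\|(0,k)\|_{\Omega_f} = \max_x k f(x) = k f(0) \ge k$; this produces the outer $\min\{k,\,\cdot\,\}$ and also the value $c_1(X_f)=1$ directly.

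The second step is to organize the discrete minimization over $\ell$. Define, for $\ell = 1,\dots,k-1$, the value $\phi(\ell) := (k-\ell)x_\ell + \ell f(x_\ell)$ where $x_\ell$ solves $f'(x_\ell) = -(k-\ell)/\ell$ (matching the paper's indexing after the reflection noted above, so that the endpoints $J_k=0$ and $J_k=k-1$ correspond to the compositions with one entry equal to $1$). I would show $\phi$ is a ``discretization'' of the concave function $\ell\mapsto \max_x\big((k-\ell)x+\ell f(x)\big)$ of a real variable $\ell\in[0,k]$: by the envelope theorem its derivative in $\ell$ is $f(x_\ell) - x_\ell$, which is positive precisely when $x_\ell < x(f)$, i.e.\ when the optimal slope $-(k-\ell)/\ell$ is flatter than $f'(x(f))$, i.e.\ when $\ell/(k-\ell) > $ (some threshold). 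This monotonicity in the continuous variable forces the integer minimum of $\phi$ to be attained at one of the (at most two) integers $\ell$ bracketing the real value $\ell^\ast$ where $x_{\ell^\ast}=x(f)$. Translating "$x_{\ell^\ast}=x(f)$" via $f'(x(f)) = -(k-\ell^\ast)/\ell^\ast$ into the containment $f'(x(f))\in I^k_{J_k}$ is exactly a rearrangement: $f'(x(f)) \in \big(\frac{-j-1}{k-j-1}, \frac{-j}{k-j}\big]$ says the bracketing integers are $j$ and $j+1$ — with the degenerate cases $J_k=0$ (only $\ell=1$ competes, slope flatter than $-1/(k-1)$, i.e.\ $x$-maximizer to the right of where $(1,k-1)$ would sit) and $J_k = k-1$ ($f'(x(f)) \le -(k-1)$, only $\ell = k-1$ competes) handled as the endpoints of this bracketing. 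Substituting $\ell = J_k$ and $\ell = J_k+1$ into $\phi$ gives the stated two-term minimum; the one-sided cases give the single terms.

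The main obstacle I anticipate is the bookkeeping in the second step: proving rigorously that the discrete $\min_\ell \phi(\ell)$ is realized at an integer adjacent to the continuous minimizer, and pinning down the exact inequalities that define $J_k$ (open versus closed endpoints in $I^k_j$, and the correct reflection between "weight vector $(v_1,v_2)$" and "the $x_\ell$ solving $f'=-\ell/(k-\ell)$"). Concretely, one must check that $\phi(\ell+1)-\phi(\ell)$ has the sign predicted by the sign of $f(x)-x$ along the relevant interval — this follows because $\phi(\ell+1)-\phi(\ell) = \int$ of the envelope derivative and the envelope derivative $f(x_t)-x_t$ is itself monotone in $t$ (as $x_t$ moves monotonically with $t$ and $f(x)-x$ is strictly decreasing by $f''<0$), so $\phi$ is "discretely unimodal" and its minimum sits at the bracketing integers. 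Everything else is routine calculus: strict concavity of each $g_\ell$, existence/uniqueness of $x_\ell$ from the intermediate value theorem applied to $f'$ on $[0,1)$, and the separate treatment of the boundary compositions giving the $\min\{k,F(k,f)\}$ form.
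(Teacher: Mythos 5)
Your proposal is correct and follows essentially the same route as the paper's proof: specialize Theorem~\ref{many} to $n=2$, split off the boundary compositions $(k,0)$ and $(0,k)$ to produce the outer $\min\{k,\cdot\}$, use strict concavity to identify the interior maximizer $x_\ell$, and then apply the envelope theorem to see that $c(\ell)=\ell x_\ell+(k-\ell)f(x_\ell)$ has derivative $x_\ell-f(x_\ell)$ in the continuous parameter $\ell$, so the discrete minimum sits at the integer(s) bracketing the value of $\ell$ for which $x_\ell=x(f)$; this is exactly the paper's argument, and your observation that $c'(\ell)$ is monotone (hence $c$ is convex and the bracketing step is legitimate) makes explicit a point the paper leaves implicit.
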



\begin{proof}
Since the compact convex domain $\Omega_f$ is defined by the graph of $f$, the formula for 
$c_k(X_f)$ can be simplified to 
\begin{equation*}
\label{ }
c_k(X_f) = \min_{\ell \in \{0,1,\dots,k\}}\left\{\max_{x \in [0,1]}\{\ell x+ (k-\ell)f(x)\}\right\}.
\end{equation*}
The assertion about $c_1(X_f)$ follows immediately. Assuming now that $k\geq2$, consider the partition
\begin{equation*}
\label{ }
\{0,1, \dots, k\}= \underbrace{\{0,k\}}_{A_k} \cup \underbrace{\{1, \dots, k-1\}}_{B_k}.
\end{equation*}
We then have
$$
\min_{\ell \in A_k}\left\{\max_{x \in [0,1]}\{\ell x+ (k-\ell)f(x)\}\right\}=\min\{ kf(0), k\} =k.
$$ 

 The condition that $f \in \mathcal{V} $ satifsies  $f''<0$ on $(0,1)$, implies that for $\ell$ in $B_k$ the function $$x \mapsto \ell x+ (k-\ell)f(x)$$ has a unique critical point at the point $x_{\ell}$ defined uniquely by 
the equation $f'(x) = \frac{-\ell}{k-\ell}$. Moreover,  $x_{\ell}$ is a global maximum. Hence, for $\ell \in B_k$ we have
\begin{equation*}
\label{ }
\max_{x \in [0,1]}\{\ell x+ (k-\ell)f(x)\} = \ell x_{\ell} + (k-\ell)f(x_{\ell}).
\end{equation*}

Now consider
$$
c(\ell)  = \ell x_{\ell} + (k-\ell)f(x_{\ell})
$$
as a function of a {\bf real} variable $\ell \in [1,k-1]$ where $x_{\ell}$ is now the smooth function defined implicitly by the equation $f'(x_{\ell})=\frac{-\ell}{k-\ell}$. We then have
\begin{equation}
\label{ell}
c'(\ell) = x_{\ell} - f(x_{\ell}).
\end{equation}
Hence, $c(\ell)$ takes its minimum value at the value of $\ell$ corresponding to the fixed point $x(f)$. 

Viewing $\ell$ as a {\bf discrete} variable in $\{1,2, \dots, k-1\}$,  the discussion above implies that  if $J_k$ is contained in $[1,k-1)$, then 
\begin{equation*}
\label{ }
\min_{\ell \in B_k}\left\{\max_{x \in [0,1]}\{\ell x+ (k-\ell)f(x)\}\right\} =  \min\left\{ J_k x_{J_k} + (k-J_k)f(x_{J_k}), (J_k+1)x_{J_k+1} + (k-J_k-1)f(x_{J_k+1}) \right\}.
\end{equation*}
On the other hand, if $J_k=0$, then \eqref{ell} implies that $c(\ell)$ is increasing (as a function on the discrete set $\{1,2, \dots, k-1\}$) and so 
\begin{equation*}
\label{ }
\min_{\ell \in B_k}\left\{\max_{x \in [0,1]}\{\ell x+ (k-\ell)f(x)\}\right\} =  x_{ 1} +(k- 1)f(x_1).
\end{equation*}
Similarly, if $J_k\geq k-1$, then \eqref{ell} implies that $c(\ell)$ is decreasing  and 
\begin{equation*}
\label{ }
\min_{\ell \in B_k}\left\{\max_{x \in [0,1]}\{\ell x+ (k-\ell)f(x)\}\right\} = (k-1) x_{k-1} +f(x_{k-1}).
\end{equation*}
Putting these observations together we arrive at the formula of Proposition \ref{messy}.
\end{proof}

We now apply Proposition \ref{messy} to the function $f_p(x) = a (1-x^p)^{\frac{1}{p}}$ defining  $E_p(1,a)$. We first  prove that for each $k$ the  capacity $c_k(E_p(1,a))$ looses sight of $a$ for all sufficiently large $p$.

\begin{lemma}\label{k}
For each $k \in \NN$ there is a $p(k) \in [1, \infty)$ such that $c_k(E_p(1,a)) =k$  for all $p\geq p(k)$.
\end{lemma}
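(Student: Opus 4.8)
The plan is to apply Proposition \ref{messy} to $f=f_p$ and show that the minimum defining $c_k(X_{f_p})$ is realized by the index $\ell=0$ (equivalently, that $F(k,f_p)\geq k$) once $p$ is large enough. By Proposition \ref{messy}, $c_k(E_p(1,a))=\min\{k,F(k,f_p)\}$, so it suffices to prove $F(k,f_p)\ge k$ for $p\ge p(k)$. Since $F(k,f_p)$ is in every case a convex combination of the form $\ell x_\ell + (k-\ell)f_p(x_\ell)$ for some $\ell\in\{1,\dots,k-1\}$ determined by $J_k$, it is enough to show the uniform bound
\begin{equation*}
\min_{\ell\in\{1,\dots,k-1\}}\ \max_{x\in[0,1]}\{\ell x + (k-\ell)f_p(x)\}\ \ge\ k
\end{equation*}
for all sufficiently large $p$.

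First I would record the pointwise behaviour of $f_p$ as $p\to\infty$: for each fixed $x\in[0,1)$, $f_p(x)=a(1-x^p)^{1/p}\to a$, and the convergence is monotone and uniform on compact subsets of $[0,1)$; moreover $f_p(x)\ge a(1-x^p)\to a$ in a way that is easy to make quantitative. Fix $\ell\in\{1,\dots,k-1\}$. Evaluating the linear-plus-$f_p$ expression at a well-chosen test point $x=t$ with $0<t<1$ gives $\ell t + (k-\ell)f_p(t)\ge \ell t + (k-\ell)a(1-t^p)$. Choosing $t=t_p\to 1^-$ slowly enough that $t_p^p\to 0$ (for instance $t_p = 1-1/p$, so $t_p^p\to e^{-1}$ — or $t_p=1-(\ln p)/p$ to kill the $t_p^p$ term entirely), this lower bound tends to $\ell\cdot 1 + (k-\ell)\cdot a \ge 1\cdot 1 + (k-1)a > k$, using $a>1$. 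Taking the minimum over the finitely many values $\ell\in\{1,\dots,k-1\}$ and using that each of finitely many limits exceeds $k$, there is a $p(k)$ past which every one of these maxima exceeds $k$; hence $F(k,f_p)\ge k$ and $c_k(E_p(1,a))=k$ for $p\ge p(k)$.

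The only point requiring care — and the one I would flag as the main obstacle — is making the test-point argument genuinely uniform in the index $\ell$ that gets selected and uniform over $p$, rather than picking a $t$ that depends badly on $p$. The clean way is to separate the two regimes: for $x$ bounded away from $1$, $f_p(x)\to a$ uniformly, so any fixed $t_0<1$ with $t_0 + (k-1)a(1-t_0^{p})> k$ for large $p$ already works and no $\ell$-dependent choice is needed (note $\ell t_0+(k-\ell)f_p(t_0)$ is minimized over $\ell$ at $\ell=k-1$ when $t_0<f_p(t_0)$, which holds for large $p$ since $f_p(t_0)\to a>1>t_0$ if $t_0<1$). So in fact a single fixed test point $t_0\in(x(f_2),1)$, say $t_0$ chosen with $t_0+(k-1)a> k$ and then $p$ large enough that $a(1-t_0^p)$ is within a small margin of $a$, handles all $\ell$ at once. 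This reduces the whole argument to the elementary inequality $t_0 + (k-1)a > k$, which is satisfied for any $t_0$ close enough to $1$ because $1+(k-1)a>k$ when $a>1$, and to the uniform convergence $f_p(t_0)\to a$, which is immediate from $(1-t_0^p)^{1/p}\to 1$. This gives the desired $p(k)$ and completes the proof.
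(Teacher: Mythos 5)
Your proposal is correct in substance and takes a genuinely cleaner route than the paper. The paper's proof computes $x_{k-1}(p)$ and $f_p(x_{k-1}(p))$ in closed form, identifies (via the $J_k$ bookkeeping) that for large $p$ the minimizing index is $\ell=k-1$, obtains the explicit expression $F(k,f_p)=\bigl(a^{\frac{p}{p-1}}+(k-1)^{\frac{p}{p-1}}\bigr)^{\frac{p-1}{p}}$, and then takes $p\to\infty$ to get $k-1+a>k$. You instead bypass the explicit optimization entirely: you lower-bound $\max_x\{\ell x+(k-\ell)f_p(x)\}$ by plugging in a single fixed test point $t_0$ close to $1$, and then minimize the resulting affine expression over $\ell$. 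Since for $p$ large $f_p(t_0)>t_0$, that affine function of $\ell$ is decreasing, so its minimum over $\ell\in\{1,\dots,k-1\}$ is attained at $\ell=k-1$ and equals $(k-1)t_0+f_p(t_0)\to (k-1)t_0+a$, which exceeds $k$ once $t_0$ is close enough to $1$ because $a>1$. This is more elementary (no need to solve the first-order condition), and the limit you land on, $(k-1)+a$, is exactly the paper's.

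One slip to fix: in your closing lines you write the target inequality as $t_0+(k-1)a>k$ and the limit as $1\cdot 1+(k-1)a$, which are the $\ell=1$ values — yet you correctly state earlier that the minimum is attained at $\ell=k-1$. The quantity your argument actually controls is $(k-1)t_0+f_p(t_0)$, and the relevant inequality is $(k-1)t_0+a>k$, equivalently $t_0>(k-a)/(k-1)$, which is indeed satisfied for $t_0$ close to $1$ since $a>1$. Both inequalities you wrote happen to be true, but only $(k-1)t_0+a>k$ is the one the argument needs, so the transcription should be corrected. With that fixed, the proof is complete.
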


\begin{proof} A simple computation  yields
\begin{equation*}
\label{ }
f'_p(x(f_p)) =-a^p.
\end{equation*}
For all $\ell \in [1,k-1]$ we also have
\begin{equation*}
\label{ }
x_{\ell}(p) = \frac{\left(\frac{\ell}{(k-\ell)a}\right)^{\frac{1}{p-1}}}{\left(1+\left(\frac{\ell}{(k-\ell)a}\right)^{\frac{p}{p-1}}\right)^{\frac{1}{p}}}
\end{equation*}
and
\begin{equation*}
\label{ }
f_p(x_{\ell}(p))=a\left(1+\left(\frac{\ell}{(k-\ell)a}\right)^{\frac{p}{p-1}}\right)^{-\frac{1}{p}}.
\end{equation*}

Consider a fixed $k \geq 2$. For $p$ near $1$, the capacity $c_k(E_p(1,a))$ is determined by which interval, say $I^k_{j_1}$, contains $-a$. If $j_1=k-1$, then $-a^p$ remains in $I^k_{k-1}$ as $p$ increases. Otherwise, $-a^p$ moves from $I^k_{j_1}$ to  $I^k_{j_1+1}$ and continues in this way until it enters  $I^k_{k-1}$ for good. In either case, for $p$ sufficiently large we  have 
\begin{eqnarray*}
F(k,f_p) & = & (k-1)x_{k-1}+ f_p(x_{k-1})\\
{} & = & \left(a^{\frac{p}{p-1}}+\left(k-1\right)^{\frac{p}{p-1}}\right)^{\frac{p-1}{p}}.
\end{eqnarray*} 
Lemma \ref{k}, now follows from Proposition \ref{messy} (see \eqref{form}) and the fact that
\begin{equation*}
\label{ }
\lim_{p\to \infty}F(k,f_p) =k-1+a>k.
\end{equation*}

\end{proof}

Figure \ref{e2p} illustrates the formula from Proposition \ref{messy} and the behavior described in  Lemma \ref{k} when $k=123$ and $a$ is equal to Euler's number, $e$.
\begin{figure}[!h]
\centering
\caption{The graph of  $p \mapsto c_{123}(E_p(1,e))$.}
\bigskip
  \includegraphics[width=9cm]{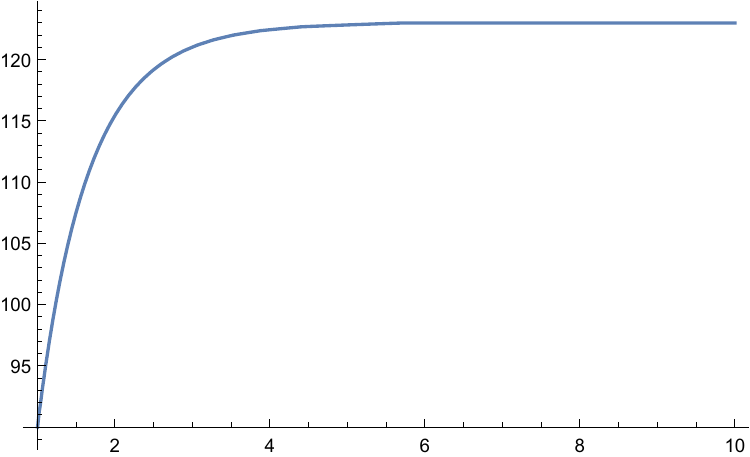}
  \label{e2p}
\end{figure}

Next we prove that  $a$ is visible to the collection of capacities, $\{c_k(E_p(1,a))\}_{k \in \NN}$ for all $p<\infty$.

\begin{lemma}\label{p}
 For each $p \in (1, \infty)$ there is a $k(p) \in \NN$ such that 
 $c_k(E_p(1,a))$  depends on $a$ for all $k\geq k(p)$.
\end{lemma}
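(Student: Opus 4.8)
The goal is to show that for fixed $p \in (1,\infty)$ the capacity $c_k(E_p(1,a))$ depends nontrivially on $a$ once $k$ is large enough. By Proposition \ref{messy}, $c_k(E_p(1,a)) = \min\{k, F(k,f_p)\}$, so it suffices to produce arbitrarily large $k$ for which $F(k,f_p) < k$ and $F(k,f_p)$ varies with $a$. The natural candidate is to look at the term in the definition of $F(k,f)$ governed by the fixed point: since $f'_p(x(f_p)) = -a^p$ is a constant independent of $k$, the integer $J_k = J_k(f_p)$ — determined by $f'_p(x(f_p)) \in I^k_{J_k}$ — grows linearly in $k$. Indeed $I^k_j = \left(\frac{-j-1}{k-j-1}, \frac{-j}{k-j}\right]$, and $-a^p$ lands in the interval whose endpoints straddle $-a^p$, which forces $J_k/(k-J_k) \to a^p$, i.e. $J_k \sim \frac{a^p}{1+a^p}k$. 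So for large $k$ we are in the middle regime $1 \le J_k < k-1$, and $x_{J_k}(p)$ is close to the fixed point $x(f_p)$.

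**Main computation.** In this regime $F(k,f_p) \le J_k x_{J_k}(p) + (k - J_k) f_p(x_{J_k}(p))$, and using the explicit formulas for $x_\ell(p)$ and $f_p(x_\ell(p))$ from the proof of Lemma \ref{k} with $\ell = J_k$, one checks that as $k \to \infty$ this quantity is asymptotic to $k \cdot x(f_p) \cdot (\text{something})$ — more precisely, since $c(\ell) = \ell x_\ell + (k-\ell)f_p(x_\ell)$ is minimized (over real $\ell$) at the value corresponding to the fixed point, and at that value the expression equals $k \cdot x(f_p) \cdot \frac{?}{?}$; the cleanest route is: the minimum of $c(\ell)$ over $\ell \in [1,k-1]$ is at most $c(\ell^*)$ where $x_{\ell^*} = x(f_p)$, giving value $\ell^* x(f_p) + (k-\ell^*) f_p(x(f_p)) = k\, x(f_p)$ (using $f_p(x(f_p)) = x(f_p)$). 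Since $x(f_p) = x(f_p)$ and one computes $x(f_p) < 1$ strictly (indeed $x(f_p) = a/(1+a^p)^{1/p} \cdot (\dots)$; in any case it is the unique fixed point of a strictly concave function with $f_p(0) = a > 1$, so $x(f_p) < 1$), we get $F(k,f_p) \le k\,x(f_p) + o(k) < k$ for all large $k$. Hence $c_k(E_p(1,a)) = F(k,f_p)$ for $k \ge k(p)$.

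**Dependence on $a$.** It then remains to argue that $F(k,f_p)$, for these large $k$, genuinely depends on $a$. The simplest argument: differentiate, or compare two values $a_1 \ne a_2$. Fix large $k$; then $c_k(E_p(1,a)) = F(k,f_p)$ which is a minimum of one or two explicit expressions in $a$ built from $x_{J_k}(p), x_{J_k+1}(p)$ and their $f_p$-values. Each such expression is real-analytic in $a$ on $(1,\infty)$ and non-constant (for instance because $c_k(E_p(1,a)) \ge c_k(E(1,a)) = (\mathrm{Sort}[\NN \cup a\NN])[[k]]$ by monotonicity of $p \mapsto E_p(1,a)$ — the domains are nested increasingly in $p$ — wait, one should double-check the direction of nesting, but in any case a crude lower bound growing with $a$ suffices, e.g. $c_k \ge c_1 \cdot$(something), or directly: $E_p(1,a) \supseteq E_p(1,a')$ for $a > a'$ by inclusion of the defining sets, so $c_k$ is nondecreasing in $a$, and if it were eventually constant in $a$ on an interval it would contradict, say, that $c_{kn}$-type values or the volume-sensitive lower bounds grow). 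The cleanest finish is: $a \mapsto c_k(E_p(1,a))$ is monotone nondecreasing (nested domains) and piecewise real-analytic, so if it were constant on any open $a$-interval it would be globally constant there; but $c_k(E_p(1,a)) \to$ different limits is easy to rule out — instead, simply exhibit $\tfrac{\partial}{\partial a}$ of the dominant term as nonzero, which follows from $c'(\ell) = x_\ell - f_p(x_\ell) \ne 0$ away from the fixed point combined with $\partial x_\ell/\partial a \ne 0$.

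**Expected obstacle.** The genuine difficulty is the last step: converting "$F(k,f_p)$ is an explicit non-obviously-constant function of $a$" into a clean proof that it is \emph{not} constant, uniformly for all large $k$. The asymptotic $F(k,f_p) \sim k\,x(f_p)$ would be clean but $x(f_p)$ itself depends on $a$, so morally we are done — the leading coefficient $x(f_p)$ varies with $a$ — but making this rigorous requires controlling the $o(k)$ error uniformly and checking $x(f_p)$ is strictly monotone in $a$ (which is immediate from the fixed-point equation $a(1-x^p)^{1/p} = x$, implicit differentiation giving $\partial x(f_p)/\partial a > 0$). I would organize the proof so that the $a$-dependence is extracted from the leading term $k\,x(f_p)$, reducing everything to: (i) $J_k$ is in the middle range for large $k$, (ii) $F(k,f_p) = k\,x(f_p) + O(1)$ uniformly, and (iii) $x(f_p)$ strictly increases in $a$ — so that for any $a_1 < a_2$, $c_k(E_p(1,a_2)) - c_k(E_p(1,a_1)) \ge k(x(f_{p,a_2}) - x(f_{p,a_1})) - O(1) > 0$ for $k$ large.
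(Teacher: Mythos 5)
Your argument is correct in substance, and it takes a genuinely different route from the paper. The paper proves $F(k,f_p)<k$ for large $k$ by the crude estimate $F(k,f_p)\le c(k-1)=\bigl(a^{p/(p-1)}+(k-1)^{p/(p-1)}\bigr)^{(p-1)/p}$, compares this directly with $k$, and then reads off the $a$-dependence from the closed-form expression $c_k(E_p(1,a))=\bigl((a(k-m))^{p/(p-1)}+m^{p/(p-1)}\bigr)^{(p-1)/p}$, which is manifestly increasing in $a$. You instead work with the \emph{continuous} minimizer $\ell^*$ of $c(\ell)$, where $x_{\ell^*}=x(f_p)$ and $c(\ell^*)=k\,x(f_p)$, show $J_k=\lfloor\ell^*\rfloor$, and deduce $F(k,f_p)=k\,x(f_p)+O(1)$; since $x(f_p)=a/(1+a^p)^{1/p}<1$ this gives $F<k$, and since $x(f_p)$ is strictly increasing in $a$ the leading term forces $a$-dependence. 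Your route isolates the $a$-dependence in a single monotone scalar $x(f_p)$ rather than an expression with a floating index $m$, which is cleaner for the dependence step; the paper's route avoids asymptotics and yields the explicit formula. One presentational slip: you write that the minimum of $c(\ell)$ over $\ell\in[1,k-1]$ is \emph{at most} $c(\ell^*)$, but for the integer minimum $F(k,f_p)$ the inequality goes the other way, $F(k,f_p)\ge c(\ell^*)=k\,x(f_p)$; what you actually need and later use is $F(k,f_p)\le c(J_k)\le c(\ell^*)+O(1)$ (the $O(1)$ being at most $\max_{\ell}|x_\ell-f_p(x_\ell)|\le 1+a$, since $|J_k-\ell^*|<1$). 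With that wording fixed, the argument is complete.
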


\begin{proof} Suppose that
\begin{equation}
\label{in}
a^p<k-1.
\end{equation} 
Then  $-a^p$ lies in some some $I^k_j$ for $j \leq k-2$ and  
\begin{equation*}
\label{ }
F(k,f_p) =\left((a(k-m))^{\frac{p}{p-1}}+m^{\frac{p}{p-1}}\right)^{\frac{p-1}{p}}
\end{equation*}
for some $m\geq 1$. It follows from the proof of Proposition \ref{messy} that 
\begin{eqnarray*}
F(k,f_p) &\leq & (k-1)x_{k-1} + f_p(x_{k-1}) \\
{} & = & \left(a^{\frac{p}{p-1}}+(k-1)^{\frac{p}{p-1}}\right)^{\frac{p-1}{p}}
\end{eqnarray*}
Hence, one has  $F(k,f_p) <k$ if 
\begin{equation*}
\label{under}
a^p < \left(k^{\frac{p}{p-1}} -(k-1)^{\frac{p}{p-1}}\right)^{p-1}.
\end{equation*}
The right hand side goes to infinity as $k$ does. 
%
%
%
%
%
%
With this, Proposition \ref{messy} implies that if  $k$ is large enough for inequalities \eqref{in} and \eqref{under} to both hold, then  
\begin{equation*}
\label{ }
c_k(E_p(1,a)) = \left((a(k-m))^{\frac{p}{p-1}}+m^{\frac{p}{p-1}}\right)^{\frac{p-1}{p}}
\end{equation*}
for some $1\leq m\leq k-1$.

\end{proof}

\section{Symmetry and collapse}\label{collapse} Here we present the proofs of Theorem
\ref{balance} and Theorem  \ref{balance-c}. Recall that  $\Omega \subset \RR^n_{\geq 0}$ is {\em symmetric} if
$$
(x_1, \dots, x_n) \in \Omega  \implies (x_{\sigma(1)}, \dots, x_{\sigma(n)}) \in \Omega
$$
for any permutation $\sigma \in S_n$.
\subsection{Proof of Theorem
\ref{balance}}
We must show that if $\Omega \subset \RR^n_{\geq 0}$ is symmetric and $X_{\Omega}$ is convex, then 
\begin{equation*}
\label{ }
c_k(X_{\Omega}) = \|V(k,n)\|_{\Omega}
\end{equation*}
where 
\begin{equation*}
\label{ }
V(k,n) = \left( \left\lfloor \frac{k}{n} \right\rfloor, \dots,  \left\lfloor \frac{k}{n}\right \rfloor,  \underbrace{\left\lceil \frac{k}{n}\right\rceil , \dots, \left\lceil \frac{k}{n}\right\rceil}_{k\mod n} \right).
\end{equation*}

We say that an $n$-tuple $y=(y_1, \dots y_n) \in \RR^n$ is {\em ordered} if  
$$y_1 \leq y_2 \leq \dots \leq y_n.$$
Since $\Omega$ is symmetric,  we have 
\begin{equation}
\label{inv}
\|(v_1, \dots v_n)\|_{\Omega} = \|(v_{\sigma(1)}, \dots v_{\sigma(n)})\|_{\Omega}
\end{equation}
for all $\sigma \in S_n$. This allows us to rewrite the expression for $c_k (X_{\Omega})$, as 
\begin{equation*}
\label{ }
c_k (X_{\Omega}) = \min \setc*{ \|v\|_{\Omega}}{ v \in \vec{C}(k,n)}
\end{equation*}
where
$$\vec{C}(k,n) = \setc*{ v \in \ZZ^n_{\geq 0}}{\sum_{i=1}^n v_i=k,\, v \text{ is  ordered}}.$$

\begin{lemma}\label{exist}
For every $v \in \vec{C}(k,n)$ there exists an ordered $w= (w_1, \dots, w_n) \in \Omega$ such that
$\|v\|_{\Omega} = \langle v,w\rangle.$
\end{lemma}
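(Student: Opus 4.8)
The plan is to exploit the symmetry of $\Omega$ to show that the max defining $\|v\|_\Omega$ is attained at a point of $\Omega$ that is ordered \emph{in the same way} as $v$. Since $v \in \vec{C}(k,n)$ is ordered, $v_1 \le \dots \le v_n$, I want to produce an ordered maximizer $w$.

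First I would fix $v \in \vec{C}(k,n)$ and, using compactness of $\Omega$ (which holds since $X_\Omega$ is convex, hence $\hat\Omega$ and a fortiori $\Omega$ are compact), pick any $w^0 \in \Omega$ achieving $\|v\|_\Omega = \langle v, w^0 \rangle$. The key observation is the rearrangement inequality: for the fixed nondecreasing tuple $v$, among all permutations $\sigma \in S_n$ the inner product $\langle v, (w^0_{\sigma(1)}, \dots, w^0_{\sigma(n)}) \rangle$ is maximized when the tuple $(w^0_{\sigma(1)}, \dots, w^0_{\sigma(n)})$ is sorted in nondecreasing order. By symmetry of $\Omega$, every such permuted tuple again lies in $\Omega$, so the ordered rearrangement $w$ of $w^0$ lies in $\Omega$ and satisfies $\langle v, w \rangle \ge \langle v, w^0 \rangle = \|v\|_\Omega$. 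Since $w \in \Omega$, the reverse inequality $\langle v, w \rangle \le \|v\|_\Omega$ holds by definition of $\|v\|_\Omega$, so $\langle v, w \rangle = \|v\|_\Omega$ and $w$ is ordered, as desired.

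The only mild subtlety — and I expect it to be the main (and still very minor) obstacle — is justifying that the permuted/sorted tuple stays in $\Omega$: this is exactly the symmetry hypothesis applied to $w^0$, together with the fact that sorting is achieved by some permutation $\sigma \in S_n$. One should also note $w^0 \in \RR^n_{\ge 0}$, so its rearrangement is again in $\RR^n_{\ge 0}$, keeping us inside $\Omega \subset \RR^n_{\ge 0}$. No continuity or smoothness of $\partial\Omega$ is needed here; compactness alone gives the maximizer. Thus the lemma follows immediately from the rearrangement inequality and the definition of symmetry.
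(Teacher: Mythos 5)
Your proof is correct and follows essentially the same strategy as the paper's: the paper orders a maximizer $\tilde w$ by repeatedly applying transpositions and checking $(v_i - v_j)(\tilde w_j - \tilde w_i) \ge 0$, which is just an inline proof of the rearrangement inequality you invoke directly. Citing the rearrangement inequality is a clean shortcut, and your attention to the fact that symmetry keeps the sorted tuple inside $\Omega$ is exactly the point the paper uses as well.
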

\begin{proof}
Choose $\tilde{w}=(\tilde{w}_1, \dots, \tilde{w}_n)$ in $\Omega$  such that $\|v\|_{\Omega} =\langle v,\tilde{w}\rangle.$ If $\tilde{w}$ is ordered we are done. Otherwise,  $\tilde{w}_i > \tilde{w}_j$ for some $1 \leq i < j \leq n$ . Let $\tau \in S_n$ be the transposition $\tau = (i \;  j)$, and set $\tilde{w}_\tau = (\tilde{w}_{\tau(1)}, \dots , \tilde{w}_{\tau(n)})$. Since $\Omega$ is symmetric $\tilde{w}_\tau$ is in $\Omega$ and  
\begin{equation}
\begin{split}
\langle v,\tilde{w}_{\tau}\rangle- \langle v,\tilde{w}\rangle
& = \ (v_{i}\tilde{w}_{j} + v_{j}\tilde{w}_{i})-(v_{i}\tilde{w}_{i} + v_{j}\tilde{w}_{j}) \\
&= \ (v_i - v_j)(\tilde{w}_j - \tilde{w}_i) \\
& \geq 0. 
\end{split}
\end{equation}
As $\langle v,\tilde{w}\rangle$ is already realizing the maximum, $\|v\|_{\Omega}$, this inequality must be an equality. Hence $\|v\|_{\Omega} = \langle v,\tilde{w}_{\tau}\rangle.$ Proceeding in this manner we can continue to order $\tilde{w}$ and obtain the desired
$w \in \Omega$.
\end{proof}

For $v \in \vec{C}(k,n)$, set
\begin{equation*}
\label{ }
\Delta(v) = v_n(\#\{v_i \mid v_i=v_n\}) - v_1(\#\{v_i \mid v_i=v_1\}).
\end{equation*}
Consider the {\em transfer} map $\mathcal{T} \colon \vec{C}(k,n) \to \vec{C}(k,n)$ defined by
$$v=(\underbrace{v_1, \dots v_1}_{t-times}, \dots , \underbrace{v_n, \dots v_n}_{T-times}) \mapsto  
\begin{cases}
      (\underbrace{v_1, \dots v_1}_{(t-1)-times}, v_1+1 \dots , v_n-1, \underbrace{v_n, \dots v_n}_{(T-1)-times}) &\text{ if $v_n >v_1+1$}, \\
      v &  \text{ otherwise}.
\end{cases}
$$
Note that the vector $V(k,n)$ in the statement of Theorem  \ref{balance} is the unique fixed point of $\mathcal{T}$ and that $\Delta(\mathcal{T}(v)) \leq \Delta(v)$ with equality if and only if $v=V(k,n)$. Hence, 
\begin{equation}
\label{fix}
\mathcal{T}^j(v)= V(k,n) \text{ for all sufficiently large } j.
\end{equation}

\begin{lemma}
\label{transfer}
For all $v \in \vec{C}(k,n)$ we have  $\|\mathcal{T}(v)\|_{\Omega}  \leq \|v\|_{\Omega} $.
\end{lemma}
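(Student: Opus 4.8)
\textbf{Proof proposal for Lemma \ref{transfer}.}

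The plan is to compare the two maxima $\|v\|_{\Omega}$ and $\|\mathcal{T}(v)\|_{\Omega}$ by using a single well-chosen point of $\Omega$ that computes the first. Concretely, I would first invoke Lemma \ref{exist} to produce an \emph{ordered} $w=(w_1,\dots,w_n)\in\Omega$ with $\|v\|_{\Omega}=\langle v,w\rangle$; the fact that $w$ can be taken ordered is exactly what makes this work. If $\mathcal{T}(v)=v$ there is nothing to prove, so assume $v_n>v_1+1$, write $v=(\underbrace{v_1,\dots,v_1}_{t},\dots,\underbrace{v_n,\dots,v_n}_{T})$, and let $p=t$ be the index of the last entry equal to $v_1$ and $q=n-T+1$ be the index of the first entry equal to $v_n$ (so $p<q$ since $v_1<v_n$). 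Then $\mathcal{T}(v)$ differs from $v$ only in coordinates $p$ and $q$, where it increments and decrements by $1$ respectively.

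The key step is the estimate
\begin{equation*}
\|\mathcal{T}(v)\|_{\Omega}\ \geq\ \langle \mathcal{T}(v),w\rangle \quad\text{is the wrong direction};
\end{equation*}
instead I want an \emph{upper} bound on $\|\mathcal{T}(v)\|_{\Omega}$, so I would pick an ordered $w'\in\Omega$ with $\|\mathcal{T}(v)\|_{\Omega}=\langle \mathcal{T}(v),w'\rangle$ (again via Lemma \ref{exist}), and then bound $\langle \mathcal{T}(v),w'\rangle$ above by something involving $\|v\|_{\Omega}$. Since $\mathcal{T}(v)=v+e_p-e_q$ with $p<q$, we get $\langle\mathcal{T}(v),w'\rangle=\langle v,w'\rangle+(w'_p-w'_q)\leq \langle v,w'\rangle\leq\|v\|_{\Omega}$, where the first inequality uses that $w'$ is ordered (so $w'_p\leq w'_q$) and the second is the definition of $\|v\|_{\Omega}$ as a maximum over $\Omega$. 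This gives $\|\mathcal{T}(v)\|_{\Omega}\leq\|v\|_{\Omega}$ directly. Note that the point computing $\|\mathcal{T}(v)\|_{\Omega}$ must be chosen ordered, which is legitimate precisely because $\mathcal{T}(v)\in\vec C(k,n)$ is itself ordered, so Lemma \ref{exist} applies to it.

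The main subtlety — what I'd treat carefully rather than the arithmetic — is making sure that the transferred vector is still a legitimate element of $\vec C(k,n)$ (nonnegative, sums to $k$, and \emph{ordered}), so that Lemma \ref{exist} is applicable to $\mathcal{T}(v)$: nonnegativity is clear since $v_1\geq 0$ forces $v_1+1\geq 1$ and $v_n>v_1+1\geq 1$ forces $v_n-1\geq 1$; the sum is preserved since we add $1$ in one slot and subtract $1$ in another; and orderedness needs the observation that after the operation the $p$-th entry is $v_1+1\leq v_n-1$ (using $v_n>v_1+1$, i.e. $v_n\geq v_1+2$) and still $\geq v_1$, while the $q$-th entry $v_n-1$ is still $\geq v_1+1$ and $\leq v_n$, and the entries strictly between $p$ and $q$ already lie in $[v_1,v_n]$. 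Once this bookkeeping is in place, the inequality above closes the argument. Combining Lemma \ref{transfer} with \eqref{fix} will then, in the main proof of Theorem \ref{balance}, force $\|V(k,n)\|_\Omega\leq\|v\|_\Omega$ for every $v\in\vec C(k,n)$, while the reverse inequality $c_k(X_\Omega)\leq\|V(k,n)\|_\Omega$ is immediate from $V(k,n)\in\vec C(k,n)$.
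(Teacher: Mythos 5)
Your proof is correct and takes essentially the same approach as the paper: both choose an ordered $w'$ computing $\|\mathcal{T}(v)\|_{\Omega}$ via Lemma \ref{exist} and exploit $w'_p\leq w'_q$ for $p<q$ together with $\langle v,w'\rangle\leq\|v\|_{\Omega}$. The paper phrases the estimate as a lower bound on the difference $\|v\|_{\Omega}-\|\mathcal{T}(v)\|_{\Omega}$, but it is the same comparison; your extra check that $\mathcal{T}(v)$ is still ordered (so Lemma \ref{exist} applies) is a sound detail the paper leaves implicit in the codomain of $\mathcal{T}$.
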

\begin{proof}
By Lemma \ref{exist} there is an ordered $w \in \Omega$ such that  $\|\mathcal{T}(v)\|_{\Omega} = \langle \mathcal{T}(v),w \rangle$. We then have 
\begin{equation}
\begin{split}
\|v\|_{\Omega} - \|\mathcal{T}(v) \|_{\Omega} 
& \geq \ \langle v, w \rangle - \langle \mathcal{T}(v), w  \rangle \\
& = \ (v_{1}w_{t} + v_{n}w_{n-T}) - ((v_{1} + 1) w_{t} + (v_{n} - 1)w_{n-T}) \\
&= \ w_{n-T} - w_t\\
\end{split}
\end{equation}
which is nonnegative since $w$ is ordered.
\end{proof}

Since  $c_k(X_{\Omega}) = \|v\|_{\Omega}$  for some $v$ in $\vec{C}(k,n)$, Lemma \ref{transfer} implies that $$c_k(X_{\Omega}) = \|\mathcal{T}^j(v)\|_{\Omega}$$ for any $j \in \NN$.  It then follows from \eqref{fix} that  
$c_k(X_{\Omega}) = \|V(k,n)\|_{\Omega}$. This completes the proof of Theorem  \ref{balance}.

\subsection{Proof of Theorem \ref{balance-c}}
For a symmetric  $\Omega \subset \RR^n_{\geq 0}$ such that $X_{\Omega}$ is concave, we must show that  
\begin{equation*}
\label{ }
c_k(X_{\Omega}) = [\check{V}(k,n)]_{\Omega}
\end{equation*}
where 
\begin{equation*}
\label{ }
\check{V}(k,n) = \left(   \underbrace{\left\lceil \frac{k+n-1}{n}\right\rceil , \dots, \left\lceil \frac{k+n-1}{n}\right\rceil}_{k+n-1 \mod n},\left\lfloor \frac{k+n-1}{n} \right\rfloor, \dots,  \left\lfloor \frac{k+n-1}{n}\right \rfloor\right).
\end{equation*}

As the proof is very similar to that of Theorem  \ref{balance} it is presented in less detail. A vector $y=(y_1, \dots y_n) \in \RR^n$ is said to be  {\em ordered backwards} if  
$$y_1 \geq y_2 \geq \dots \geq y_n.$$ For symmetric concave domain $X_{\Omega}$ we then have 
\begin{equation*}
\label{ }
c_k (X_{\Omega}) = \max \setc*{ [v]_{\Omega}}{ v \in \cev{C}(k,n)},
\end{equation*}
where $\cev{C}(k,n) = \setc*{v \in \NN^n}{ \sum_{i=1}^n v_i=k+n-1,\, v \text{ is ordered backwards}}.$

\begin{lemma}
\label{real-c}
For every $v \in \cev{C}(k,n)$ there exists an ordered $w= (w_1, \dots, w_n) \in \Omega$ such that
$[v]_{\Omega} = \langle v,w\rangle.$
\end{lemma}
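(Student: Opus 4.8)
The plan is to mirror the argument used for Lemma \ref{exist} in the convex case, adapting it to the fact that here we minimize $\langle v,w\rangle$ over $w\in\Omega$ rather than maximize, and that the vectors $v\in\cev{C}(k,n)$ are ordered backwards rather than forwards. First I would pick any $\tilde w=(\tilde w_1,\dots,\tilde w_n)\in\Omega$ achieving the minimum, so that $[v]_\Omega=\langle v,\tilde w\rangle$. Since $X_\Omega$ is concave, $\Omega$ is compact, so such a minimizer exists; and again by symmetry of $\Omega$ any coordinate permutation of $\tilde w$ lies in $\Omega$.

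The heart of the matter is to show that one can permute $\tilde w$ into an ordered $w$ without increasing $\langle v,\tilde w\rangle$. Suppose $\tilde w$ is not ordered (in the forward sense $w_1\le\cdots\le w_n$), so there are indices $1\le i<j\le n$ with $\tilde w_i>\tilde w_j$. Let $\tau=(i\ j)$ and $\tilde w_\tau=(\tilde w_{\tau(1)},\dots,\tilde w_{\tau(n)})$. Since $v$ is ordered backwards we have $v_i\ge v_j$, hence
\begin{equation*}
\langle v,\tilde w_\tau\rangle-\langle v,\tilde w\rangle
= (v_i\tilde w_j+v_j\tilde w_i)-(v_i\tilde w_i+v_j\tilde w_j)
= (v_i-v_j)(\tilde w_j-\tilde w_i)\le 0.
\end{equation*}
Because $\langle v,\tilde w\rangle$ already realizes the minimum $[v]_\Omega$, this difference must in fact be zero, so $[v]_\Omega=\langle v,\tilde w_\tau\rangle$ as well. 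Iterating this transposition procedure (finitely many times, e.g. by a bubble-sort argument) produces the desired ordered $w\in\Omega$ with $[v]_\Omega=\langle v,w\rangle$.

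The only point requiring a little care — and the step I expect to be the mildest obstacle — is making sure the sign bookkeeping is consistent: in the concave case the relevant vectors $v$ are ordered backwards while we still want $w$ ordered forwards, and the sign of the quadratic exchange term flips relative to the convex case precisely because we are now at a minimum rather than a maximum, so the two sign reversals conspire to keep the inequality going the right way. Once Lemma \ref{real-c} is in hand, the proof of Theorem \ref{balance-c} proceeds exactly as for Theorem \ref{balance}: one introduces the backward analogue of the transfer map $\mathcal T$ whose unique fixed point on $\cev{C}(k,n)$ is $\check V(k,n)$, shows that applying it does not increase $[v]_\Omega$ (using the ordered $w$ from Lemma \ref{real-c} and the fact that $w$ is ordered forwards), and iterates to collapse any optimal $v$ to $\check V(k,n)$.
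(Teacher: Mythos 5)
Your proposal is correct and takes essentially the same approach as the paper: choose a minimizer $\tilde w$, swap out-of-order coordinates via a transposition, observe that the exchange term $(v_i-v_j)(\tilde w_j-\tilde w_i)\le 0$ (since $v$ is ordered backwards), and conclude it must vanish because $\tilde w$ is already a minimizer. The bookkeeping you flag as the potential pitfall is handled exactly as the paper does.
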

\begin{proof}
Choose $\tilde{w}=(\tilde{w}_1, \dots, \tilde{w}_n)$ in $\Omega$  such that $[v]_{\Omega} =\langle v,\tilde{w}\rangle.$ If $\tilde{w}$ is ordered we are done. Otherwise,  $\tilde{w}_i > \tilde{w}_j$ for some $1 \leq i < j \leq n$ . Let $\tau \in S_n$ be the transposition $\tau = (i \;  j)$, and set $\tilde{w}_\tau = (\tilde{w}_{\tau(1)}, \dots , \tilde{w}_{\tau(n)})$. We then have 
\begin{equation}
\begin{split}
\langle v,\tilde{w}_{\tau}\rangle- \langle v,\tilde{w}\rangle
&= \ (v_i - v_j)(\tilde{w}_j - \tilde{w}_i) \\
& \leq 0 
\end{split}
\end{equation}
Since $\langle v,\tilde{w}\rangle$ is already realizing the minimum, $[v]_{\Omega}$, this inequality must be an equality. Hence $\|v\|_{\Omega} = \langle v,\tilde{w}_{\tau}\rangle$ and we can proceed in this manner to obtain the desired ordered
$w \in \Omega$
\end{proof}

Now define the {\em backwards transfer} map $\mathcal{B} \colon \cev{C}(k,n) \to \cev{C}(k,n)$ by
$$v=(\underbrace{v_1, \dots v_1}_{t-times}, \dots , \underbrace{v_n, \dots v_n}_{T-times}) \mapsto  
\begin{cases}
      (\underbrace{v_1, \dots v_1}_{(t-1)-times}, v_1-1 \dots , v_n+1, \underbrace{v_n, \dots v_n}_{(T-1)-times}) &\text{ if $v_1 >v_n+1$}, \\
      v &  \text{ otherwise}.
\end{cases}
$$
We then have $\mathcal{B}^j(v)= \check{V}(k,n)$ for all sufficiently large $j$.

\begin{lemma}
\label{transfer-c}
For all $v \in \cev{C}(k,n)$ we have  $[\mathcal{B}(v)]_{\Omega}  \geq [v]_{\Omega} $.
\end{lemma}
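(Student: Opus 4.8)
The plan is to mirror the proof of Lemma \ref{transfer} with the inequalities reversed, using Lemma \ref{real-c} in place of Lemma \ref{exist}. Given $v \in \cev{C}(k,n)$, if $v_1 \leq v_n + 1$ then $\mathcal{B}(v) = v$ and the inequality is trivially an equality, so assume $v_1 > v_n + 1$. Write $v = (\underbrace{v_1,\dots,v_1}_{t},\dots,\underbrace{v_n,\dots,v_n}_{T})$, so that $\mathcal{B}(v)$ differs from $v$ only in position $t$ (where the entry drops from $v_1$ to $v_1 - 1$) and position $n-T+1$ (where the entry rises from $v_n$ to $v_n + 1$). Note $\mathcal{B}(v)$ is still ordered backwards because $v_1 - 1 \geq v_n + 1 > v_n$ and, if $t \geq 2$, $v_1 - 1 < v_1$; similarly at the other end.

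First I would apply Lemma \ref{real-c} to $\mathcal{B}(v)$ to obtain an ordered $w = (w_1,\dots,w_n) \in \Omega$ with $[\mathcal{B}(v)]_{\Omega} = \langle \mathcal{B}(v), w\rangle$. Since $[v]_{\Omega}$ is, by definition, the minimum of $\langle v, \cdot\rangle$ over $\Omega$, we have $[v]_{\Omega} \leq \langle v, w\rangle$. Then I would compute the difference:
\begin{equation*}
[\mathcal{B}(v)]_{\Omega} - [v]_{\Omega} \;\geq\; \langle \mathcal{B}(v), w\rangle - \langle v, w\rangle \;=\; \bigl((v_1 - 1)w_t + (v_n + 1)w_{n-T+1}\bigr) - \bigl(v_1 w_t + v_n w_{n-T+1}\bigr) \;=\; w_{n-T+1} - w_t.
\end{equation*}
Because $t \leq n - T + 1$ (the block of maximal entries comes before the block of minimal entries, and they are disjoint since $v_1 > v_n+1$) and $w$ is ordered increasingly, we get $w_{n-T+1} - w_t \geq 0$, which gives $[\mathcal{B}(v)]_{\Omega} \geq [v]_{\Omega}$, as claimed.

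The only genuinely delicate point — and the one I would be most careful about — is the bookkeeping of indices: ensuring that the two entries changed by $\mathcal{B}$ sit at positions $t$ and $n-T+1$ with $t < n-T+1$, so that the sign of $w_{n-T+1} - w_t$ comes out right from the ordering of $w$. This is where the hypothesis $v_1 > v_n + 1$ is used, since it guarantees the top and bottom blocks do not overlap (indeed $t + T \leq n$). Everything else is a direct transcription of the convex argument with $\min$ and $\max$ interchanged and the inequalities flipped. Once Lemma \ref{transfer-c} is in hand, the proof of Theorem \ref{balance-c} concludes exactly as in the convex case: $c_k(X_{\Omega}) = [v]_{\Omega}$ for some $v \in \cev{C}(k,n)$, iterating $\mathcal{B}$ cannot decrease this value, and since $\mathcal{B}^j(v) = \check{V}(k,n)$ for $j$ large, we obtain $c_k(X_{\Omega}) = [\check{V}(k,n)]_{\Omega}$.
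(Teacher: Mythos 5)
Your proof is correct and is essentially the paper's own argument: apply Lemma \ref{real-c} to $\mathcal{B}(v)$ to get an ordered $w$, compare $\langle v,w\rangle$ with $\langle\mathcal{B}(v),w\rangle$, and use the ordering of $w$ to sign the difference. (If anything, your index $n-T+1$ for the first entry of the $v_n$-block is slightly more precise than the paper's $n-T$, but since $w$ is ordered and $t\le n-T$, both indexings give the same sign and conclusion.)
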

\begin{proof}
By Lemma \ref{real-c} there is an ordered $w \in \Omega$ such that   $[\mathcal{B}(v)]_{\Omega} = \langle \mathcal{B}(v),w \rangle$. Hence,
\begin{equation}
\begin{split}
[v]_{\Omega} - [\mathcal{B}(v) ]_{\Omega} 
& \leq \ \langle v, w \rangle - \langle \mathcal{B}(v), w  \rangle \\
& = \ (v_{1}w_{t} + v_{n}w_{n-T}) - ((v_{1} - 1) w_{t} + (v_{n} + 1)w_{n-T}) \\
&= \ w_{t} - w_{n-T}\\
\end{split}
\end{equation}
which is nonpositive since $w$ is ordered.
\end{proof}

Finally, given $c_k(X_{\Omega}) = [v]_{\Omega}$, Lemma \ref{transfer-c} implies that, for all sufficiently large $j$, $$c_K(X_{\Omega}) = [\mathcal{B}^j(v)]_{\Omega}= [\check{V}(k,n)]_{\Omega}.$$

\section{New capacity computations}\label{examples}

In this section we use the formulas  of Propositions \ref{balance} and \ref{balance-c} to derive explicit formulas for the Gutt-Hutchings capacities in several new examples. 

\subsection{Graphs for $n=2$.} We start with simple but illuminating case when the domain $\Omega$ is  defined by the graph of a nice function of one variable. Let $g \colon [0,\lambda] \to \RR_{\geq 0}$ be a piecewise-smooth nonincreasing function such that $g(0)>0$ and $g(\lambda)=0$. Denote by  $\Omega_g$  the domain in $\RR^2_{\geq 0}$ that is bounded by the axes and the graph of $g$ and  set $X_g= \mu^{-1}(\Omega_g)$. 

By the Regular Value theorem we have the following.
 \begin{lemma}\label{reg}
If $g$ is smooth and $g'$ is finite and bounded away from zero, then the boundary of $X_g$ is smooth.
\end{lemma}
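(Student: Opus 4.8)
The plan is to exhibit $\partial X_g$ as a regular level set of a single smooth function defined on an open neighborhood of $X_g$ in $\CC^2$, and then to invoke the regular value (preimage) theorem.

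First I would extend $g$. Since $g$ is nonincreasing with $g'$ finite and bounded away from zero, we have $g' \le -c < 0$ for some $c > 0$; in particular $g$ is strictly decreasing, $g > 0$ on $[0,\lambda)$, and $g'(\lambda)$ is a finite negative number. Hence $g$ extends to a smooth, strictly decreasing function $\tilde g$ on an interval $[0,\lambda+\varepsilon)$ with $\tilde g = g$ on $[0,\lambda]$. On the open set $U = \{(z_1,z_2)\in\CC^2 : \pi|z_1|^2 < \lambda+\varepsilon\}$ I would then define
\[
H\colon U \to \RR, \qquad H(z_1,z_2) = \pi|z_2|^2 - \tilde g\!\left(\pi|z_1|^2\right).
\]
Because $\tilde g$ is smooth on $[0,\lambda+\varepsilon)$ and $z_1 \mapsto \pi|z_1|^2$ is a polynomial in $\mathrm{Re}\,z_1, \mathrm{Im}\,z_1$, the function $H$ is smooth on $U$.

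Next I would verify that $X_g = \{p\in U : H(p)\le 0\}$ and that $\partial X_g = H^{-1}(0)$. The inequality $H \le 0$ reads $\pi|z_2|^2 \le \tilde g(\pi|z_1|^2)$; since the left-hand side is $\ge 0$ while $\tilde g < 0$ on $(\lambda,\lambda+\varepsilon)$, every such point automatically satisfies $\pi|z_1|^2 \le \lambda$, hence lies in $\mu^{-1}(\Omega_g) = X_g$, and conversely. Thus $\{H\le 0\}$ is contained in the compact set $\{\pi|z_1|^2 \le \lambda\}$, which lies in the interior of $U$; so $\{H = 0\}$ receives no contribution from $\partial U$, and indeed $\partial X_g = H^{-1}(0)$. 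Finally, writing $z_j = x_j + iy_j$,
\[
\nabla H = \bigl(-2\pi\,\tilde g'(\pi|z_1|^2)\,x_1,\; -2\pi\,\tilde g'(\pi|z_1|^2)\,y_1,\; 2\pi x_2,\; 2\pi y_2\bigr),
\]
and since $\tilde g'$ is nowhere zero, $\nabla H = 0$ forces $x_1 = y_1 = x_2 = y_2 = 0$. So the only critical point of $H$ on $U$ is the origin, where $H(0,0) = -g(0) < 0 \ne 0$. Hence every point of $H^{-1}(0)$ is a regular point, $0$ is a regular value, and $\partial X_g = H^{-1}(0)$ is a smooth hypersurface of $\CC^2$; equivalently, $X_g$ has smooth boundary.

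The step I would be most careful about is the identification $\partial X_g = H^{-1}(0)$ near the ``equator'' circle $\{z_2 = 0,\ \pi|z_1|^2 = \lambda\}$ and along the axis $\{z_1 = 0\}$: at the equator one must know $g$ extends past $\lambda$ with nonzero slope, so that the a priori constraint $\pi|z_1|^2 \le \lambda$ is automatically enforced by $H \le 0$; and along $z_1 = 0$ one uses $g(0) > 0$ to see that the collapsing circle fibre sits in the interior of $X_g$ rather than on its boundary. Everything else is a routine application of the preimage theorem, with the hypothesis ``$g'$ bounded away from zero'' entering precisely to guarantee both the smooth extension $\tilde g$ and the non-vanishing of $\nabla H$ on $H^{-1}(0)$.
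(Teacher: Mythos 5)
Your proof is correct and is precisely the argument the paper has in mind: the paper itself gives no proof beyond citing the Regular Value theorem, and your write-up supplies exactly the missing details (the extension $\tilde g$, the global defining function $H$, the gradient computation, and the identification $\partial X_g = H^{-1}(0)$, with the hypothesis $g' \le -c < 0$ entering both to produce $\tilde g$ and to kill the critical locus of $H$ away from the origin).
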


\subsubsection{Convex graph domains}
For $\lambda>0$, let $\widehat{\mathcal{V}}(\lambda)$ be the set of continuous functions $f\colon [0,\lambda] \to \RR_{\geq 0}$ with the following properties:
\begin{enumerate}
\item[(f1)] $f$ is smooth on $[0,\lambda)$,
\item[(f2)] $f(0)=\lambda$,
\item[(f3)] $f'(0) \leq 0$,
\item[(f4)] $f''<0$ on $[0, \lambda)$,
\item[(f5)] $f^{-1} = f$.
\end{enumerate}
Properties (f1)-(f4) imply that the toric domain $X_f$ is convex. Property (f5) implies that it is also symmetric. 
In this setting, Theorem  \ref{balance} yields the following.
\begin{proposition}\label{sym} Suppose $f$ is in $\widehat{\mathcal{V}}(\lambda)$. For even values of $k$,
\begin{equation*}
\label{subset}
c_k (X_f) = k x(f)
\end{equation*}
where $x(f)$ is the unique fixed point of $f$.
 For odd values of $k$, 
 \begin{equation*}
\label{subset}
c_k (X_f) = 
\begin{cases}
    \frac{k-1}{2}x_k + \frac{k+1}{2}f(x_k),  & \text{if} \,\, -\frac{k-1}{k+1} < f'(0)\\
     {} & {}\\
\frac{k+1}{2}\lambda, & \text{otherwise},
\end{cases}
\end{equation*}
where $x_k$ is defined uniquely by the equation
\begin{equation}
\label{hit}
f'(x_k)= -\frac{k-1}{k+1}.
\end{equation}
\end{proposition}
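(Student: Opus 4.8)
The plan is to apply Theorem \ref{balance} with $n=2$ and then analyze the resulting single optimization problem $c_k(X_f) = \max\{\langle V(k,2), (x,f(x))\rangle \mid x \in [0,\lambda]\}$ by splitting into the parity of $k$. When $k$ is even, $V(k,2) = (k/2, k/2)$, so we must maximize $\frac{k}{2}(x + f(x))$ over $x \in [0,\lambda]$. Since $f'' < 0$, the function $x \mapsto x + f(x)$ is strictly concave, so its maximum is attained where its derivative $1 + f'(x)$ vanishes, i.e. where $f'(x) = -1$. The key point is that property (f5), $f^{-1} = f$, forces the fixed point $x(f)$ to satisfy $f'(x(f)) = -1$: differentiating $f(f(x)) = x$ gives $f'(f(x))f'(x) = 1$, and at $x = x(f)$ this reads $f'(x(f))^2 = 1$; since $f$ is decreasing, $f'(x(f)) = -1$. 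Hence the maximizer is $x(f)$, and $c_k(X_f) = \frac{k}{2}(x(f) + f(x(f))) = \frac{k}{2}\cdot 2x(f) = kx(f)$.

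When $k$ is odd, $V(k,2) = \left(\frac{k-1}{2}, \frac{k+1}{2}\right)$, so we must maximize $\phi(x) := \frac{k-1}{2}x + \frac{k+1}{2}f(x)$ over $[0,\lambda]$. Again $f'' < 0$ makes $\phi$ strictly concave, so $\phi'(x) = \frac{k-1}{2} + \frac{k+1}{2}f'(x) = 0$ exactly when $f'(x) = -\frac{k-1}{k+1}$. There are two cases depending on whether this critical point lies in the interior of $[0,\lambda]$. Since $f'$ is strictly decreasing on $[0,\lambda)$ (by (f4)), ranges continuously, has $f'(0) \le 0$, and $f'(x) \to -\infty$ as $x \to \lambda^-$ (this last needs a short justification — see below), the equation $f'(x) = -\frac{k-1}{k+1}$ has a solution $x_k \in (0,\lambda)$ precisely when $-\frac{k-1}{k+1} < f'(0)$, i.e. when the target slope is below the initial slope. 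In that regime the maximum is interior and equals $\phi(x_k) = \frac{k-1}{2}x_k + \frac{k+1}{2}f(x_k)$, giving the first branch. Otherwise $-\frac{k-1}{k+1} \ge f'(0)$, meaning $\phi'(0) = \frac{k-1}{2} + \frac{k+1}{2}f'(0) \le 0$, so $\phi$ is nonincreasing on $[0,\lambda]$ and its maximum is at $x = 0$, where $\phi(0) = \frac{k+1}{2}f(0) = \frac{k+1}{2}\lambda$ by (f2), giving the second branch.

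The main obstacle — really the only subtle point — is verifying that $f'(x) \to -\infty$ as $x \to \lambda^-$, which is what guarantees that for odd $k$ the target slope $-\frac{k-1}{k+1}$ is always eventually attained whenever it is below $f'(0)$, and more basically that the fixed point $x(f)$ exists and is unique and that all the $x_k$ are well-defined. Existence and uniqueness of $x(f)$: the continuous function $x \mapsto f(x) - x$ is positive at $0$ (value $\lambda > 0$) and negative at $\lambda$ (value $-\lambda < 0$), hence has a zero, and it is strictly decreasing (derivative $f' - 1 < 0$), so the zero is unique. For the blow-up of $f'$ at $\lambda$: the involution property (f5) pairs the slope at $x$ with the reciprocal slope at $f(x)$ via $f'(x)f'(f(x)) = 1$; as $x \to 0^+$ we have $f(x) \to \lambda^-$, and $f'(0) \le 0$ is finite, so if $f'(0) < 0$ then $f'(f(x)) = 1/f'(x) \to 1/f'(0)$, which is finite and negative — this does not by itself give $-\infty$. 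One should instead argue directly: the graph of $f$ meets the vertical line $x = \lambda$ at height $f(\lambda) = 0$ (since $f(0) = \lambda$ and $f^{-1} = f$ means $f(\lambda) = 0$), and by symmetry of the graph across the diagonal together with $f'(0) \le 0$ finite, the curve becomes vertical at the right endpoint, so $\lim_{x \to \lambda^-} f'(x) = -\infty$. I would present this as a short lemma or fold it into the verification that $X_f$ is convex and symmetric, then let the two parity computations above proceed routinely via strict concavity of the linear-functional restrictions.
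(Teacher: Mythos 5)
Your approach is the same as the paper's: apply Theorem \ref{balance} with $n=2$, write $V(k,2)$ for the two parities, and solve the resulting one--variable concave optimization. The even case, the existence and uniqueness of $x(f)$, and the identity $f'(x(f))=-1$ derived from the involution are all handled correctly.

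However, there is a genuine error in the supporting step you yourself flag as the ``only subtle point.'' You first (correctly) observe that when $f'(0)<0$ the identity $f'(x)f'(f(x))=1$ gives $f'(\lambda^-)=1/f'(0)$, which is finite, and you note this does not produce $-\infty$. But you then try to salvage the conclusion by a ``direct'' geometric argument that reflection across the diagonal makes the curve vertical at $x=\lambda$. That argument is wrong: reflection across the diagonal sends the point $(0,\lambda)$ with slope $f'(0)$ to the point $(\lambda,0)$ with slope $1/f'(0)$, so when $-1<f'(0)<0$ the slope at $\lambda$ is the finite number $1/f'(0)$, not $-\infty$. The limit $\lim_{x\to\lambda^-}f'(x)=-\infty$ holds only in the special case $f'(0)=0$; for generic $f\in\widehat{\mathcal{V}}(\lambda)$ it is simply false.

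Fortunately you do not need $-\infty$; the proof only requires $f'(\lambda^-)<-\frac{k-1}{k+1}$, and since $-\frac{k-1}{k+1}>-1$ for every finite odd $k$, it suffices to have $f'(\lambda^-)<-1$. This weaker fact does follow from the involution and strict concavity: differentiating $f(f(x))=x$ and using that $f'$ is strictly decreasing forces $-1<f'(0)\leq 0$ (otherwise $f'(\lambda^-)=1/f'(0)\geq f'(0)$, contradicting monotonicity), and then $f'(\lambda^-)=1/f'(0)<-1$ (with the convention $1/0=-\infty$). Replacing your false claim with this bound, and noting that it also guarantees the interior critical point for the even case, repairs the argument; everything else in the proposal is sound and matches the paper's proof.
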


\begin{proof}
For even values of $k$, we have 
\begin{equation*}
\label{ }
V(k,2) = \left(\frac{k}{2},  \frac{k}{2} \right).
\end{equation*}
Theorem  \ref{balance} then yields
\begin{eqnarray*}
c_k(X_f) & = & \max_{x \in [0,\lambda]} \left\{\frac{k}{2} x+\frac{k}{2}f(x) \right\}\\
{} & = & \frac{k}{2} \max \left\{ \lambda, 2x(f) \right\}\\
{} & = & k x(f).
\end{eqnarray*}
Here we have used (f1)-(f5) to infer that $x(f)$ is the unique solution of $f'(x) =-1$, and that $x(f)> \lambda / 2$.

For odd values of $k$ we have 
\begin{equation*}
\label{ }
V(k,2) = \left( \left\lfloor \frac{k}{2} \right\rfloor,\left\lceil \frac{k}{2}\right\rceil  \right) = \left( \frac{k-1}{2} , \frac{k+1}{2}  \right)
\end{equation*}
and Theorem  \ref{balance} yields
\begin{equation*}
\label{ }
c_k(X_f) = \max_{x \in [0,\lambda]} \left\{\frac{k-1}{2} x +\frac{k+1}{2}  f(x)\right\}.
\end{equation*}
Set \begin{equation*}
\label{ }
\tilde{f}_k(x) =\frac{k-1}{2} x +\frac{k+1}{2}  f(x)
\end{equation*}
It is straightforward to check that if    
\begin{equation*}
\label{ }
f'(0) > -\frac{k-1}{k+1}
\end{equation*}
then we have $\tilde{f}_k(0)>0$ and, by (f5), $\tilde{f}_k(\lambda)<0$. Together with (f4), this implies that $\tilde{f}_k$ takes its maximum value at its unique critical point $x_k \in (0,\lambda)$ defined by
\begin{equation*}
\label{ }
f'(x_k) = -\frac{k-1}{k+1}.
\end{equation*}
If, on the other hand, 
\begin{equation*}
\label{ }
f'(0) \leq -\frac{k-1}{k+1},
\end{equation*}
then $\tilde{f}_k$ is strictly decreasing and  its maximum value is $\tilde{f}_k(0)=\frac{k+1}{2}\lambda$. 
\end{proof}

\subsubsection{Concave graph domains} For $\lambda>0$, let $\widehat{\mathcal{C}}(\lambda)$ be the set of continuous functions $h\colon [0,\lambda] \to \RR_{\geq 0}$ with the following properties:
\begin{enumerate}
\item[(h1)] $h$ is smooth on $(0,\lambda]$,
\item[(h2)] $h(\lambda) = 0$,
\item[(h3)] $h'(\lambda) \leq 0$,
\item[(h4)] $h''>0$ on $(0, \lambda]$, 
\item[(h5)] $h^{-1} = h$.
\end{enumerate}
The toric domain $X_h= \mu^{-1}(\Omega_h)$ is concave and symmetric. As before, each $h \in \widehat{\mathcal{C}}(\lambda)$  has a unique fixed point $x(h)  \in (0,1)$ and 
\begin{equation*}
\label{ }
h'(x(h)) =-1.
\end{equation*}

Theorem  \ref{balance-c} implies the following.
\begin{proposition}\label{sym-g} Suppose $h$ is in $\widehat{\mathcal{C}}(\lambda)$. For odd values of $k$,
\begin{equation*}
\label{subset}
c_k (X_h) = (k+1) x(h)
\end{equation*}
where $x(h)$ is the unique fixed point of $h$. For even values of $k$, 
  \begin{equation*}
\label{subset}
c_k (X_h) = 
\begin{cases}
    \frac{k+2}{2}\check{x}_k + \frac{k}{2}h(\check{x}_k),  & \text{if} \,\, -\frac{k}{k+2} < h'(\lambda)\\
     {} & {}\\
\frac{k}{2}\lambda, & \text{otherwise},
\end{cases}
\end{equation*}
where $\check{x}_k$ is the number that is defined uniquely by the condition
\begin{equation}
\label{hitcheck}
h'(\check{x}_k)= -\frac{k+2}{k}.
\end{equation}
\end{proposition}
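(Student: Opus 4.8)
The plan is to mirror the proof of Proposition \ref{sym}, using Theorem \ref{balance-c} in place of Theorem \ref{balance}. First I would compute $\check{V}(k,2)$ in the two parity cases. For $k$ odd, $k+n-1 = k+1$ is even, so $k+1 \equiv 0 \pmod 2$ and $\check{V}(k,2) = \left(\frac{k+1}{2}, \frac{k+1}{2}\right)$; for $k$ even, $k+1$ is odd, so $k+1 \equiv 1 \pmod 2$ and $\check{V}(k,2) = \left(\left\lceil \frac{k+1}{2}\right\rceil, \left\lfloor \frac{k+1}{2}\right\rfloor\right) = \left(\frac{k+2}{2}, \frac{k}{2}\right)$. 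Since $X_h$ is concave and symmetric, Theorem \ref{balance-c} gives $c_k(X_h) = \min\{\langle \check{V}(k,2), w\rangle \mid w \in \bar{\partial}_+\Omega_h\}$, and because $\Omega_h$ is the region under the graph of $h$ (with $h$ strictly convex, continuous, vanishing at $\lambda$), the relevant boundary portion is exactly the graph $\{(x, h(x)) \mid x \in [0,\lambda]\}$, so $c_k(X_h) = \min_{x \in [0,\lambda]} \{a_k x + b_k h(x)\}$ with $(a_k,b_k) = \check{V}(k,2)$.

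For the odd case, set $\phi(x) = \frac{k+1}{2}(x + h(x))$. Properties (h1)--(h5) imply $x(h)$ is the unique solution of $h'(x) = -1$, hence the unique critical point of $x + h(x)$, and since $h'' > 0$ this critical point is a \emph{minimum}. Thus $\min_{x}(x + h(x)) = x(h) + h(x(h)) = 2x(h)$ by (h5), giving $c_k(X_h) = \frac{k+1}{2}\cdot 2x(h) = (k+1)x(h)$. (One should note that the boundary values $x = 0$ and $x = \lambda$ give $x + h(x) = \lambda$, which exceeds $2x(h)$ since the fixed point of a decreasing involution lies below $\lambda/2$ in the concave setting as well — or simply because $2x(h)$ is a global minimum of a convex function on $[0,\lambda]$.)

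For the even case, set $\psi_k(x) = \frac{k+2}{2} x + \frac{k}{2} h(x)$, so $\psi_k'(x) = \frac{k+2}{2} + \frac{k}{2} h'(x)$, which vanishes precisely when $h'(x) = -\frac{k+2}{k}$. Since $h'$ is increasing (by $h'' > 0$) from $h'(0^+)$ up to $h'(\lambda) \le 0$, such a solution $\check{x}_k \in (0,\lambda)$ exists iff $h'(\lambda) > -\frac{k+2}{k}$, i.e. $-\frac{k}{k+2} < h'(\lambda)$ (after noting $-\frac{k+2}{k} < -\frac{k}{k+2}$ is not the right comparison — rather one wants $h'(\lambda) > -\frac{k+2}{k}$; I would restate the hypothesis in the form appearing in the proposition and check it is equivalent). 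When it exists, $h'' > 0$ makes $\psi_k$ strictly convex, so $\check{x}_k$ is the global minimum and $c_k(X_h) = \psi_k(\check{x}_k) = \frac{k+2}{2}\check{x}_k + \frac{k}{2} h(\check{x}_k)$. When no interior critical point exists, $\psi_k'(x) = \frac{k+2}{2} + \frac{k}{2}h'(x) \le \frac{k+2}{2} + \frac{k}{2}h'(\lambda) \le 0$ throughout (using $h'$ increasing and $h'(\lambda) \le -\frac{k+2}{k}$... wait, the condition is $h'(\lambda) \le -\frac{k}{k+2}$), hence $\psi_k$ is nonincreasing and its minimum is $\psi_k(\lambda) = \frac{k+2}{2}\lambda + 0 = \frac{k+2}{2}\lambda$ — here I would double-check against the stated answer $\frac{k}{2}\lambda$, which suggests the minimum is instead attained at $x=0$ where $\psi_k(0) = \frac{k}{2}h(0) = \frac{k}{2}\lambda$ by (h2) and (h5); so in fact when $\psi_k$ is monotone one must determine the correct direction, and the claim is that $\psi_k$ is nondecreasing in that regime, i.e. $\psi_k'(x) \ge 0$, which holds iff $h'(x) \ge -\frac{k+2}{k}$ for all $x$, i.e. iff $h'(0^+) \ge -\frac{k+2}{k}$. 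The main obstacle, and the step requiring the most care, is precisely this sign/monotonicity bookkeeping: correctly matching the threshold condition on $h'(\lambda)$ to whether the optimum sits at an interior critical point, at $x = 0$, or at $x = \lambda$, and verifying that the stated constant $\frac{k}{2}\lambda$ is consistent. Everything else is a routine transcription of the Proposition \ref{sym} argument with the convexity of $h$ replacing the concavity of $f$ and $\min$ replacing $\max$.
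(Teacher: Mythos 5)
Your strategy is exactly the paper's: specialize Theorem \ref{balance-c} to $n=2$, split on the parity of $k$, compute $\check{V}(k,2)$, and reduce to minimizing $\frac{k+1}{2}(x+h(x))$ (odd $k$) or $\psi_k(x)=\frac{k+2}{2}x+\frac{k}{2}h(x)$ (even $k$) over $[0,\lambda]$, using $h''>0$ to locate the unique minimum. The odd case is complete and your ``simply because $2x(h)$ is a global minimum of a convex function'' remark is the clean justification.

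In the even case you correctly realize that when $\psi_k$ has no interior critical point the minimum sits at $x=0$, giving $\psi_k(0)=\frac{k}{2}h(0)=\frac{k}{2}\lambda$, and that this occurs iff $\psi_k'\geq 0$ throughout, i.e.\ $h'(0^+)\geq -\frac{k+2}{k}$ (since $h'$ is increasing). But you flag the sign bookkeeping as ``the step requiring the most care'' and leave it unfinished. The missing piece is the translation to the proposition's stated condition via (h5): from $h^{-1}=h$ one gets $h'(h(x))\,h'(x)=1$, hence $h'(0^+)\,h'(\lambda)=1$, and because both are negative, $h'(0^+)\geq -\frac{k+2}{k}$ is equivalent to $h'(\lambda)\leq -\frac{k}{k+2}$, exactly the ``otherwise'' clause. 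This also resolves the confusion in your second paragraph: the correct existence threshold for $\check{x}_k$ is $h'(\lambda)>-\frac{k}{k+2}$, not $h'(\lambda)>-\frac{k+2}{k}$. The two are not equivalent; since $-\frac{k}{k+2}>-\frac{k+2}{k}$, the former condition (via the reciprocal relation) forces $h'(0^+)<-\frac{k+2}{k}<h'(\lambda)$, so $\psi_k'$ changes sign on $(0,\lambda)$, while the latter alone only guarantees $\psi_k'(\lambda)>0$. With that reciprocal identity inserted, your argument matches the paper's proof.
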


%
%

\begin{proof}
For odd values of $k$, we have 
\begin{equation*}
\label{ }
\check{V}(k,2) = \left(\frac{k+1}{2},  \frac{k+1}{2} \right).
\end{equation*}
In this case, Theorem  \ref{balance-c}  together with conditions (h1)-(h5), yields
\begin{eqnarray*}
c_k(X_h)  & = & \min_{x \in [0,\lambda]} \left\{\frac{k+1}{2} x+\frac{k+1}{2}h(x) \right\}\\
{} & = & \frac{k+1}{2} \min \left\{ \lambda, 2x(h) \right\}\\
{} & = & (k+1)x(h).
\end{eqnarray*}

For even values of $k$ we have 
\begin{equation*}
\label{ }
\check{V}(k,2) = \left( \left\lceil \frac{k+1}{2}\right\rceil , \left\lfloor \frac{k+1}{2} \right\rfloor \right)= \left( \frac{k+2}{2} , \frac{k}{2}  \right)
\end{equation*}
and Theorem  \ref{balance-c} yields
\begin{equation*}
\label{ }
c_k(X_h) = \min_{x \in [0,\lambda]} \left\{\frac{k+2}{2} x + \frac{k}{2} h(x)\right\}.
\end{equation*}
Arguing as in the proof of Proposition \ref{sym} one can show that if    
\begin{equation*}
\label{ }
h'(0) \geq -\frac{k+2}{k},
\end{equation*}
or equivalently if $h'(\lambda) \leq -\frac{k}{k+2}$,  then the function
\begin{equation*}
\label{ }
x \mapsto \frac{k+2}{2} x + \frac{k}{2} h(x)
\end{equation*}
 has no critical points in $(0,\lambda)$ and a minimum value of $\frac{k}{2}\lambda$. Otherwise, it has a unique critical point $\check{x}_k \in (0,\lambda)$ defined by
\begin{equation*}
\label{ }
h'(\check{x}_k) = -\frac{k+2}{k}
\end{equation*}
at which it takes its minimum value.
\end{proof}



\begin{example}\label{lp} Motivated by the work of Ostrover and Ramos in \cite{or} we apply Proposition \ref{sym} and Proposition \ref{sym-g} to the  $\ell_p$-sum of two Lagrangian disks, for $1 \leq p \leq \infty$. These domains are defined as follows
$$
P_L(p)= \left\{ (x_1+i y_1, x_2+ iy_2) \in \CC^2 \mid ( x_1^2 +x_2^2)^{\frac{p}{2}}+ ( y_1^2 +y_2^2)^{\frac{p}{2}} \leq 1\right\}.
$$

\begin{proposition}\label{lp} 
 For $1\leq p \leq 2$, we have 
\begin{equation*}
\label{ }
c_k(P_L(p))=
\begin{cases}
   k \frac{2\Gamma(1+\frac{1}{p})^2}{\Gamma(1+\frac{2}{p})}   & \text{ for even $k$}, \\
   \frac{k+1}{2}\frac{2\pi}{4^{1/p}}   & \text{ for odd $k < \frac{1}{\sqrt{\frac{2}{p}}-1}$}, \\
  kg_p(-v_k) + (k+1)\pi v_k     & \text{for odd  $k \geq \frac{1}{\sqrt{\frac{2}{p}}-1}$},
\end{cases}
\end{equation*}
where 
\begin{equation}
\label{gp}
g_p(v) =2\int_{(\frac{1}{2} -\sqrt{\frac{1}{4} -v^p})^{1/p}}^{(\frac{1}{2} +\sqrt{\frac{1}{4} -v^p})^{1/p}} \sqrt{(1-r^p)^{2/p} -\frac{v^2}{r^2}} \, dr.
\end{equation}
and $v_k$ is defined by \begin{equation}
\label{vk}
g'_p(-v_k) =-\pi \frac{k+1}{k}.
\end{equation}

For $2<p<\infty$, we have 
\begin{equation*}
\label{ }
c_k(P_L(p))=
\begin{cases}
(k+1) \frac{2\Gamma(1+\frac{1}{p})^2}{\Gamma(1+\frac{2}{p})}   & \text{ for odd $k$}, \\
 \frac{k}{2}\frac{2\pi}{4^{1/p}}   & \text{ for even $k <  \frac{\sqrt{\frac{2}{p}}}{1-\sqrt{\frac{2}{p}}}$}, \\
  (k+2)\pi \check{v}_k+ (k+1)g_p(-\check{v}_k)    & \text{for even  $k \geq \frac{\sqrt{\frac{2}{p}}}{1-\sqrt{\frac{2}{p}}}$},
\end{cases}
\end{equation*}
where $g_p$ is as above and $\check{v}_k$ is defined by 
\begin{equation}
\label{vkk}
g'_p(-\check{v}_k) =-\pi \frac{k}{k+1}.
\end{equation}
\end{proposition}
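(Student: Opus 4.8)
The plan is to apply Propositions \ref{sym} and \ref{sym-g} to the specific toric models of $P_L(p)$, after first recasting each $P_L(p)$ as a graph domain $X_{g_p}$ in the sense of Section \ref{examples}. The starting point is the observation, essentially due to Ostrover--Ramos in \cite{or}, that $P_L(p)$ is a toric domain: passing to the ``position--momentum radius'' coordinates and integrating out the angular variables, the interior of $P_L(p)$ is symplectomorphic to $X_{\Omega_{g_p}}$ where $g_p$ is the profile function obtained by computing, for fixed value $v$ of the first moment coordinate, the area swept in the conjugate plane. This yields exactly formula \eqref{gp}: the integrand $\sqrt{(1-r^p)^{2/p} - v^2/r^2}$ is the half-width of the admissible region in the $y$-plane at radius $r$, and the limits of integration are the two solutions of $(1-r^p)^{1/p} = |v|/r$, i.e. $r^p(1 - r^p)^{?}$ rearranged to $r = (\tfrac12 \pm \sqrt{\tfrac14 - v^p})^{1/p}$. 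So the first step is to verify this toric description and record that $g_p$ has the qualitative properties needed: for $1 \le p \le 2$ the function is convex (matching hypotheses (h1)--(h5) of $\widehat{\mathcal{C}}$), and for $2 < p < \infty$ it is concave (matching (f1)--(f5) of $\widehat{\mathcal{V}}$), with the fixed-point symmetry $g_p^{-1} = g_p$ coming from the evident $x \leftrightarrow y$ symmetry of the defining inequality for $P_L(p)$.

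Once the model is in place, the second step is bookkeeping: feed $g_p$ into the two propositions. For $1 \le p \le 2$ the domain is concave, so Proposition \ref{sym-g} applies. For odd $k$ the answer is $(k+1)x(g_p)$, and the fixed point is where $g_p(x) = x$, i.e. $2x^p = 1$, giving $x(g_p) = 4^{-1/p}\cdot$(appropriate constant); the stated value $\frac{k+1}{2}\cdot\frac{2\pi}{4^{1/p}}$ should emerge once one computes $g_p$ at its fixed point rather than the fixed point of the profile in the $(x,y)$-plane — here one must be careful that the ``fixed point'' relevant to $c_k$ is where the \emph{diagonal} meets $\bar\partial_+\Omega_{g_p}$, and evaluate $x + g_p(x)$ there. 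Wait — re-examining, for odd $k$ in the concave case Proposition \ref{sym-g} gives $(k+1)x(h)$ directly, so the claim is that $x(g_p) = \frac{1}{2}\cdot\frac{2\pi}{4^{1/p}}/1$... this requires $x(g_p) = \pi/4^{1/p}$, which is what the area integral $g_p$ evaluates to at its symmetric point; I would check this by direct substitution $v = 4^{-1/p}$ into \eqref{gp}. For even $k$, Proposition \ref{sym-g} gives the case split on whether $h'(\lambda) = g_p'(\lambda)$ exceeds $-\frac{k}{k+2}$; the threshold $k < \frac{1}{\sqrt{2/p}-1}$ and $k \ge \frac{1}{\sqrt{2/p}-1}$ comes from computing the slope $g_p'$ at the endpoint. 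The key translation is $h'(\check{x}_k) = -\frac{k+2}{k}$; writing the critical-slope equation in terms of $g_p'$ and substituting $v_k$ via \eqref{vk} (note the reciprocal relation between slopes of $g_p$ as a function of its two arguments explains why \eqref{vk} reads $g_p'(-v_k) = -\pi\frac{k+1}{k}$ rather than $-\frac{k}{k+2}$ — there is a change of variables and a factor of $\pi$ absorbing the angular normalization). The case $2 < p < \infty$ is entirely parallel using Proposition \ref{sym} on the convex domain $X_{g_p}$, with odd and even roles swapped.

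The main obstacle — and the step I would spend the most care on — is the change-of-variables dictionary relating the literal profile function appearing in the toric model of $P_L(p)$ to the normalized function $g_p$ of \eqref{gp}, and correspondingly relating ``$h'(\check x_k) = -\frac{k+2}{k}$'' to ``$g_p'(-\check v_k) = -\pi\frac{k}{k+1}$''. These are not the same variable: $g_p(v)$ as defined is (twice) an area, i.e. the \emph{value} of the profile as a function of the moment coordinate $v = \pi|z_1|^2$-type variable, and its derivative $g_p'$ is $-2\pi$ times a length-type quantity, not a slope in the naive $(x,y)$ picture. Getting the constants and the $\pi$'s exactly right — so that, e.g., at the symmetric point one recovers $\frac{2\pi}{4^{1/p}}$ and the critical equations take the stated form — is the delicate accounting. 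I would handle this by fixing one normalization (say, insisting $P_L(\infty)$ reproduces the known capacities of the Lagrangian bidisk from Example \ref{lag}, and $P_L(2)$ reproduces the round ball $B^4$ via $P_L(2) = B^4(\pi)$-type identification) and back-solving the constants, then verifying the general $p$ formula is consistent with \eqref{gp} and \eqref{vk}--\eqref{vkk} by a single clean substitution. The remaining verifications — convexity/concavity of $g_p$, existence and uniqueness of $v_k, \check v_k$, and the endpoint-slope computations giving the case thresholds — are routine calculus once the dictionary is fixed.
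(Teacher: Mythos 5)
Your high-level plan — pass to the Ostrover–Ramos toric model of $P_L(p)$ and then apply Propositions~\ref{sym} and \ref{sym-g} to the resulting symmetric graph domain — is the correct strategy and matches the paper's proof. But the concrete setup contains two errors serious enough that the argument as written would not produce the stated formulas.

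First, the convex/concave assignment is reversed. For $1 \le p \le 2$ the toric domain $X_p$ is \emph{convex}, and one applies Proposition~\ref{sym}; for $p > 2$ it is \emph{concave}, and one applies Proposition~\ref{sym-g}. You have it the other way around. You can see the right assignment directly from the formulas: for $1\le p\le 2$ the even-$k$ capacity is $k\cdot\frac{2\Gamma(1+1/p)^2}{\Gamma(1+2/p)} = k\,g_p(0)$, which has the form $k\,x(f)$ from Proposition~\ref{sym} (convex, even $k$), while the odd-$k$, small-$k$ case is $\frac{k+1}{2}\cdot\frac{2\pi}{4^{1/p}} = \frac{k+1}{2}\lambda$, matching the "otherwise" branch of Proposition~\ref{sym} (convex, odd $k$). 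In your framework these would have to come out of Proposition~\ref{sym-g}, whose odd-$k$ branch is unconditionally $(k+1)x(h)$ and whose "otherwise" branch is $\frac{k}{2}\lambda$; neither reproduces the stated expressions.

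Second, you treat $g_p$ as the profile function of $X_p$, i.e.\ as though $\bar\partial_+\Omega_p = \{(x, g_p(x))\}$. It is not. The Ostrover–Ramos theorem parametrizes $\bar\partial_+\Omega_p$ by $\alpha_p(v)$ in which $g_p$ appears in \emph{both} coordinates, and the actual profile function $f_p$ (or $h_p$) is only defined implicitly through $\alpha_p$. This distinction is not a matter of normalization constants: the convexity of $g_p$ and of the profile have \emph{opposite} signs (Ostrover–Ramos, Lemma~18: $g_p$ is concave for $1<p<2$ and convex for $p>2$, while the profile is concave for $1<p<2$ — making $X_p$ convex — and convex for $p>2$). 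Relatedly, the fixed point of the profile is not the solution of $g_p(x)=x$ (and certainly not the solution of $2x^p=1$, which lives in the original coordinates on $P_L(p)$, not the moment-map plane); it is $g_p(0)$, read off directly from $\alpha_p(0) = (g_p(0), g_p(0))$. Your back-solving heuristic is a reasonable sanity check, but it cannot repair a reversed case assignment; what is needed is a careful translation from the parametrization $\alpha_p(v)$ to the implicit profile $f_p$ (respectively $h_p$), from which $x(f_p)=g_p(0)$, $\lambda = 2\pi/4^{1/p}$, and the slope identity $f_p'(g_p(u)) = (2\pi + g_p'(u))/g_p'(u)$ all fall out, yielding the thresholds $\frac{1}{\sqrt{2/p}-1}$ and $\frac{\sqrt{2/p}}{1-\sqrt{2/p}}$ and the critical equations \eqref{vk}, \eqref{vkk}.
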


\begin{proof}

The crucial starting point is the following result from \cite{or}.

\begin{theorem}[Theorem 5, \cite{or}]
The interior of $P_L(p)$ is symplectomorphic to the interior of the toric domain $X_p =\mu^{-1}(\Omega_p) \subset \RR^4$ where $\Omega_p \subset \RR^2_{\geq0}$ is the region bounded by the axes and the curve
\begin{equation*}
\label{ }
\alpha_p(v)=
\begin{cases}
    (g_p(-v),-2\pi v+g_p(-v))  &, \text{ for $v \in [-(1/4)^{1/p},0]$} \\
    {} & {}\\
 (2\pi v+ g_p(v), g_p(v))    &, \text{ for $v \in [0,(1/4)^{1/p}]$}
\end{cases}
\end{equation*}
where $g_p(v)$ is defined as in \eqref{gp}.
\end{theorem}

In the process of computing the first two ECH capacities of $X_p$, Ostrover and Ramos also prove the following.

\begin{lemma}[Lemma 18, \cite{or}] For all  for $p\geq1$:
\begin{itemize}
  \item[(i)] $g_p(0) =\frac{2\Gamma(1+\frac{1}{p})^2}{\Gamma(1+\frac{2}{p})}$ and $g_p(1/4^{1/p})=0$.\\
  \item[(ii)] $g_p$ is strictly decreasing.\\
  \item[(iii)] $\displaystyle{\lim_{v\to 0}g'_p(v) =-\pi}$ and $\displaystyle{\lim_{v\to 1/4^{1/p}}g'_p(v) =-\sqrt{\frac{2}{p}}\pi}$.
\end{itemize}
Moreover, for $1< p < 2$ the function $g_p$ is strictly concave and for $p >2$ the function $g_p$ is strictly convex.
\end{lemma}

\noindent{\bf Case 1: $1\leq p \leq 2$.} In this range, the toric region  $X_p$ is  convex and symmetric. Let $f_p$ be the function whose graph corresponds to the image of $\alpha_p$. Then $$x(f_p) = g_p(0)=\frac{2\Gamma(1+\frac{1}{p})^2}{\Gamma(1+\frac{2}{p})}.$$ As well, $-\frac{k-1}{k+1} \leq f_p'(0)$ if and only if 
$k \geq \frac{1}{\sqrt{\frac{2}{p}}-1}$. If this holds, then we have 
$$
x_k(f_p) = g_p(-v_k)
$$
where $v_k \in [0,(1/4)^{1/p}]$ is determined uniquely by the equation
\begin{equation}
\label{vk}
g'_p(-v_k) =-\pi \frac{k+1}{k}.
\end{equation}
With this, the first capacity formula in Proposition \ref{lp} follows immediately from Proposition \ref{sym}.

\noindent{\bf Case 2: $p>2$.} In this range, the toric region  $X_p$ is  convcave and symmetric. Let $h_p$ be the function whose graph corresponds to the image of $\alpha_p$. As before,  $$x(h_p) = g_p(0)=\frac{2\Gamma(1+\frac{1}{p})^2}{\Gamma(1+\frac{2}{p})}.$$ Now, we have $-\frac{k}{k+2} \leq h_p'(\frac{2\pi}{4^{1/p}})$ if and only if $k\geq \frac{\sqrt{\frac{2}{p}}}{1-\sqrt{\frac{2}{p}}}$. In this case
$$
\check{x}_k(h_p) = 2\pi \check{v}_k+ g_p(\check{v}_k)
$$
where $\check{v}_k \in [-(1/4)^{1/p},0]$ is determined uniquely by the equation
\begin{equation}
\label{vkk}
g'_p(-\check{v}_k) =-\pi \frac{k}{k+1}.
\end{equation}
The second capacity formula in Proposition \ref{lp} now follows  from Proposition \ref{sym-g}.

\end{proof}
\end{example}

\subsection{Symplectic $\ell^p$-sums (Graphs for $n>2$)}
One can also consider toric domains $X_g$ with projections $\Omega_g \subset \RR^n$, for $n>2$, that are determined by the graph of a function $g \colon \Lambda \subset \RR^{n-1} \to \RR_{\geq 0}$. The corresponding capacity formulas may involve many cases. A simple but interesting set of  examples of this type are the symplectic $\ell^p$-sums
$$
B_p^n= \left\{ (z_1, \dots z_n) \in \CC^n \mid  \pi^{p/2}(|z_1|^{p} + \dots + |z_n|^p) \leq 1\right\},
$$
for $p>0$. Note that $B^n_p = \mu^{-1}(\Omega^n_p)$ where $\Omega^n_p \subset \RR^n_{\geq 0}$ is the region bounded by the coordinate hyperplanes and the graph 
of the function
 $$
 f^n_{p}(x_1, \dots, x_{n-1})=(1-x_1^{p/2} - \dots - x_{n-1}^{p/2})^{\frac{2}{p}}
$$
with domain
$$
\Lambda_{p/2}^{n-1}= \left\{ (x_1, \dots x_n) \in \RR^{n-1}_{\geq 0} \mid x_1^{p/2} + \dots + x_{n-1}^{p/2} \leq 1\right\}.
$$
Each  toric domain $B^n_p$ is symmetric and $B^n_2$ is the standard closed unit ball.

\begin{proposition}\label{sym-sum}
For $p>2$, we have 
\begin{equation}
\label{bpn>2}
c_k(B^n_p) = \bigg( \left(n- (k \mod n)\right)\left\lfloor \frac{k}{n} \right\rfloor ^{\frac{p}{p-2}} + \Big(k \mod n\Big) \left\lceil \frac{k}{n} \right\rceil ^{\frac{p}{p-2}}\bigg)^{\frac{p-2}{p}}.
\end{equation}
For $0<p<2$, we have
\begin{equation}
\label{bpn<2}
c_k(B^n_p)=\bigg( (k' \mod n) \left\lceil \frac{k'}{n} \right\rceil ^{\frac{p}{p-2}} + \Big(n - (k' \mod n)\Big) \left\lfloor \frac{k'}{n} \right\rfloor ^{\frac{p}{p-2}}\bigg)^{\frac{p-2}{p}},
\end{equation}
where $k' = k+n-1$. 
\end{proposition}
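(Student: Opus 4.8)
The plan is to apply the simplified capacity formulas of Theorem~\ref{balance} and Theorem~\ref{balance-c} directly to the domains $B^n_p$, reducing the computation to a single constrained optimization problem on $\bar\partial_+\Omega^n_p$, and then to solve that optimization problem using elementary calculus. First I would record that $B^n_p = \mu^{-1}(\Omega^n_p)$ is symmetric (its defining inequality is symmetric in the moduli $|z_i|$), and that $\Omega^n_p$ is convex for $p\ge 2$ and concave (its complement in $\RR^n_{\ge 0}$ is convex) for $0<p<2$; these are exactly the hypotheses needed to invoke Theorem~\ref{balance} in the first case and Theorem~\ref{balance-c} in the second. The boundary piece $\bar\partial_+\Omega^n_p$ is the graph of $f^n_p$ over the interior of $\Lambda^{n-1}_{p/2}$, together with its closure.

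For the case $p>2$, Theorem~\ref{balance} gives $c_k(B^n_p)=\max\{\langle V(k,n),w\rangle : w\in\bar\partial_+\Omega^n_p\}$, where $V(k,n)$ has $n-(k\bmod n)$ entries equal to $\lfloor k/n\rfloor$ and $(k\bmod n)$ entries equal to $\lceil k/n\rceil$ (by symmetry of the objective under permuting the $v_i$, and since $\|\cdot\|_\Omega$ is permutation-invariant, the exact placement of these entries is immaterial). So I must maximize $\sum_{i=1}^{n} v_i w_i$ subject to $w_n = f^n_p(w_1,\dots,w_{n-1})$, i.e. subject to $\sum_{i=1}^n w_i^{p/2}=1$ with $w_i\ge 0$. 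This is a standard Lagrange-multiplier computation: writing $a_i = v_i$ (the entries of $V(k,n)$) and $s_i=w_i^{p/2}$, one maximizes $\sum a_i s_i^{2/p}$ over the simplex $\sum s_i=1$, and the critical point is $s_i \propto a_i^{\,p/(p-2)}$, yielding the optimal value $\bigl(\sum_i a_i^{\,p/(p-2)}\bigr)^{(p-2)/p}$. Substituting the multiset of values of $a_i$ --- namely $\lfloor k/n\rfloor$ with multiplicity $n-(k\bmod n)$ and $\lceil k/n\rceil$ with multiplicity $k\bmod n$ --- gives precisely \eqref{bpn>2}. One should note the boundary entries of $V(k,n)$ can be $0$ (when $k<n$); these contribute $0^{p/(p-2)}=0$ for $p>2$, consistent with the formula, and the maximum is still attained in the closure $\bar\partial_+\Omega^n_p$ (on a coordinate face), so the Lagrange computation remains valid after passing to the sub-simplex spanned by the nonzero coordinates.

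For the case $0<p<2$, Theorem~\ref{balance-c} gives $c_k(B^n_p)=\min\{\langle\check V(k,n),w\rangle : w\in\bar\partial_+\Omega^n_p\}$, where $\check V(k,n)$ is built from $k'=k+n-1$ and has $(k'\bmod n)$ entries equal to $\lceil k'/n\rceil$ and $n-(k'\bmod n)$ entries equal to $\lfloor k'/n\rfloor$, all entries now being genuinely $\ge 1$ since $v_i\in\NN$. I would minimize $\sum a_i w_i$ over the same constraint surface $\sum w_i^{p/2}=1$; for $0<p<2$ the exponent $2/p>1$ makes $w\mapsto \sum a_i w_i^{2/p}$-type analysis flip direction, and the same Lagrange condition $s_i\propto a_i^{\,p/(p-2)}$ (now with $p/(p-2)<0$) identifies the unique interior critical point as the minimum; the optimal value is again $\bigl(\sum_i a_i^{\,p/(p-2)}\bigr)^{(p-2)/p}$, which with the multiset of $a_i$-values for $\check V(k',n)$ is exactly \eqref{bpn<2}. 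The main obstacle, and the point requiring genuine care rather than bookkeeping, is verifying that the critical point produced by the Lagrange multiplier is actually the global extremum on the correct part of the boundary: in the convex ($p>2$) case one must check the maximum is not attained elsewhere on $\bar\partial_+\Omega^n_p$ (it is, by concavity of the level set and the fact that $\langle V(k,n),\cdot\rangle$ is linear with nonnegative coefficients, attained at the unique interior critical point or a limiting face), and in the concave ($0<p<2$) case one must confirm the constrained minimum of the linear functional over the ``outer'' boundary of the convex complement is the interior critical point rather than a vertex --- which follows because all entries of $\check V$ are strictly positive, forcing the minimizer into the relative interior of $\bar\partial_+\Omega^n_p$. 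Finally I would double-check the degenerate edge cases ($n\mid k$, so all entries of $V$ equal, recovering $c_k(B^n_p)=(\text{const})\cdot$ the intersection with the diagonal as in Corollary~\ref{ball}; and small $k$) to make sure the closed-form expressions specialize correctly.
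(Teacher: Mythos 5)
Your proposal is correct and follows essentially the same route as the paper: reduce via Theorems~\ref{balance} and \ref{balance-c} to a single constrained optimization of $\langle V(k,n),\cdot\rangle$ (resp.\ $\langle\check V(k,n),\cdot\rangle$) over $\sum w_i^{p/2}=1$, solve for the critical point, and substitute the multiset of entries of $V(k,n)$ or $\check V(k,n)$. The only cosmetic difference is that you locate the critical point with a Lagrange multiplier and the change of variables $s_i=w_i^{p/2}$, whereas the paper parametrizes $\bar\partial_+\Omega^n_p$ as a graph, sets the gradient to zero, and solves the resulting linear system via Sherman--Morrison; both yield $s_i\propto V(k,n)_i^{\,p/(p-2)}$ and the closed forms \eqref{bpn>2}, \eqref{bpn<2}.
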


\begin{remark}
For $n=2$ and $p>2$, equation \eqref{bpn>2} simplifies to 
\begin{equation}
\label{simple}
c_k (B^2_p) = 
\begin{cases}
    \frac{k}{2^{\frac{2}{p}}}, & \text{for even  $k$} \\
    {} & {}\\
   \left( \left(\frac{k+1}{2}\right)^{\frac{p}{p-2}}+\left(\frac{k-1}{2}\right)^{\frac{p}{p-2}} \right)^{\frac{p-2}{p}},    & \text{for odd $k$}.
\end{cases}
\end{equation}
As $p \to \infty$, $B^2_p$  converges to the polydisk $P(1,1)$ in the Hausdorff topology and equation \eqref{simple} yields $$\lim_{p \to \infty} c_k (B^2_p) =k$$ which agrees with the formula for  $c_k(P(1,1))$ from \cite{gh}. Similarly,  as $p \to 2^+$ the domains  $B^2_p$  converge to the ball $E(1,1)$ and  \eqref{simple} yields  
$$\lim_{p \to 1^+} c_k (B^2_p) =\begin{cases}
    \frac{k}{2} &, \text{ for even $k$}, \\
    {} & {}\\
  \frac{k+1}{2} &, \text{ for odd $k$}.
\end{cases}
$$
which agrees with the formula for  $c_k(E(1,1))$ from \cite{gh}.

For the case $n=2$ and $p<2$, equation \eqref{bpn<2} simplifies to 
\begin{equation}
\label{simple2}
c_k (B^2_p) = 
\begin{cases}
    \frac{k + 1}{2^{\frac{2}{p}}}, & \text{for odd  $k$} \\
    {} & {}\\
   \left( \left(\frac{k}{2}\right)^{\frac{p}{p-2}}+\left(\frac{k+2}{2}\right)^{\frac{p}{p-2}} \right)^{\frac{p-2}{p}},    & \text{for even $k$}.
\end{cases}
\end{equation}
As $p \to 2^-$ the domains  $B^2_p$  converge to the ball $E(1,1)$ and  \eqref{simple2} yields  
$$\lim_{p \to 1^+} c_k (B^2_p) =\begin{cases}
    \frac{k+1}{2} &, \text{ for odd $k$}, \\
    {} & {}\\
  \frac{k}{2} &, \text{ for even $k$}.
\end{cases}
$$
which again agrees with the formula for  $c_k(E(1,1))$ from \cite{gh}.\ \\

\end{remark}

\begin{proof}

\noindent{\bf Case 1:  $p>2$.} In this case $B^n_p$ is (strictly) convex  and so by Theorem  \ref{balance} 
\begin{eqnarray*}
c_k(X^n_p)& = & \max \left\{ \sum_{i=1}^{n-1} V(k,n)_i x_i   + V(k,n)_n (1-x_1^{p/2} - \dots - x_{n-1}^{p/2})^{\frac{2}{p}} \mid (x_1, \dots, x_{n-1}) \in \Lambda_{p/2}^{n-1} \right\}.
\end{eqnarray*}
(For the moment it is useful to forget that we know the components $V(k,n)_i$ of $V(k,n)$.)

The function $$F(x_1, \dots, x_{n-1}) =\sum_{i=1}^{n-1} V(k,n)_i x_i   + V(k,n)_n (1-x_1^{p/2} - \dots - x_{n-1}^{p/2})^{\frac{2}{p}}$$ attains it maximum value at its unique  critical point which we now solve for. The equation $\frac{\partial F}{\partial x_i}=0$ is equivalent to 
\begin{equation*}
\label{ }
x_i^{p/2} = \left(\frac{V(k,n)_i}{V(k,n)_n}\right)^{\frac{p}{p-2}}\left(1-x_1^{p/2} - \dots - x_{n-1}^{p/2}\right).
\end{equation*}
Setting $y_i = x_i^{p/2}$ and $\alpha_i = \left(\frac{V(k,n)_i}{V(k,n)_n}\right)^{\frac{p}{p-2}}$ we can rewrite this as 
\begin{equation*}
\label{ }
\alpha_i y_1 + \dots + (\alpha_i +1)y_i + \dots +\alpha_iy_{n-1} = \alpha_i.
\end{equation*}
Hence the critical point of $F$ corresponds to the unique solution of the 
linear system
\begin{equation*}
\label{ }
\left(\mathbb{1}_{n-1} + \vec{\alpha} \cdot (\vec{1})^T\right)\vec{y} = \vec{\alpha}
\end{equation*}
where $(\vec{1})^T = (1,1,\dots,1)$. Using the Sherman-Morrison formula we arrive at the expression
\begin{equation*}
\label{ }
\vec{y} = \left(\frac{1}{1+\sum_{i=1}^{n-1}\alpha_i}\right) \vec{\alpha}.
\end{equation*}
Hence
\begin{eqnarray*}
x_j & = & \left(1+\sum_{i=1}^{n-1}\left(\frac{V(k,n)_i}{V(k,n)_n}\right)^{\frac{p}{p-2}} \right)^{\frac{-2}{p}}  \left(V(k,n)_n\right)^{\frac{-2}{p-2}} \left(V(k,n)_j\right)^{\frac{2}{p-2}}\\
{} &= &\left( \sum_{i=1}^{n} \left( V(k,n)_i \right)^{\frac{p}{p-2}} \right)^{\frac{-2}{p}}\left(V(k,n)_j\right)^{\frac{2}{p-2}}.
\end{eqnarray*}
Evaluating $F$ at this critical point we get
\begin{equation}
c_k(B^n_p) = \bigg( \left(n- (k \mod n)\right)\left\lfloor \frac{k}{n} \right\rfloor ^{\frac{p}{p-2}} + \Big(k \mod n\Big) \left\lceil \frac{k}{n} \right\rceil ^{\frac{p}{p-2}}\bigg)^{\frac{p-2}{p}},
\end{equation}
as desired.
 
 \noindent{\bf Case 2:  $0<p<2$.} Here,  $B_p^n$ is concave, and 
 \begin{eqnarray*}
c_k(X^n_p) & = & \min \left\{ \sum_{i=1}^{n-1} \check{V}(k,n)_i x_i   + \check{V}(k,n)_n (1-x_1^{p/2} - \dots - x_{n-1}^{p/2})^{\frac{2}{p}} \mid (x_1, \dots, x_{n-1}) \in \Lambda_{p/2}^{n-1} \right\}.
\end{eqnarray*}
Arguing as above, it follows that the function $$\sum_{i=1}^{n-1} \check{V}(k,n)_i x_i   + \check{V}(k,n)_n (1-x_1^{p/2} - \dots - x_{n-1}^{p/2})^{\frac{2}{p}}$$ has a unique global minimum at 
\begin{equation}
x_j =   \left( \sum_{i=1}^{n} \left( \check{V}(k,n)_i \right)^{\frac{p}{p-2}} \right)^{\frac{-2}{p}} \left(\check{V}(k,n)_j\right)^{\frac{2}{p-2}}.
\end{equation}
Hence
\begin{equation}
c_k(B^n_p)=\bigg( (k' \mod n) \left\lceil \frac{k'}{n} \right\rceil ^{\frac{p}{p-2}} + \Big(n - (k' \mod n)\Big) \left\lfloor \frac{k'}{n} \right\rfloor ^{\frac{p}{p-2}}\bigg)^{\frac{p-2}{p}},
\end{equation}
where $k' = k+n-1$. 

\end{proof}

\subsection{Symmetric toric polytopes} Let $P$ be a convex polytope in $\RR^n_{\geq 0}$ with verticies $\{p_j\}_{j=1,\dots,N}$. Note that $P$ is symmetric  if every permutation matrix of $\RR^n$ maps the set of vertices of $P$ onto itself. 

\begin{proposition}\label{poly}
If $P$ is symmetric and the toric domain $\mu^{-1}(P)$ is convex then 
\begin{equation}
\label{convexpoly}
c_k(\mu^{-1}(P))= \max_j \{ \langle V(k,n), p_j \rangle\}.
\end{equation}
\end{proposition}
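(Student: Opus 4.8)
The plan is to read the formula off directly from Theorem \ref{balance}, which already reduces the computation of $c_k(\mu^{-1}(P))$ to a single linear optimization problem, and then to invoke the elementary fact that a linear functional on a polytope is maximized at a vertex.

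Since $P$ is symmetric and $\mu^{-1}(P)$ is convex, Theorem \ref{balance} gives
$$c_k(\mu^{-1}(P)) = \max\{\langle V(k,n), w\rangle \mid w \in \bar\partial_+ P\}.$$
The first step is to show this maximum is unchanged if taken over all of $P$. One inequality is free, since $\bar\partial_+ P \subseteq P = \mathrm{conv}\{p_1,\dots,p_N\}$; this already yields $c_k(\mu^{-1}(P)) \le \max_j \langle V(k,n), p_j\rangle$. For the reverse inequality I would argue as follows. Because $\mu^{-1}(P)$ is convex, $\widehat{P}$ is convex, and a standard averaging over coordinate sign flips then shows $0 \in P$ (indeed that $P$ is downward closed in $\RR^n_{\ge 0}$). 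Also, $P$ is full-dimensional and contained in $\RR^n_{\ge 0}$, so its interior lies in $\RR^n_{>0}$ and hence $P \cap \RR^n_{>0}$ is dense in $P$. Given $w \in P \cap \RR^n_{>0}$, the ray $\RR_{\ge 0}\,w$ meets the compact convex set $P$ in a segment $[0,\rho w]$ with $\rho = \rho(w) \ge 1$ and $\rho w \in \partial P \cap \RR^n_{>0} \subseteq \bar\partial_+ P$; since every entry of $V(k,n)$ is a nonnegative integer, $\langle V(k,n), \rho w\rangle = \rho\,\langle V(k,n), w\rangle \ge \langle V(k,n), w\rangle$. Combining this with the density of $P \cap \RR^n_{>0}$ in $P$ gives $\max_{w \in P}\langle V(k,n), w\rangle \le \max_{w \in \bar\partial_+ P}\langle V(k,n), w\rangle$, hence equality. (Alternatively, this equality is precisely the content of the remark following Theorem \ref{many} applied to the nonnegative vector $V(k,n)$, so this step can be compressed into a citation.)

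Finally, $w \mapsto \langle V(k,n), w\rangle$ is linear, so its maximum over the polytope $P$ is attained at a vertex, i.e. $\max_{w \in P}\langle V(k,n), w\rangle = \max_j \langle V(k,n), p_j\rangle$, which is \eqref{convexpoly}. I do not expect a genuine obstacle here: all of the substance is carried by Theorem \ref{balance}, and the only point requiring mild care is verifying that the one-sided restriction to the outer boundary $\bar\partial_+ P$ does not discard a maximizer — handled either by the radial-projection argument above or by the quoted remark.
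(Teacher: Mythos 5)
Your argument is correct and is essentially the paper's argument, which is simply stated as ``This follows from Theorem \ref{balance} and standard linear programming.'' The extra care you take in passing from $\bar{\partial}_+ P$ back to all of $P$ is sound (and, as you note, can be compressed to a citation of the remark after Theorem \ref{many}, since in the proof of Theorem \ref{balance} the quantity computed is really $\|V(k,n)\|_{\Omega}$, the maximum over all of $\Omega$), after which the reduction to vertices is the usual fact that a linear functional on a polytope attains its maximum at a vertex.
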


\begin{proof}
This follows from Theorem  \ref{balance}  and standard linear programming.
\end{proof}

\begin{remark}The simplicity of this formula for the Gutt-Hutchings (conjecturally Ekeland-Hofer) capacities of this class of star-shaped domains in any even dimension is quite noteworthy. Especially since any symmetric convex toric domain can be $C^0$-approximated by such a domain. The formula is used to provide models in Section \ref{open}.
\end{remark}

\begin{example}\label{diagonal}
For $r \in [1/2,1]$, let $\Omega_r$ be the symmetric convex polytope in $\RR^2_{\geq 0}$ with vertices
$$
\{(0,0), (1,0), (0,1), (r,r)\}.
$$
 For even values of $k$,  \eqref{convexpoly} implies that  
\begin{equation*}
\label{ }
c_k(\mu^{-1}(\Omega_r)) = kr.
\end{equation*}
For odd values of $k$,  \eqref{convexpoly} implies that
\begin{eqnarray*}
c_k(\mu^{-1}(\Omega_r)) & = & \max \left\{ \frac{k+1}{2}, \,kr\right\}.
\end{eqnarray*}
This  piecewise linear function of $r$ is constant and equal to $ \frac{k+1}{2}$ for $r \in \left[\frac{1}{2},\frac{k+1}{2k}\right]$,  and then grows linearly as $kr$.
\end{example}


\section{The proofs of Theorems   \ref{novolume}, \ref{mutual}, and \ref{blind}}\label{proofs}

\subsection{Preliminaries} Here we will use the following useful simplifications of Propositions \ref{sym} and \ref{sym-g}.
\begin{corollary}
\label{f}
Let $f$ be a function in $\widehat{\mathcal{V}}(\lambda)$ such that $f'(0) \in (-1/2,0)$. Then $X_f$ is a (strictly) convex toric domain in $\RR^4$ with a smooth boundary. For even values of $k$ we have 
\begin{equation}
c_k (X_f) = k x(f),
\end{equation}
where $x(f)$ is the unique fixed point of $f$. The first capacity of $X_f$ is equal to $\lambda$, and for odd $k>1$ we have 
 \begin{equation}
c_k (X_f) = \frac{k-1}{2}x_k + \frac{k+1}{2}f(x_k)
\end{equation}
where $x_k$ is the unique solution of
\begin{equation}
\label{hit}
f'= -\frac{k-1}{k+1}.
\end{equation}
The sequence of points $x_k$ increases monotonically and converges to $x(f)$.
\end{corollary}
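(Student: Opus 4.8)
The plan is to deduce Corollary \ref{f} directly from Proposition \ref{sym} together with the definition of the class $\widehat{\mathcal{V}}(\lambda)$, checking that the hypothesis $f'(0)\in(-1/2,0)$ puts us into the first (generic) branch of each case of Proposition \ref{sym}. First I would verify the structural claims: properties (f1)--(f4) give that $\widehat\Omega_f$ is compact and convex, so $X_f$ is a convex toric domain; strict concavity $f''<0$ makes it strictly convex; and since $f'(0)\in(-1/2,0)\subset(-\infty,0)$ with $\lim_{x\to\lambda^-}f'(x)=-\infty$ forced by $f^{-1}=f$ and $f'(0)>-\infty$ (the graph being its own reflection across the diagonal exchanges the endpoint slopes $f'(0)$ and $1/f'(\lambda)$), the derivative $f'$ is bounded away from $0$ on any $[0,\lambda-\epsilon]$ and blows up at $\lambda$; Lemma \ref{reg} then gives smoothness of $\partial X_f$. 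For the even-$k$ formula I would just quote Proposition \ref{sym} verbatim: $c_k(X_f)=kx(f)$.

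Next, for the odd case I would observe that for every odd $k\ge 1$ we have $-\tfrac{k-1}{k+1}\le -\tfrac{0}{2}=0$... more to the point, $-\tfrac{k-1}{k+1}\in[-1,0)$ and in fact $-\tfrac{k-1}{k+1}\ge -\tfrac12$ exactly when $k\le 3$; but the condition in Proposition \ref{sym} is $-\tfrac{k-1}{k+1}<f'(0)$, and since $f'(0)>-\tfrac12\ge -\tfrac{k-1}{k+1}$ holds for $k=1,3$, while for $k\ge 5$ we have $-\tfrac{k-1}{k+1}<-\tfrac12<f'(0)$ as well. Hence for all odd $k$ (with the understanding that for $k=1$ the critical-point equation degenerates) we are in the first branch, so $c_k(X_f)=\tfrac{k-1}{2}x_k+\tfrac{k+1}{2}f(x_k)$ with $f'(x_k)=-\tfrac{k-1}{k+1}$. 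For $k=1$ this reads $f'(x_1)=0$, i.e. $x_1=0$ by (f4) and the strict monotonicity of $f'$, giving $c_1(X_f)=f(0)=\lambda$ by (f2); this matches the claimed value $\lambda$, so I would state the $k=1$ case separately as in the corollary and handle $k>1$ odd by the displayed formula.

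Finally, for the monotone convergence of $\{x_k\}$: the map $k\mapsto -\tfrac{k-1}{k+1}=-1+\tfrac{2}{k+1}$ is strictly increasing in $k$ and tends to $-1$ as $k\to\infty$. Since $f'$ is continuous and strictly decreasing on $(0,\lambda)$ (because $f''<0$), its inverse is strictly decreasing, so $x_k=(f')^{-1}\!\big(-\tfrac{k-1}{k+1}\big)$ is strictly increasing in $k$. Its limit is $(f')^{-1}(-1)$, which is precisely the fixed point $x(f)$ since $f^{-1}=f$ forces $f'(x(f))=-1$ (differentiate the relation $f(f(x))=x$ at the fixed point, or note that the tangent line at a fixed point of an involutive decreasing graph must have slope $-1$). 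This gives $x_k\nearrow x(f)$, completing the argument.

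I expect no serious obstacle here — the corollary is essentially a bookkeeping specialization of Proposition \ref{sym}. The one point requiring a little care is confirming that the hypothesis $f'(0)\in(-1/2,0)$ indeed selects the non-degenerate branch of the odd-$k$ formula \emph{for every} odd $k$, not just small $k$; this is immediate once one notes $-\tfrac{k-1}{k+1}$ increases to $-1$... wait, it increases \emph{toward} $-1$ from below only for... let me restate: $-\tfrac{k-1}{k+1}$ is negative, equals $0$ at $k=1$, $-\tfrac13$ at $k=2$, $-\tfrac12$ at $k=3$, and decreases toward $-1$; so the inequality $-\tfrac{k-1}{k+1}<f'(0)$ with $f'(0)\in(-\tfrac12,0)$ holds for $k=1$ trivially and for $k\ge 2$ since then $-\tfrac{k-1}{k+1}\le -\tfrac13<f'(0)$ fails to be the sharp case only at... in all cases $-\tfrac{k-1}{k+1}\le -\tfrac13<0$, and we need it $<f'(0)$; this can fail only if $f'(0)\le -\tfrac{k-1}{k+1}$, impossible for $k\le 3$ and, for $k\ge 4$, $-\tfrac{k-1}{k+1}\le -\tfrac35<-\tfrac12<f'(0)$. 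So the branch selection is uniform in $k$, and the rest is the monotonicity computation above.
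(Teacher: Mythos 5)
Your overall strategy — specialize Proposition \ref{sym} and check that the hypothesis $f'(0)\in(-1/2,0)$ selects the first branch for every odd $k>1$ — is exactly right, and the capacity formulas and the monotone convergence $x_k\nearrow x(f)$ are correctly derived (after you self-correct the sign of $k\mapsto -\tfrac{k-1}{k+1}$). But there is a genuine error in the smooth-boundary argument, and it runs in precisely the opposite direction from what is needed.

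You assert $\lim_{x\to\lambda^-}f'(x)=-\infty$, saying this is ``forced by $f^{-1}=f$ and $f'(0)>-\infty$.'' That is backwards. Your own parenthetical identity $f'(0)=1/f'(\lambda)$ shows that $f'(\lambda)=1/f'(0)$; since $f'(0)\in(-1/2,0)$ is \emph{nonzero}, this gives $f'(\lambda)\in(-\infty,-2)$, a \emph{finite} number. (Blow-up would occur only if $f'(0)=0$, which the hypothesis excludes.) Worse, if $f'$ really did blow up at $\lambda$ you could not invoke Lemma \ref{reg} at all: that lemma explicitly requires $g'$ to be \emph{finite} and bounded away from zero. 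As written, your argument is internally inconsistent. The correct reasoning is the reverse of yours: the hypothesis $f'(0)<0$ is precisely what guarantees, via $f'(\lambda)=1/f'(0)$ and the reflection symmetry $f^{-1}=f$, that $f$ extends smoothly across $\lambda$ with a finite endpoint slope; then $f''<0$ traps $f'$ in $[1/f'(0),\,f'(0)]\subset(-\infty,0)$, so $f'$ is finite and bounded away from zero, and Lemma \ref{reg} applies.

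A minor additional inaccuracy: for $k=1$ the branch condition in Proposition \ref{sym} reads $0<f'(0)$, which fails since $f'(0)<0$, so the ``otherwise'' branch applies directly and gives $c_1=\tfrac{1+1}{2}\lambda=\lambda$. Your alternative via a ``degenerate first branch'' with $f'(x_1)=0$ forcing $x_1=0$ presupposes $f'(0)=0$, which is false here; it lands on the right value only by coincidence. The rest of the branch-selection argument for odd $k\ge 3$ (using $-\tfrac{k-1}{k+1}\le-\tfrac12<f'(0)$) and the monotone-convergence argument are sound.
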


\begin{corollary}
\label{h}
Let $h$ be a function in $\widehat{\mathcal{C}}(\lambda)$ such that $h'(\lambda) \in (-1/2,0)$. Then $X_h$ is a strictly concave toric domain in $\RR^4$ with a smooth boundary. For odd values of $k$,
\begin{equation}
\label{subset}
c_k (X_h) = (k+1) x(h)
\end{equation}
where $x(h)$ is the unique fixed point of $h$. For even values of $k$ we have  
 \begin{equation*}
\label{}
c_k (X_h) = \frac{k+2}{2}\check{x}_k + \frac{k}{2}h(\check{x}_k)
\end{equation*}
where $\check{x}_k$ is the unique solution of 
\begin{equation}
\label{hitchecks}
h'= -\frac{k+2}{k}.
\end{equation}
The sequence of points $\check{x}_k$ increases monotonically and converges to $x(h)$.
\end{corollary}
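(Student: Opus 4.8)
The plan is to deduce Corollary~\ref{h} directly from Proposition~\ref{sym-g}, using the extra hypothesis $h'(\lambda)\in(-1/2,0)$ only to rule out the degenerate second case in the even-$k$ branch of that proposition and to secure smoothness of the boundary; the remaining assertions are then short bookkeeping.

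First I would handle the smoothness of $\partial X_h$. From (h4), $h'$ is strictly increasing on $(0,\lambda]$, so $h'(x)\le h'(\lambda)<0$ everywhere and $h'$ is bounded away from $0$. To invoke Lemma~\ref{reg} I also need $h$ smooth with $h'$ finite up to the endpoint $0$, and this is where the self-inverse property (h5) enters: since $h$ is smooth near $\lambda$ with $h'(\lambda)\neq0$, it admits a smooth local inverse there, and $h=h^{-1}$ then forces $h$ to be smooth near $0$ with $h'(0)=1/h'(\lambda)$, a number in $(-\infty,-2)$ because $h'(\lambda)\in(-1/2,0)$. Thus $h'$ is finite and bounded away from $0$ on all of $[0,\lambda]$, Lemma~\ref{reg} applies, and strict concavity of $X_h$ is immediate from $h''>0$.

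Next, the odd-$k$ formula $c_k(X_h)=(k+1)x(h)$ is just the odd case of Proposition~\ref{sym-g} verbatim; nothing extra is needed. For even $k\ge2$, Proposition~\ref{sym-g} gives the stated formula $c_k(X_h)=\tfrac{k+2}{2}\check{x}_k+\tfrac{k}{2}h(\check{x}_k)$ precisely when $-\tfrac{k}{k+2}<h'(\lambda)$. Since $t\mapsto t/(t+2)$ is increasing and equals $\tfrac12$ at $t=2$, one has $-\tfrac{k}{k+2}\le-\tfrac12<h'(\lambda)$ for all even $k\ge2$, so the nondegenerate case always holds. I would also record that the target slope $-\tfrac{k+2}{k}$ lies strictly between $h'(0)=1/h'(\lambda)<-2$ and $h'(\lambda)>-\tfrac12$, which is exactly what makes $\check{x}_k$ a well-defined interior point of $(0,\lambda)$, reusing the computation of $h'(0)$ from the first step.

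Finally, for the monotone convergence of $\check{x}_k$: as $k\to\infty$ through even values, $-\tfrac{k+2}{k}$ increases to $-1$, and since $h'$ is strictly increasing with $h'(x(h))=-1$, the points $\check{x}_k=(h')^{-1}(-\tfrac{k+2}{k})$ increase monotonically to $(h')^{-1}(-1)=x(h)$. The only step carrying any real content is the smoothness argument — extracting finiteness of $h'(0)$ from the self-inverse condition (h5) — and I do not anticipate a genuine obstacle there; everything else is arithmetic with the formula of Proposition~\ref{sym-g}.
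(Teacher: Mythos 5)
Your proof is correct and takes the only natural route: it specializes Proposition~\ref{sym-g} to the hypothesis $h'(\lambda)\in(-1/2,0)$, which the paper itself treats as a routine simplification and leaves without explicit proof. Your careful derivation of $h'(0)=1/h'(\lambda)\in(-\infty,-2)$ from the self-inverse condition (h5), needed both for Lemma~\ref{reg} and for locating $\check{x}_k$ in the interior of $(0,\lambda)$, together with the observation that $-k/(k+2)\le -1/2<h'(\lambda)$ always selects the nondegenerate branch for even $k$, and the monotone convergence of $\check{x}_k$ via $h'(x(h))=-1$, matches precisely the bookkeeping the corollary is implicitly asking the reader to carry out.
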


\subsubsection{Symmetric perturbations}\label{pert}
Let $g$ be a function in either $\widehat{\mathcal{V}}(\lambda)$ or $\widehat{\mathcal{C}}(\lambda)$. Let $\beta \colon [0,\lambda] \to \RR$ be a smooth function with support $[a,b]$ contained in $(0,x(g))$. Then for all sufficiently small $\epsilon>0$ the restriction of the function $g+\epsilon \beta$ to $[0,x(g)]$ can be extended to a unique function in the same set $\widehat{\mathcal{V}}(\lambda)$ or $\widehat{\mathcal{C}}(\lambda)$ as $g$.  This (symmetric) extension of $g$ is of the form  $g+\epsilon (\beta+\tilde{\beta})$ for a unique function $\tilde{\beta} \colon [0,\lambda] \to \RR$ with support $[g(b),g(a)] \subset (x(g), \lambda)$. A simple computation, using integration by parts, implies that 
\begin{equation}
\label{sameint}
\int_0^{\lambda} \beta = \int_0^{\lambda} \tilde{\beta}.
\end{equation}

\subsection{Proof of Theorem \ref{novolume}}
Let $f$ be a function in $\widehat{\mathcal{V}}(1)$ as in Corollary \ref{f}. For an odd integer $j>1$ choose a smooth bump function $\eta_j \colon [0,1] \to \RR_{\geq 0}$ such that  
\begin{itemize}
  \item[(i)] the support of $\eta_j$ is contained in $(x_j, x_{j+2})$.
  \item[(ii)] $\int_0^1 \eta_j =\frac{1}{2}.$
\end{itemize}
For all  $\delta \geq 0$ small enough the function $f+\delta(\eta_j+\tilde{\eta}_j)$ is in $\widehat{\mathcal{V}}(1)$ and we set $V_{\delta} = X_{f+\delta(\eta_j+\tilde{\eta}_j)}.$ Condition (i) and Corollary \ref{f} imply that $c_k(V_{\delta}) = c_k(V_0)$ for all $k \in \NN$. Condition (ii) and \eqref{sameint} imply that 
$$\mathrm{volume}(V_{\delta})=\mathrm{volume}(V_0) +\delta.$$

\subsection{Proof of Theorem \ref{mutual}}
For even values of $j$ the desired family of domains $V^j_{\delta}$ will be comprised of strictly concave toric domains. For odd values of $j$ it will be comprised of strictly convex toric domains.

\subsubsection{Even Indices}

Let $h$ be a function in $\widehat{\mathcal{C}}(1)$,  as in Corollary \ref{h},  with  $h'(1) \in (-1/2,0)$. For  $j \in 2\NN$, let $\beta_j \colon [0,1] \to \RR$ be a smooth function with the following three properties: 
\begin{itemize}
  \item[(i)] $\beta_j=\frac{2}{j}$ in a neighborhood of $\check{x}_j$,
  \item[(ii)] $\beta_j(x)$ is compactly supported in $(\check{x}_{j-2} ,\check{x}_{j+2})$,
  \item[(iii)] $\int_0^1 \beta_j =0.$
\end{itemize}
For all sufficiently small $\delta\geq 0$ the function $h+\delta (\beta_j+\tilde{\beta}_j)$ is in $\widehat{\mathcal{C}}(1)$. Restricting ourselves to such values of $\delta$ we set $V^j_{\delta} = X_{h+\delta (\beta_j+\tilde{\beta}_j)}.$ Properties (i) and (ii) imply that the solution of 
\begin{equation*}
\label{}
(h+\delta(\beta_j+\tilde{\beta}_j))'= -\frac{k+2}{k}
\end{equation*}
is the same for all for all even $k$ and all $\delta>0$. By  Corollary \ref{h} and (i), we then have 
\begin{eqnarray*}
c_j(V^j_{\delta}) & = & \frac{j+2}{2}\check{x}_j + \frac{j}{2}\left(h(\check{x}_j)+\delta (\beta_j(\check{x}_j)+\tilde{\beta}_j(\check{x}_j))\right) \\
{} & = & c_j(V^j_0) + \delta.
\end{eqnarray*}
On the other hand,  (ii) implies that
$c_k(V^j_{\delta}) =c_k(V^j_0)$ for all even $k \neq j$. Similarly, (ii) implies that $x(h+\delta (\beta_j+\tilde{\beta}_j)) =x(h)$  for all sufficiently small $\delta>0$, and so $c_k(V^j_{\delta}) =c_k(V^j_0)$ for all odd $k$ as well. Finally, it follows from (iii) and \eqref{sameint} that $$\mathrm{volume}(V^j_{\delta})=\mathrm{volume}(V^j_0).$$

\subsubsection{Odd Indices}
Here the argument is entirely similar except for a few details in the case $j=1$.  We describe this case and leave the others to the reader. Let $f$ be a function in $\widehat{\mathcal{V}}(1)$ as in Corollary \ref{f}. Let $\beta_1 \colon [0,1] \to \RR$ be a smooth function such that  
\begin{itemize}
  \item[(i)] $\beta_1=1$ in a neighborhood of $0$.
  \item[(ii)] $\beta_1(x) =0$ for all $x \geq x_3/2$. 
  \item[(iii)] $\int_0^1 \beta_1 =0.$
\end{itemize}
For  all  sufficiently small $\delta>0$, the restriction of the function $f+\delta\beta_1$ to $[0,x(f)]$ can be extended to a unique function in $ \widehat{\mathcal{V}}(1 +\delta)$ which we again denote by $f+\delta(\beta_1 +\tilde{\beta}_1)$. Set $V^1_{\delta} = X_{f+\delta (\beta_1+\tilde{\beta}_1)}.$

By (i), we then have $c_1(V^1_{\delta}) =1+\delta = c_1(V^1_0) +\delta$. It follows from (ii) that $x(V^1_{\delta}) =x(f)$ and so $c_k(V^1_{\delta})) =c_k(V^1_0))$ for all even $k$. As well, for odd values of $k>1$, property (ii) implies that the solution of 
\begin{equation}
(f+\epsilon(\beta_1 +\tilde{\beta}_1))'= -\frac{k-1}{k+1}.
\end{equation}
is identical to that of \eqref{hit}. Hence, $c_k(V^1_{\delta})) =c_k(V^1_0))$ for these values of  $k$ as well. Finally, it follow from (iii)  and \eqref{sameint} that $$\mathrm{volume}(V^1_{\delta}))=\mathrm{volume}(V^1_0)).$$

\subsection{Proof of Theorem \ref{blind}}
\begin{lemma}\label{ready}
 Let $h$ be a function in $\widehat{\mathcal{C}}(\lambda)$ with $h'(\lambda) \in (-1/2,0)$. Let  $\rho \colon [0,1] \to \RR_{\geq 0}$ be a nonconstant smooth function such that  
\begin{itemize}
  \item[(i)] the support of $\rho$  does not contain $x(h)$ or $\check{x}_k$ for and even $k \in \NN$.
  \item[(ii)] $\int_0^1 \rho =0.$
\end{itemize}
Then, for all sufficiently small $\delta>0$ the function $h+\delta(\rho+ \tilde{\rho})$ is in $\widehat{\mathcal{C}}(\lambda)$, we have  $$c_k(X_{h+\delta(\rho+ \tilde{\rho})})=c_k(X_{h})$$ for all $k \in \NN$,  and
$$\mathrm{volume}(X_{h+\delta(\rho+ \tilde{\rho})})=\mathrm{volume}(X_{h}).$$
\end{lemma}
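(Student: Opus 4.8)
The strategy is to verify the three conclusions separately, each time pushing the relevant hypothesis on $\rho$ through the symmetric-perturbation machinery of \S\ref{pert} and the collapsed capacity formula of Corollary \ref{h}. First I would check that $h+\delta(\rho+\tilde{\rho})$ genuinely lies in $\widehat{\mathcal{C}}(\lambda)$ for small $\delta$. Properties (h1), (h2), (h3) are stable under small smooth perturbations supported away from the endpoints $0$ and $\lambda$ (here one must note that $\rho$ has support in $(0,x(h))$ so $\tilde{\rho}$ has support in $(x(h),\lambda)$, hence both vanish near $0$ and $\lambda$, so $h(\lambda)=0$ and $h'(\lambda)\in(-1/2,0)$ persist); property (h4), strict convexity $h''>0$, is an open condition on a compact interval and so survives multiplication of the perturbation by a sufficiently small $\delta$; and (h5), the symmetry $g^{-1}=g$, holds \emph{by construction} of $\tilde{\rho}$ as described in \S\ref{pert}. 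This is the routine part.

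Next, the volume claim: by \eqref{sameint} we have $\int_0^\lambda\rho=\int_0^\lambda\tilde\rho$, and hypothesis (ii) says $\int_0^\lambda\rho=0$, so $\int_0^\lambda(\rho+\tilde\rho)=0$; since the volume of $X_g$ is a fixed multiple of $\int_0^\lambda g$ (up to the constant coming from the moment-map normalization), $\mathrm{volume}(X_{h+\delta(\rho+\tilde\rho)})=\mathrm{volume}(X_h)$. The capacity claim is the heart of the matter and splits by parity. For odd $k$, Corollary \ref{h} gives $c_k(X_g)=(k+1)x(g)$ where $x(g)$ is the fixed point of $g$; since the perturbation is supported away from $x(h)$ by (i), and $x(h)$ is characterized by $g(x)=x$ locally together with $g'(x(h))=-1$, the perturbed function still has the same fixed point $x(h)$, so all odd capacities are unchanged. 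For even $k$, Corollary \ref{h} gives $c_k(X_g)=\tfrac{k+2}{2}\check x_k+\tfrac{k}{2}g(\check x_k)$ where $\check x_k$ solves $g'=-\tfrac{k+2}{k}$; hypothesis (i) says the support of $\rho$ (and hence of $\tilde\rho$, which sits on the mirror side) avoids every $\check x_k$, so near each $\check x_k$ the perturbed function agrees with $h$ to all orders, whence $\check x_k$ and the value $g(\check x_k)$ are unchanged and $c_k$ is unchanged.

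The one genuine subtlety — the step I would be most careful about — is the claim that $\check x_k(h+\delta(\rho+\tilde\rho))=\check x_k(h)$. It is not quite enough that $\rho$ vanish \emph{at} the points $\check x_k$; one needs the \emph{solution} of the equation $g'=-\tfrac{k+2}{k}$ to be unaffected, which is why hypothesis (i) asks the support of $\rho$ to avoid these points (an open condition), not merely their values. Two things must be argued here. First, for $\delta$ small enough the perturbed function still has a \emph{unique} critical point for the relevant linear functional; this follows from strict convexity (h4) of the perturbed function, established above. Second, one must rule out the derivative attaining the value $-\tfrac{k+2}{k}$ somewhere inside $\mathrm{supp}(\rho)\cup\mathrm{supp}(\tilde\rho)$ for small $\delta$: on the complement of these supports $g'=h'$ so the unique solution there is exactly $\check x_k(h)$, and since there are only finitely many even indices $k$ with $\check x_k$ in the (compact) region where $h'$ ranges over $(-\infty,-1)$ outside the supports — or more precisely, since the supports are a fixed compact set on which $|h'+\tfrac{k+2}{k}|$ is bounded below by a positive constant for each of the finitely many relevant $k$ — choosing $\delta$ smaller than that constant divided by $\sup|(\rho+\tilde\rho)'|$ forces the equation $g'=-\tfrac{k+2}{k}$ to have no solution inside the supports. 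Hence $\check x_k$ is pinned at $\check x_k(h)$ for all even $k$ simultaneously, and since infinitely many $\check x_k$ accumulate at $x(h)$ which is itself outside $\mathrm{supp}(\rho)$, the finiteness used above is legitimate. Assembling the three verifications completes the proof; the subsequent proof of Theorem \ref{blind} then only needs to exhibit one such $\rho$ that changes the ECH capacities, using the formulas of \cite{choi}.
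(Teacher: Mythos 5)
Your proof is correct and follows the same route as the paper's: the paper's proof is a single sentence citing Corollary~\ref{h} for the capacity claim and \eqref{sameint} for the volume claim, and you have simply unwound those two citations in detail (parity split via Corollary~\ref{h}, the integral identity for volume, and the \S\ref{pert} machinery for membership in $\widehat{\mathcal{C}}(\lambda)$). One small remark: in the paragraph you flag as the ``genuine subtlety,'' the second point (bounding $\delta$ by $\inf|h'+\tfrac{k+2}{k}|/\sup|(\rho+\tilde\rho)'|$ to exclude solutions inside the supports) is redundant given your first point — once $h+\delta(\rho+\tilde\rho)$ is strictly convex its derivative is strictly increasing, so each value is attained at most once, and since $\check x_k(h)$ lies outside $\mathrm{supp}(\rho)\cup\mathrm{supp}(\tilde\rho)$ and still satisfies the equation there, it is automatically the unique solution; no further $k$-uniform estimate on $\delta$ is needed.
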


\begin{proof}
The assertion about the capacities follows from (i) and Corollary \ref{h}. The assertion about the volumes follows from (ii) and \eqref{sameint}. 
\end{proof}

We will define explicit  functions $h$ and $\rho$ and prove that the symplectomorphism type of $X_{h+\delta(\rho+ \tilde{\rho})}$ varies with $\delta$ for all sufficiently small $\delta$. 
To do this we will use the Embedded Contact Homology capacities, $\{c_k^{ECH}\}_{k \in \NN}$,  defined by Hutchings in \cite{ech} and the formula for the ECH capacities of concave toric domains established by  Choi, Cristofaro-Gardiner, Frenkel, Hutchings and Ramos in \cite{choi}. In particular, we will prove, in Lemma \ref{capcomp} that 
\begin{equation*}
\label{capcomp}
c^{\mathrm{ECH}}_9(X_{h+\delta (\rho+ \tilde{\rho})}) =c^{\mathrm{ECH}}_9(X_{h})+\delta.
\end{equation*}

\begin{remark}
Given the results of \cite{chr}, a similar argument should work for any choices of $h$ and $\rho$, as above. In particular, it should be possible to show that $c^{\mathrm{ECH}}_k(X_{h+\delta(\rho+ \tilde{\rho})}) =c^{\mathrm{ECH}}_k(X_{h})+\delta$ for some $k \in \NN$.
\end{remark}

\begin{remark}
It is well known that the ECH capacities have their own blind spots corresponding to singular boundaries. In particular, $E(1,2)$ and $P(1,1)$ have the same ECH capacities (and volumes). It is not known whether star-shaped regions in $\RR^4$ with smooth boundaries and the same ECH capacities must be symplectomorphic.
\end{remark}

\medskip

\noindent{\bf ECH capacities of concave toric domains.} We recall here the formula from \cite{choi}  for the ECH capacities of a concave toric domain $X_{\Omega} \subset \RR^4$. 

\begin{theorem}[\cite{choi}] If $X_{\Omega}$ is a concave toric domain in $\RR^4$ and the ordered weight expansion of $\Omega$ is $$\vec{w}(\Omega) =\{w_1, w_2, w_3 \dots \},$$ then 
\begin{equation}
\label{choi}
c^{\mathrm{ECH}}_k(X_{\Omega})= \max\left\{  \sum_{i=1}^k d_iw_i \,\middle|\, \sum_{i=1}^k \frac{d_i^2+d_i}{2} \leq k, \, d_i \in \{0\} \cup \NN \right\}.
\end{equation}
\end{theorem}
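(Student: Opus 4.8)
The final statement to establish is the ECH-capacity formula of Choi--Cristofaro-Gardiner--Frenkel--Hutchings--Ramos for concave toric domains in $\RR^4$, which is cited as a theorem from \cite{choi}. Since it is an external input we are entitled to invoke rather than reprove, the only task here is to recall the construction of the \emph{ordered weight expansion} $\vec{w}(\Omega)$ and to indicate why formula \eqref{choi} takes the stated shape; the heavy ECH computation (the isomorphism between ECH of a concave toric domain and that of a disjoint union of balls, via toric model and weight decomposition) is quoted wholesale. First I would define the weight expansion recursively: given the concave region $\Omega \subset \RR^2_{\geq 0}$ with $X_\Omega$ concave, let $w_1 = w_1(\Omega)$ be the largest $a$ such that the triangle $T(a) = \{x,y \geq 0,\ x+y \leq a\}$ fits inside $\Omega$ (equivalently, $X_{T(a)} = B^4(a)$ embeds as a toric subdomain). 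Removing $T(w_1)$ from $\Omega$ leaves (up to affine equivalence by elements of $SL(2,\ZZ)$ acting on the two ``corner'' regions) two smaller concave toric regions, to which one applies the same procedure, concatenating the resulting multisets. This produces $\vec{w}(\Omega) = \{w_1, w_2, w_3, \dots\}$, a possibly infinite nonincreasing sequence with $\sum w_i^2 = \tfrac{1}{\pi}\cdot 2\,\mathrm{area}(\Omega)$ — i.e., the volume is recovered.

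\textbf{Key steps in order.} The second step is to recall the known identity $c_k^{\mathrm{ECH}}(X_\Omega) = c_k^{\mathrm{ECH}}\!\left(\bigsqcup_i B^4(w_i)\right)$ for concave toric domains, which is exactly the content of the main theorem of \cite{choi}: ECH capacities are invariant under the weight decomposition because the ball-packing on the toric side corresponds on the ECH side to a direct-sum decomposition of the chain complex. Third, one invokes Hutchings's formula for the ECH capacities of a disjoint union of balls: $c_k^{\mathrm{ECH}}\!\left(\bigsqcup_i B^4(a_i)\right) = \max\{\sum_i d_i a_i \mid \sum_i \tfrac{d_i(d_i+1)}{2} \leq k,\ d_i \in \{0\}\cup\NN\}$, which itself follows from $c_k^{\mathrm{ECH}}(B^4(a)) = a\cdot\inf\{m \mid m(m+3) \geq 2k\}$ together with the disjoint-union (direct sum) axiom for ECH capacities. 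Combining the second and third steps with $a_i = w_i$ and truncating the sum at $i = k$ (legitimate because any admissible $(d_i)$ with $\sum \tfrac{d_i^2+d_i}{2} \leq k$ has at most $k$ nonzero entries) yields precisely \eqref{choi}.

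\textbf{Main obstacle.} Because the statement is quoted verbatim from \cite{choi}, there is no genuine mathematical obstacle internal to this paper; the ``hard part'' lives entirely in \cite{choi} and consists of the ECH cobordism and Seiberg--Witten input needed to prove that the weight decomposition is an isomorphism on the level of ECH, together with the index/grading bookkeeping that pins down $c_k^{\mathrm{ECH}}(B^4(a))$. The only thing requiring care on our side is bookkeeping: making sure the recursive definition of $\vec{w}(\Omega)$ is stated unambiguously (choice of the two subregions after removing the maximal triangle, and the $SL(2,\ZZ)$-normalization that makes them concave toric again), and noting the truncation remark so that the displayed finite maximum over $i = 1,\dots,k$ is literally what the infinite weight sequence produces. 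I would therefore present this step as a \emph{recollection} — stating the definition of the weight expansion, citing \cite{choi} for the reduction to balls, citing \cite{ech} for the ball formula, and remarking on the truncation — rather than as a proof to be carried out.
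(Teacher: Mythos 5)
Your proposal is correct and takes essentially the same approach as the paper: the theorem is an external input quoted from \cite{choi}, and the only work done on the paper's side is to recall the recursive construction of the ordered weight expansion via concave subdivision (maximal triangle $T(\tau(\Omega))$, removal, translation and $SL(2,\ZZ)$ normalization of the two corner pieces), which is exactly what you describe. Your additional elaboration — reduction to a disjoint union of balls, the ball formula $c_k^{\mathrm{ECH}}(B^4(a)) = a\cdot\min\{d : d(d+3)\geq 2k\}$, and the truncation-to-$k$-weights remark — is accurate context that the paper omits but that a reader would find helpful. One small normalization slip: with the moment map $\mu(z)=(\pi|z_1|^2,\pi|z_2|^2)$ as in the paper, the symplectic volume of $X_\Omega$ equals the Euclidean area of $\Omega$ and $\mathrm{volume}(B^4(a))=a^2/2$, so the identity should read $\sum w_i^2 = 2\,\mathrm{area}(\Omega)$ with no $\tfrac{1}{\pi}$ factor; this is an aside and does not affect the argument.
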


To make use of this we must also recall the definition of the the ordered weight expansion $\vec{w}(\Omega)$ of a concave domain $\Omega$. We begin with the basic procedure of {\em concave subdivision} which derives three smaller (possibly empty) concave domains from $\Omega$. We denote  these by $T(\Omega)$, $\Omega_1$ and $\Omega_2$. 

Let $T(c)$ be the triangle in $ \RR^2_{\geq 0}$ with vertices 
$$
\left\{(0,0), (c,0), (0,c)\right\}.
$$
Setting $\tau(\Omega) = \max \{c \mid T(c) \subset \Omega \}, $ we define $$T(\Omega)=T(\tau(\Omega)).$$
Now $\Omega \smallsetminus T(\Omega)$ as the disjoint union of two, possibly empty, subsets whose closures we denote by $\tilde{\Omega}_1$ and  $\tilde{\Omega}_2$, If either of these sets is nonempty, then the labels are chosen so that  $\tilde{\Omega}_1$ is disjoint from the vertical axis and $\tilde{\Omega}_2$ is disjoint from the horizontal axis. If both these sets are nonempty their intersection is the point $(\tau(\Omega),\tau(\Omega))$. If $\tilde{\Omega}_1$ is nonempty then it has a unique obtuse angle and we define $\Omega_1$ to be the concave domain obtained by translating $\tilde{\Omega}_1$ by $(-\tau(\Omega),0)$, and applying the transformation $$\left(\begin{array}{cc}1 & 1 \\0 & 1\end{array}\right).$$
Similarly, if $\tilde{\Omega}_2$ is nonempty, then it has a unique obtuse angle and we define $\Omega_2$ be the concave domain obtained by translating $\tilde{\Omega}_2$ by $(0,-\tau(\Omega))$, and applying the transformation $$\left(\begin{array}{cc}1 & 0 \\1 & 1\end{array}\right).$$ If $\tilde{\Omega}_j = \emptyset$, we set $\Omega_j = \emptyset$.

Applying concave subdivision to $\Omega_j$ we get concave domains $T(\Omega_j)$, $\Omega_{j1}$, and $\Omega_{j2}$. Continuing in this manner we get the collection of convex domains $\Omega_{j_1\dots j_d}$  where $j_i \in \{1,2\}$ and $d \in \NN$. The weight expansion of $\Omega$ is the, possibly finite, multiset
$$
w(\Omega) = \{ \tau(\Omega)\} \cup \{ \tau(\Omega_{j_1\dots j_d}) \mid  j_i \in \{1,2\},\,  d \in \NN,\, \Omega_{j_1\dots j_d} \neq \emptyset\}.
$$
Ordering this multiset with repetitions, we get the ordered weight expansion of $\Omega$ 
$$\vec{w}(\Omega) =\{w_1, w_2, w_3 \dots \}$$ with $$w_1 \geq w_2 \geq \dots.$$

\begin{lemma}\label{hhh}
Suppose that $h$ is in $\widehat{\mathcal{C}}(\lambda)$ and that $h'(0)<-4$.  For the concave domain $\Omega$ bounded by the axes and the graph of $h$, we have 
\begin{eqnarray*}
\tau(\Omega) &=& y_0+ h(y_0) \\
\tau(\Omega_2)=\tau(\Omega_1) &=& 2y_2+  h(y_2) -\tau(\Omega) \\
\tau(\Omega_{22})=\tau(\Omega_{11}) &=& 3y_{22}+ h(y_{22}) -\tau(\Omega)-\tau(\Omega_2) \\
\tau(\Omega_{21})=\tau(\Omega_{12}) &=& 3y_{21}+  2h(y_{21}) -2\tau(\Omega)-\tau(\Omega_2) \\
\tau(\Omega_{222})=\tau(\Omega_{111}) &=& 4y_{222}+  h(y_{222}) -\tau(\Omega)-\tau(\Omega_2) -\tau(\Omega_{22})\\
\tau(\Omega_{221})=\tau(\Omega_{112}) &=& 5y_{221}+ 2 h(y_{221}) -2\tau(\Omega)-2\tau(\Omega_2) -\tau(\Omega_{22})\\
\tau(\Omega_{212})=\tau(\Omega_{121}) &=& 5y_{212}+ 3 h(y_{212}) -3\tau(\Omega)-2\tau(\Omega_2) -\tau(\Omega_{21})\\
\tau(\Omega_{211})=\tau(\Omega_{122}) &=& 4y_{211}+ 3 h(y_{211}) -3\tau(\Omega)-\tau(\Omega_2) -\tau(\Omega_{21})
\end{eqnarray*}
where the $y$ values are determined uniquely by the conditions 
\begin{eqnarray*}
h'(y_0) & = & -1 \\
h'(y_2) & = & -2 \\
h'(y_{22}) & = &  -3 \\
h'(y_{21}) & = &  -\frac{3}{2} \\
h'(y_{222}) & = &  -4 \\
h'(y_{221}) & = &  -\frac{5}{2} \\
h'(y_{212}) & = &  -\frac{5}{3} \\
h'(y_{211}) & = &  -\frac{4}{3} 
\end{eqnarray*}
\end{lemma}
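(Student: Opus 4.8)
The plan is to unwind the definition of concave subdivision, keeping track at each of the (at most three) steps of the affine map that produces the smaller domain, and to use two elementary observations: first, an invertible affine map carries the graph of a strictly convex function to the graph of a strictly convex function and transforms the slope of a tangent line by a real Möbius transformation; second, if $\Omega'$ is a concave domain whose curved boundary arc is strictly convex and contains a point where the tangent line has slope $-1$, then that point is unique and $\tau(\Omega')$ equals the sum of its two coordinates — because the line $x+y=c$ first leaves $\Omega'$ by becoming tangent to the curved arc, all pertinent vertex constraints being slack (this last being a consequence of $h''>0$, see below). Since $\Omega=\Omega_\emptyset$ has curved boundary the strictly convex graph of $h$, every domain obtained by iterated subdivision inherits this structure, so the second observation reduces the computation of each $\tau(\Omega_\sigma)$ to locating a tangent point and evaluating a linear functional there.

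For the slope bookkeeping, set $M_1=\begin{pmatrix}1&1\\0&1\end{pmatrix}$ and $M_2=\begin{pmatrix}1&0\\1&1\end{pmatrix}$, and note that on the slope of a boundary arc $M_1$ acts by $S_1(m)=m/(1+m)$ and $M_2$ by $S_2(m)=m+1$, while the translations in the subdivision leave slopes unchanged. Hence for a word $\sigma=j_1\cdots j_d\in\{1,2\}^d$ the curved boundary of $\Omega_\sigma$ is the image of a sub-arc of the graph of $h$ under an affine map $\Phi_\sigma$ whose linear part is $M_{j_d}\cdots M_{j_1}$ and whose translation part is an explicit nonnegative-integer combination of $\tau(\Omega),\tau(\Omega_{j_1}),\dots$; moreover the slope of that image arc at the image of $(x,h(x))$ is $(S_{j_d}\circ\cdots\circ S_{j_1})(h'(x))$. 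By the tangency observation, $\tau(\Omega_\sigma)$ is carried by $\Phi_\sigma(y_\sigma,h(y_\sigma))$, where $y_\sigma$ is the unique (by $h''>0$) solution of $h'(y_\sigma)=(S_{j_1}^{-1}\circ\cdots\circ S_{j_d}^{-1})(-1)$, with $S_1^{-1}(m)=m/(1-m)$ and $S_2^{-1}(m)=m-1$. Evaluating this for the words $\emptyset,2,22,21,222,221,212,211$ gives the slopes $-1,-2,-3,-\tfrac32,-4,-\tfrac52,-\tfrac53,-\tfrac43$, which are exactly the eight conditions in the statement; and the corresponding $y_\sigma$ all lie in $(0,\lambda)$ precisely when $h'$ attains the smallest of these, $-4$, on $(0,\lambda)$, which by strict monotonicity of $h'$ and the relation $h'(\lambda)=1/h'(0)\in(-\tfrac14,0)$ (itself forced by $h^{-1}=h$) is equivalent to the hypothesis $h'(0)<-4$.

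It then remains to compute $\tau(\Omega_\sigma)=\langle(1,1),\Phi_\sigma(y_\sigma,h(y_\sigma))\rangle$ by expanding $\Phi_\sigma$. Writing each step as $(u,v)\mapsto M_1\!\left((u,v)-(\tau,0)\right)$ or $(u,v)\mapsto M_2\!\left((u,v)-(0,\tau)\right)$, with $\tau$ the $\tau$-value of the domain being subdivided, a short induction on $|\sigma|$ shows that along the curve both coordinates of $\Phi_\sigma(x,h(x))$ have the shape $(\text{nonneg.\ integer})\,x+(\text{nonneg.\ integer})\,h(x)+(\text{integer combination of the earlier }\tau\text{'s})$; carrying this out for the eight words yields precisely the displayed formulas for $\tau(\Omega),\tau(\Omega_2),\tau(\Omega_{22}),\tau(\Omega_{21}),\tau(\Omega_{222}),\tau(\Omega_{221}),\tau(\Omega_{212}),\tau(\Omega_{211})$. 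That the extremal vertex constraints are slack (so the maximum is realized by tangency, not a corner) follows at each stage from strict convexity: for example the vertical-axis vertex of $\Omega_{222}$ sits at height $h(0)-3y_{22}-h(y_{22})$, which is positive because the tangent line to the graph at $y_{22}$ (slope $-3$) lies strictly below the graph, giving $h(0)>h(y_{22})+3y_{22}$, and the other branches are dispatched the same way. Finally, the equalities $\tau(\Omega_{1\cdots})=\tau(\Omega_{2\cdots})$ (with every $1$ and $2$ interchanged) follow from the reflection $R(x,y)=(y,x)$: since $h^{-1}=h$, $R$ fixes $\Omega$, swaps $\tilde\Omega_1\leftrightarrow\tilde\Omega_2$, and intertwines the two per-step maps ($R\circ\Phi_1=\Phi_2\circ R$), hence carries the whole subdivision tree to itself with the labels exchanged.

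The conceptual input is light, so the real work — and the main obstacle — is the geometric groundwork underlying the second observation above: one must check that each lobe $\tilde\Omega_\sigma$ with $|\sigma|\le3$ is nonempty, that its unique obtuse vertex is precisely the one on which the prescribed translation is centered, and that its curved boundary is exactly the sub-arc of the graph of $h$ picked out by the slope conditions — and, linked to this, that the nonemptiness and "tangency-not-vertex" requirements on all eight branches collapse to the single inequality $h'(0)<-4$. Once that is in place, the remaining matrix-and-translation computation is routine, and is best organized as the induction on $|\sigma|$ above rather than as eight separate calculations.
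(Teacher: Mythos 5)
Your proof is correct and follows the same route the paper indicates — unwinding the concave subdivision, using symmetry for the left-hand equalities and $h'(0)<-4$ for existence of the tangency points — the paper merely states this as a sketch ("the rest of the formulas follow easily from the concave subdivision process") while you supply the bookkeeping (the slope maps $S_1,S_2$, the affine maps $\Phi_\sigma$, and the slackness of the corner constraints) explicitly.
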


\begin{proof}
The leftmost set of equalities all follow from the symmetry of $h$. The condition
$h'(0)<-4$ implies that all the points $y_1, \dots, y_{211}$ exist. The rest of the formulas follow easily from the concave subdivision process described above.
\end{proof}

Let  $\rho \colon [0,\lambda] \to \RR_{\geq 0}$ be a smooth function such that  
\begin{itemize}
  \item[(i)] the support of $\rho$ is contained in an arbitrarily small interval around $y_{22}$.
  \item[(ii)] $\rho =1$ in a smaller neighborhood of $y_{22}$.
  \item[(iii)] $\int_0^1 \rho =0.$
\end{itemize}
Choose   $\delta_0>0$ small enough so that the function $$h^{\delta}  =h+\delta( \rho + \tilde{\rho})$$ is in $\widehat{\mathcal{C}}(\lambda)$ for all $\delta \leq \delta_0$. We will refer to process of moving from $h$ to $h^{\delta}$ as the $\delta$-shift. Let $\Omega^{\delta}$ be the region bounded by the axes and the graph of $h^{\delta}$ and set $X_{h^{\delta}} = \mu^{-1}(\Omega^{\delta}).$ Note  that $y_0 =x(h)$ and that $y_2= \check{x}_2$. It then follows from Lemma \ref{ready} that 
\begin{equation*}
\label{ }
c_k(X_{h^{\delta}}) = c_k(X_h)  \text{ for all  $k \in \NN$  and $\delta \in [0,\delta_0]$},
\end{equation*}
and 
\begin{equation*}
\label{ }
\mathrm{volume}(X_{h^{\delta}}) = \mathrm{volume}(X_h)  \text{ for all  $\delta \in [0,\delta_0]$}.
\end{equation*}

By construction, we have the following formulas relating $\tau$-values before and  after the $\delta$-shift:
\begin{equation}\label{delta}
\begin{aligned}
\tau(\Omega^{\delta}) &=\tau(\Omega) \\
\tau(\Omega^{\delta}_2)=\tau(\Omega^{\delta}_1) &= \tau(\Omega_2) \\
\tau(\Omega^{\delta}_{22})=\tau(\Omega^{\delta}_{11}) &= \tau(\Omega_{22})+\delta\\
\tau(\Omega^{\delta}_{21})=\tau(\Omega^{\delta}_{12}) &= \tau(\Omega_{21}) \\
\tau(\Omega^{\delta}_{222})=\tau(\Omega^{\delta}_{111}) &= \tau(\Omega_{222}) -\delta \\
\tau(\Omega^{\delta}_{221})=\tau(\Omega^{\delta}_{112}) &= \tau(\Omega_{221})-\delta\\
\tau(\Omega^{\delta}_{212})=\tau(\Omega^{\delta}_{121}) &= \tau(\Omega_{212})\\
\tau(\Omega^{\delta}_{211})=\tau(\Omega^{\delta}_{122}) &= \tau(\Omega_{211})
\end{aligned}
\end{equation}

To say anything about the ordered weight expansion of $\Omega^{\delta}$ we now need to consider an explicit function $h$. We will define $h$ by parameterizing its graph. We start with the curve from Example \ref{lag},
\begin{equation*}
\label{ }
\alpha(t) =\left( 2 \sin \left(\frac{t}{2}\right)  -t \cos \left(\frac{t}{2}\right),   2 \sin \left(\frac{t}{2}\right)  + (2\pi -t) \cos \left(\frac{t}{2}\right)\right),\quad t \in [0,2\pi].
\end{equation*}
For a fixed, suitably small $\epsilon>0$, we then set $$\gamma(t) =\alpha(t)-(\epsilon, \epsilon) =(\gamma_1(t), \gamma_2(t))$$ where the domain of $\gamma$ is now $[\xi, 2\pi-\xi]$ for the number $\xi>0$ defined uniquely by the condition 
\begin{equation*}
\label{ }
2 \sin \left(\frac{\xi}{2}\right)  -\xi \cos \left(\frac{\xi}{2}\right) =\epsilon.
\end{equation*}

Let $h$ be the function defined by the image of $\gamma$. Then $h$ is in $\widehat{\mathcal{C}}(2-\epsilon)$.  By Lemma \ref{reg},  $X_{h}$ has a smooth boundary thanks to the shift by $\epsilon$.
A simple computation yields 
\begin{equation}
\label{t2y}
h'(\gamma_1(t)) = \frac{\gamma_2'(t)}{\gamma_1'(t)} = -\frac{2\pi -t}{t}
\end{equation}
and hence
\begin{equation*}
\label{ }
h'(0) =-\frac{2\pi -\xi}{\xi}.
\end{equation*}
This is finite and less than $-4$ for all sufficiently small $\epsilon>0$.

Let $\Omega$ be the domain defined by $h$.  It then follows from \eqref{t2y} and Lemma \ref{hhh} that:
\begin{eqnarray*}
\tau(\Omega) &=& 4-2\epsilon \\
\tau(\Omega_2)=\tau(\Omega_1) &=& 3\sqrt{3} -4 -\epsilon \approx 1.19615 -\epsilon\\
\tau(\Omega_{22})=\tau(\Omega_{11}) &=& 4\sqrt{2} - 3\sqrt{3} -\epsilon \approx .46070-\epsilon\\
\tau(\Omega_{21})=\tau(\Omega_{12}) &=& 10 \sin\left(\frac{3\pi}{5}\right) -3\sqrt{3} - 4 \approx .31441\\
\tau(\Omega_{222})=\tau(\Omega_{111}) &=& 10 \sin\left(\frac{\pi}{5}\right) -4\sqrt{2}-\epsilon\approx.22010-\epsilon\\
\tau(\Omega_{221})=\tau(\Omega_{112}) &=& 14 \sin\left(\frac{2\pi}{7}\right)  -3\sqrt{3} -4\sqrt{2}\approx.09263\\
\tau(\Omega_{212})=\tau(\Omega_{121}) &=& 16 \sin\left(\frac{3\pi}{8}\right)  - 3\sqrt{3} -10 \sin\left(\frac{3\pi}{5}\right) +\epsilon \approx.07535 +\epsilon\\
\tau(\Omega_{211})=\tau(\Omega_{122}) &=& 14 \sin\left(\frac{3\pi}{7}\right) -4 - 10 \sin\left(\frac{3\pi}{5}\right) +\epsilon \approx 0.13843+\epsilon
\end{eqnarray*}

For all sufficiently small $\epsilon>0$ we then have
\begin{eqnarray*}
\vec{w}(\Omega) & = & \{w_1, w_2, \dots\} \\
{} & = & \{ \tau(\Omega), \tau(\Omega_2), \tau(\Omega_1),\tau(\Omega_{22}), \tau(\Omega_{11}), \tau(\Omega_{21}),\tau(\Omega_{12}), \dots\}. 
\end{eqnarray*}
Similarly, for all sufficiently small $\epsilon>0$ and $\delta>0$ we have
\begin{eqnarray*}
\vec{w}(\Omega^{\delta}) & = & \{ w^{\delta}_1, w^{\delta}_2, \dots\}\\
{} & = & \{ \tau(\Omega^{\delta}), \tau(\Omega^{\delta}_2), \tau(\Omega^{\delta}_1),\tau(\Omega^{\delta}_{22}), \tau(\Omega^{\delta}_{11}), \tau(\Omega^{\delta}_{21}),\tau(\Omega^{\delta}_{12}), \dots\}\\
{} & = & \{ \tau(\Omega), \tau(\Omega_2), \tau(\Omega_1),\tau(\Omega_{22})+\delta, \tau(\Omega_{11}) +\delta, \tau(\Omega_{21}),\tau(\Omega_{12}), \dots\}
 \end{eqnarray*}

To complete the proof of Theorem \ref{blind} it suffices to prove the following.

\begin{lemma}\label{capcomp}
For all sufficiently small $\epsilon>0$ and $\delta>0$ we have
\begin{equation}
\label{comp}
c^{\mathrm{ECH}}_9(X_{h+\delta (\rho+ \tilde{\rho})}) =c^{\mathrm{ECH}}_9(X_{h})+\delta.
\end{equation}
\end{lemma}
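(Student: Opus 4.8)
The plan is to compute $c^{\mathrm{ECH}}_9$ directly from the Choi--Cristofaro-Gardiner--Frenkel--Hutchings--Ramos formula \eqref{choi} applied to the ordered weight expansions $\vec{w}(\Omega)$ and $\vec{w}(\Omega^\delta)$ listed above. For $k=9$ the optimization in \eqref{choi} runs over nonnegative integer vectors $(d_1,\dots,d_9)$ with $\sum_i \tfrac{d_i^2+d_i}{2}\le 9$, and since the weights $w_i$ are listed in nonincreasing order it is cheap to enumerate the finitely many ``efficient'' multiplicity vectors: the triangular-number constraint forces each $d_i\le 3$ (as $\binom{4+1}{2}=10>9$), and among vectors with a given support the maximum of $\sum d_i w_i$ is always attained by loading the largest admissible multiplicities onto the earliest (largest) weights. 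So the first step is to write down the short list of candidate vectors $(d_1,\dots,d_9)$ that can conceivably be optimal --- e.g. $(3,0,\dots)$ uses budget $6$, leaving room for a few more $1$'s or a $2$; $(2,2,\dots)$ uses budget $6$; $(2,1,1,1,1,1,1,0,0)$ uses budget $3+1\cdot 6=9$; $(1,1,1,1,1,1,1,1,1)$ uses budget $9$; and so on --- and evaluate $\sum d_i w_i$ for each using the explicit numerical values of the $\tau$'s.

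The second step is to identify, from that finite comparison, the unique maximizing vector $d^*$ for $\vec{w}(\Omega)$, and to check that it is \emph{robust}: because the numerical gaps between the values $\sum d_i w_i$ for distinct candidate vectors are bounded below by an explicit positive constant (one reads this off the approximate numerical values of $\tau(\Omega),\dots,\tau(\Omega_{211})$, which are all well-separated for small $\epsilon$), the same vector $d^*$ remains the maximizer after the $\delta$-shift for all sufficiently small $\delta>0$. The third step is then purely bookkeeping: compare $\vec{w}(\Omega)$ with $\vec{w}(\Omega^\delta)$ entry by entry using \eqref{delta}. The shift changes exactly $w_4=\tau(\Omega_{22})$ and $w_5=\tau(\Omega_{11})$ by $+\delta$ (and later entries $\tau(\Omega_{222}),\tau(\Omega_{221})$ by $-\delta$, $\tau(\Omega_{212}),\tau(\Omega_{211})$ by $+\delta$, but these sit beyond position $9$ or are irrelevant to $d^*$ --- this needs checking), while $w_1,w_2,w_3$ are untouched. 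One then verifies that $d^*$ puts multiplicity exactly $1$ on \emph{one} of the two positions $4,5$ and multiplicity $0$ on the other, or multiplicity $1$ on both and $0$ elsewhere among shifted slots, in such a way that $\sum_i d^*_i w^\delta_i = \sum_i d^*_i w_i + \delta$; combined with robustness of $d^*$ this gives $c^{\mathrm{ECH}}_9(X_{h^\delta}) = c^{\mathrm{ECH}}_9(X_h)+\delta$, which is \eqref{comp}.

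I expect the main obstacle to be the second step: \textbf{proving that the maximizer $d^*$ is genuinely unique and stable under the perturbation}, rather than merely plausible from the decimal approximations. One must either (a) keep enough digits and track the $\epsilon$-dependent corrections carefully to certify each strict inequality $\sum d^*_i w_i > \sum d_i w_i$ over the candidate list, or (b) exploit structural monotonicity --- the fact that $\vec{w}(\Omega^\delta)$ dominates $\vec{w}(\Omega)$ componentwise in positions $1$--$5$ while the tail changes are $O(\delta)$ and do not enter an optimal length-$9$ selection --- to argue that the optimal value can increase by at most the total positive mass added to the first nine slots and by at least $\delta$ (by exhibiting $d^*$ as an explicit competitor for $\Omega^\delta$), squeezing the answer. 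Route (b) is cleaner and is what I would write up, reducing the lemma to: (i) computing $c^{\mathrm{ECH}}_9(X_h)$ once and recording its optimal vector $d^*$; (ii) checking $d^*$ meets every shifted weight among $w_1,\dots,w_9$ with the claimed multiplicities; (iii) a short upper-bound argument that no competitor can beat $\sum_i d^*_i w^\delta_i$ for small $\delta$, using the fixed numerical separation of the weights.
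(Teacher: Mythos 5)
Your proposal is correct and takes essentially the same approach as the paper: apply \eqref{choi} with $k=9$, enumerate the admissible $d$-vectors, identify the maximizer, and track how the $\delta$-shift moves the relevant weight. The paper handles the ``main obstacle'' you flag by directly noting that the only two serious competitors are $d=(3,2,0,\dots)$ and $d=(3,1,1,1,0,\dots)$ (both exhaust the budget of $9$, and $w_1\approx 4$ dominates so $d_1=3$ is forced), and then uses the structural identity $w_2=w_3$ together with $w_4>0$ to see that $3w_1+w_2+w_3+w_4$ wins both before and after the shift; since only $w_4=\tau(\Omega_{22})$ among $w_1,\dots,w_4$ moves (by exactly $+\delta$), the capacity increases by exactly $\delta$.
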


\begin{proof}
By equation \eqref{choi} we have
\begin{equation*}
\label{9}
c^{\mathrm{ECH}}_9(X_{h})= \max\left\{  \sum_{i=1}^k d_i w_i \,\middle|\, \sum_{i=1}^k \frac{d_i^2+d_i}{2} \leq 9, \, d_i \in \{0\} \cup \NN \right\}.
\end{equation*}
The two largest terms in the set 
$$
\left\{  \sum_{i=1}^k d_i w_i \,\middle|\, \sum_{i=1}^k \frac{d_i^2+d_i}{2} \leq 9, \, d_i \in \{0\} \cup \NN \right\}
$$
are $3w_1+2w_2$ and $3w_1 +w_2 +w_3 +w_4.$ For us, $w_2=w_3$ and $w_4>0$ and so 
\begin{eqnarray*}
c^{\mathrm{ECH}}_9(X_{h}) & = & 3w_1 +w_2 +w_3 +w_4 \\
 {} & = & 3\tau(\Omega) +\tau(\Omega_2)+\tau(\Omega_1)+\tau(\Omega_{22}). 
\end{eqnarray*}
An identical argument then yields
\begin{eqnarray*}
c^{\mathrm{ECH}}_9(X_{h+\delta (\rho+ \tilde{\rho})}) & = & 3\tau(\Omega^{\delta}) +\tau(\Omega^{\delta}_2)+\tau(\Omega^{\delta}_1)+\tau(\Omega^{\delta}_{22}) \\
{} & = & 3\tau(\Omega) +\tau(\Omega_2)+\tau(\Omega_1)+\tau(\Omega_{22}) +\delta \\
{} & = &  c^{\mathrm{ECH}}_9(X_{h}) + \delta,
\end{eqnarray*}
as required.
\end{proof}

\section{Further Questions}\label{open}

Here we discuss some unresolved questions motivated by Theorems  \ref{novolume}, \ref{mutual} and \ref{blind} and their proofs, and describe some relevant examples.

\subsection{On varying capacities one at a time.} 
The proof of Theorem \ref{mutual} requires the consideration of both convex and concave toric domains to allow for the independent variation of both even and odd index capacities.

\begin{question}
Does there exist a star-shaped domain $U \subset \RR^{2n}$, such that for any integer $j$, there is star-shaped domain $V_j \subset \RR^{2n}$ with
\begin{equation*}
\label{ }
c_k(V_j) =c_k(U)  \,\,\,\,\text{  for all  } k\neq j
\end{equation*}
and 
\begin{equation*}
\label{ }
c_j(V_j) \neq c_j(U)?
\end{equation*}
\end{question}

It is also not clear from the proof of Theorem \ref{mutual} whether a fixed capacity can be independently varied through the full range of its possible values while keeping the other capacities fixed.

\begin{question}
Does there exist a star-shaped domain $U \subset \RR^{2n}$ with $c_{j-1}(U) < c_{j+1}(U)$ for some $j\geq 2$, such that for each $a \in (c_{j-1}(U), c_{j+1}(U))$ there is star-shaped domain $V_a$ with
\begin{equation*}
\label{ }
c_k(V_a) =c_k(U)  \,\,\,\,\text{  for all  } k\neq j
\end{equation*}
and 
\begin{equation*}
\label{ }
c_j(V_a) = a?
\end{equation*}
\end{question}

\subsection{Isocapacity variations of volume}

Let $U$ is a star-shaped domain in  $\RR^{2n}$. Define the \emph{isocapacity volume ratio} of $U$ to be 
$$\mathrm{IVR}(U) = \sup \frac{\mathrm{volume}(V)}{\mathrm{volume}(W)}$$
where the supremum is taken over all star-shaped domains $V, W \subset \RR^{2n}$ such that $c_k(V) =c_k(W) = c_k(U)$ for all $k \in \NN$.

\begin{example}
For any star-shaped domain $U \subset \RR^{2}$  we have $\mathrm{IVR}(U) =1$
\end{example}

\begin{example}
For any symplectic polydisk $P \subset \RR^{2n}$ with $n>1$ we have $\mathrm{IVR}(P) =\infty$.
\end{example}

With these simple cases in mind we restrict our attention again to the case of star-shaped domains, with smooth boundaries, in $\RR^{2n}$ for $n>1$. The proof of Theorem \ref{novolume} provides a mechanism, in the strictly convex toric setting,  for varying volumes while keeping capacities fixed. This mechanism fails in the case of symplectic ellipsoids, since the usable gaps between capacity carriers vanish as one relaxes strict convexity. This observation motivates the following.

\begin{question}
Is the isocapacity volume ratio of any symplectic ellipsoid equal to $1$?
\end{question}

The general question concerning isocapacity volume ratios is the following.

\begin{question}
Is the isocapacity volume ratio of every star-shaped domain $U$ in $\RR^{2n}$ with smooth boundary finite? If so, is there some universal upper bound, dependent on the dimension, for the isocapacity volume ratio of all star-shaped domains in $\RR^{2n}$ with smooth boundary?
\end{question}

\begin{example}[Convex graphs for $n=2$.]
\label{circle}
Let $f$ be a function in $\widehat{\mathcal{V}}(\lambda)$ such that $f'(0) \in (-1/2,0)$.
Here, we use Corollary \ref{f} to obtain a lower bound for $\mathrm{IVR}(X_f)$. It is not clear whether this bound can be improved.

We first construct a function $\underline{f} \colon [0,\lambda] \to \RR$ whose graph lies below that of $f$, such that the domain $X_{\underline{f}}$ has the same capacities as $X_f$.  Define $\underline{f} $ to be the piecewise linear function obtained by first connecting $(x_{k-1}, f(x_{k-1}))$ to $(x_{k},f(x_k)))$ for all odd $k \in \NN$, where $x_0 = \lambda$, and then extending this to $[x(f),\lambda]$ as a symmetric function. 
Note that for any $\underline{\epsilon}>0$ there is a function $f_{\underline{\epsilon}}$ in $\widehat{\mathcal{V}}(\lambda)$ such that $\|\underline{f} -f_{\underline{\epsilon}}\|_{C^0} < \underline{\epsilon}$,
\begin{equation*}
\label{ }
f_{\underline{\epsilon}}(x_k) = f(x_k),
\end{equation*}
and
\begin{equation*}
\label{ }
f_{\underline{\epsilon}}'(x_k) = -\frac{k-1}{k+1}
\end{equation*}
for all odd $k \in \NN$. It follows from this, and the continuity of the $c_k$, that 
$$c_k(X_{\underline{f}}) = c_k(X_f)$$ for all $k \in \NN$.  

Next we construct a function  $\bar{f} \colon [0,\lambda] \to \RR$ whose graph lies above that of $f$ such that the domain $X_{\bar{f}}$ has the same capacities as $X_f$. For odd $k$, let $L_k$ be the tangent line to the graph of $f$ at $(x_k, f(x_k))$. Let $L_0$ be the line through $(0,\lambda)$ with slope $f'(0)$. Denote the intersection point of $L_{k-1}$ and $L_k$ by $p_k$. Let $\bar{f} \colon [0,\lambda] \to \RR$ be the piecewise linear function obtained by first connecting $p_{k-1}$ to $p_{k}$ for all odd $k \in \NN$,  and then extending this to $[x(f),\lambda]$ as a symmetric function. 
Arguing as above,  we have $$c_k(X_{\bar{f}}) = c_k(X_f)$$ for all $k \in \NN.$ 

By construction, we have $\underline{f} \leq f \leq \bar{f}$  and hence  $X_{\underline{f}} \subset X_f \subset X_{\bar{f}}$. From this it follows that  
\begin{equation}
\label{updown}
\mathrm{IVR}(X_f) \geq \frac{\mathrm{volume}(X_{\bar{f}})}{\mathrm{volume}(X_{\underline{f}})}.
\end{equation}
Since both $\bar{f}$ and $\underline{f}$ are piecewise linear, the volumes on the right are easily computable. For example, for the function $f_2(x) = \sqrt{1-x^2}$ from Example \ref{round},  it follows from \eqref{updown} that $\mathrm{IVR}(X_{f_2})$ is at least
\begin{equation*}
\label{ }
\frac{\sqrt{5}-2+\sum_{k=1}^{\infty}\left[\sqrt{k^2+(k+1)^2}\left( \sqrt{(k-1)^2+k^2}+\sqrt{(k+1)^2+(k+2)^2}  \right) -2 (k^2+(k+1)^2)\right]}{\sum_{k=1}^{\infty} (4k^4+1)^{-\frac{1}{2}}}
\end{equation*}
or, approximately, $1.0335$.

\begin{question}
Is inequality  \eqref{updown} really an equality?
\end{question}

\end{example}

\begin{example}[Toric polytopes in $\RR^4$]
Here we use the capacity formula for symmetric toric polytopes from Proposition \ref{poly},  to find simple lower bounds for  the isocapacity volume ratios of the family of simple star-shaped domains in $\RR^4$ from Example \ref{diagonal}. For $r \in [2/3,1)$ let $\Omega_r$ be the convex hull of $\{(0,0), (1,0), (0,1), (r,r)\}$. For $a$, $b>0$  let $\Omega_{ab}^r$  be the convex hull of $$\{(0,0), (1,0), (0,1), (r,r), (a,b), (b,a)\}.$$ Set $X_r = \mu^{-1}(\Omega_r)$ and $X_{ab}^r = \mu^{-1}(\Omega_{ab}^r)$.

It is straight forward to check that the symmetric toric domain $X_{ab}^r$ is convex for all $(a,b)$ in the region 
\begin{equation*}
\label{ }
I_r=\{(a,b) \mid 0 \leq b \leq a \leq 1,\, a+b \leq 2r\}.
\end{equation*}

\begin{lemma}
The domains $X_r$ and $X_{ab}^r$ have the same capacities for all $(a,b)$ in  the region
\begin{equation*}
\label{ }
J_r = I_r \cap \{(a,b) \mid  2a+b \leq 3r\}.
\end{equation*}

\end{lemma}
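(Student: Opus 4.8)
The plan is to feed both domains into Proposition \ref{poly}. Both $X_r$ and $X_{ab}^r$ are symmetric convex toric domains over polytopes — symmetry is built into the vertex lists, and the convexity of $X_{ab}^r$ for $(a,b)\in I_r\supseteq J_r$ was already recorded — so Proposition \ref{poly} gives $c_k(X_r)=\max_p\langle V(k,2),p\rangle$ over $\{(0,0),(1,0),(0,1),(r,r)\}$, and, since a linear functional on a polytope is maximized at a vertex, $c_k(X_{ab}^r)=\max_p\langle V(k,2),p\rangle$ over the same four points together with $(a,b)$ and $(b,a)$. As $\Omega_{ab}^r\supseteq\Omega_r$ we get $c_k(X_{ab}^r)\ge c_k(X_r)$ for free, so the whole lemma reduces to showing that the two new vertices never beat the old maximum: $\langle V(k,2),(a,b)\rangle\le c_k(X_r)$ and $\langle V(k,2),(b,a)\rangle\le c_k(X_r)$ for every $k$.

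Next I would cut this down to a single family of inequalities. Writing $V(k,2)=(\lfloor k/2\rfloor,\lceil k/2\rceil)$, its entries are nondecreasing, and $a\ge b$ on $J_r$, so $\langle V(k,2),(b,a)\rangle-\langle V(k,2),(a,b)\rangle=(\lceil k/2\rceil-\lfloor k/2\rfloor)(a-b)\ge 0$; hence it suffices to bound $\langle V(k,2),(b,a)\rangle$. I would also record the values of $c_k(X_r)$ from Proposition \ref{poly}: since $r>1/2$, one has $c_k(X_r)=kr$ for even $k$; $c_1(X_r)=1$; and $c_k(X_r)=kr$ for odd $k\ge 3$, the latter because $r\ge 2/3$ gives $k(2r-1)\ge 3(2r-1)\ge 1$, so the diagonal vertex $(r,r)$ wins against $(0,1)$.

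The remaining content is three elementary checks. For even $k$, $\langle V(k,2),(b,a)\rangle=\tfrac{k}{2}(a+b)\le\tfrac{k}{2}\cdot 2r=kr=c_k(X_r)$, using $a+b\le 2r$ from $I_r$. For $k=1$, $\langle V(1,2),(b,a)\rangle=a\le 1=c_1(X_r)$, again from $I_r$. For odd $k\ge 3$ the inequality $\tfrac{k-1}{2}b+\tfrac{k+1}{2}a\le kr$ rearranges to $a-b\le k(2r-a-b)$; since $a+b\le 2r$ the right-hand side is nondecreasing in $k$, so it is enough to check $k=3$, and the $k=3$ instance is precisely $2a+b\le 3r$ — the defining inequality of $J_r$. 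Assembling the three cases gives the claim.

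I do not expect a genuine obstacle here: the only thing to be careful about is the bookkeeping of which vertex realizes $c_k(X_r)$ in each regime, and noticing the monotonicity-in-$k$ trick that collapses the infinitely many odd-index constraints to the single constraint $2a+b\le 3r$. One should also note the boundary case $r=2/3$, $k=3$, where $c_3(X_r)=3r$ holds with $(r,r)$ merely tying $(0,1)$; this is harmless but deserves a sentence for completeness.
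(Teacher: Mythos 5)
Your proof is correct and follows essentially the same approach as the paper's: both reduce via Proposition \ref{poly} to checking that the extra vertices never improve the maximum, and both observe that, given $a+b\le 2r$, the infinite family of odd-index constraints $(n+1)a+nb\le(2n+1)r$ is implied by the single case $n=1$, i.e.\ $2a+b\le 3r$. The paper's proof is far terser (two sentences); your version spells out the even/odd/$k{=}1$ bookkeeping and adds the helpful observation that $(b,a)$ dominates $(a,b)$, but there is no substantive difference in the argument.
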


\begin{proof}
It follows from Proposition \ref{poly}, and our choice of $r\geq 2/3$,  that for $(a,b)$ in $I_r$ we have  
\begin{equation}
\label{iso}
c_k(X_{ab}^r) = c_k(X_r)
\end{equation}
for all $k \in \NN$ if and only if
\begin{equation*}
\label{ }
(n+1)a + nb \leq (2n+1)r
\end{equation*}
for all $n \in \NN$. Given the condition $a+b \leq 2r$ from $I_r$, it is sufficient to require the first of these inequalities, 
\begin{equation*}
\label{ }
2a + b \leq 3r.
\end{equation*}
\end{proof}

The volume of $X_{ab}^r$ is equal to $2\pi(r(a-b)+b))$.  This function of $(a,b)$ takes its maximum value on $J_r$, $
2\pi(6r-3r^2-2),$ at the boundary point $(1,3r-2).$ 
Hence,
\begin{equation}
\label{ivr}
\mathrm{IVR}(X_r) \geq 3(2-r) -\frac{2}{r}.
\end{equation}
The maximum lower bound for the isocapacity volume ratios attained in this way is $6-2\sqrt{6} \approx 1.10102$ at $r=\sqrt{\frac{2}{3}}.$


\begin{remark}
For $r \in [1/2, 2/3]$ the lower bounds for $\mathrm{IVR}(X_r)$ are smaller and more difficult to derive since the conditions for the isocapacity property are more complicated. This is the reason that these allowable values of $r$ are disregarded in the discussion above.
\end{remark}

\begin{remark}
Adding more than two vertices to the original polytope $\Omega_r$ does not improve the lower bound in \eqref{ivr}. It would be interesting to know if inequality \eqref{ivr} is really an equality. If this were true then the function $r \mapsto \mathrm{IVR}(X_r)$ would approach $1$ as $r \to 1^-$, whereas $\mathrm{IVR}(X_1) = \mathrm{IVR}(P(1,1)) =\infty.$
\end{remark}

\begin{remark}
The authors have not found domains, including symmetric toric polytopes in higher dimensions, which have finite isocapacity volume ratios larger than $6-2\sqrt{6}$. 
\end{remark}
\end{example}

\end{document}